\documentclass[]{interact}
\usepackage{geometry}
\usepackage{epstopdf}
\usepackage{enumitem}   
\usepackage[caption=false]{subfig}
\usepackage{bm}
\usepackage{MnSymbol}
\usepackage{soul}
\usepackage{setspace}
\usepackage{tikz-cd}
\usepackage{tikz}
\usepackage{hyperref}
\usepackage[backend=biber,backref=true]{biblatex}
\usepackage{comment}

\theoremstyle{plain}
\newtheorem{theorem}{Theorem}[section]
\newtheorem{lemma}[theorem]{Lemma}
\newtheorem{corollary}[theorem]{Corollary}
\newtheorem{proposition}[theorem]{Proposition}

\theoremstyle{definition}
\newtheorem{definition}[theorem]{Definition}

\theoremstyle{remark}
\newtheorem{remark}{Remark}

\begin{document}


\title{\vspace{-0.5 cm}Mass and rigidity in almost K\"ahler geometry}
\author{
\name{Partha Ghosh}
}
\newcommand{\Addresses}{{
  \bigskip
  \footnotesize

  \textsc{Sorbonne Université, Université Paris Cité, CNRS, IMJ-PRG, F-75005 Paris, France}\par\nopagebreak
  \textit{E-mail address}: \texttt{Partha.Ghosh@imj-prg.fr}
}}
\maketitle
\begin{abstract}
    We derive an explicit formula for the ADM mass of asymptotically locally Euclidean (ALE) almost Kähler manifolds. The formula expresses the mass in terms of the total Hermitian scalar curvature and topological data associated with the underlying almost complex structure, extending a result of Hein and LeBrun \cite{HeinLebrun} in the K\"ahler ALE case. Our approach is based on a spin$^\mathbb {C}$ adaptation of Witten's proof of the positive mass conjecture in the spin case and is therefore distinct from previous complex-geometric methods. In dimension $4$, we prove a positive mass theorem and a Penrose-type inequality for asymptotically Euclidean (AE) almost Kähler manifolds. We also study rigidity phenomena of almost K\"ahler ALE manifolds. We prove that an almost K\"ahler-Einstein ALE manifold with nonnegative scalar curvature and certain decay assumptions is necessarily Kähler-Einstein. In particular, any four dimensional Ricci-flat almost Kähler manifold with maximal volume growth and curvature in $L^2$ is Kähler, yielding new evidence towards the Bando--Kasue--Nakajima conjecture \cite{BKN}. We also discuss analogous rigidity results for asymptotically locally flat (ALF) manifolds.
\end{abstract}
\tableofcontents
\section{Introduction}
\subsection{Background} Defining mass in general relativity is a very subtle issue. Roughly speaking, in Minkowski spacetime, we say that the energy content is given by an integral of the energy density of all the fields being considered over a constant time hypersurface. The problem is that, while this construction can still be carried out
for matter fields, it fails for the gravitational field. This is because the gravitational field is now described
by the curvature of spacetime and there is no well-defined local concept of energy density in that setting.\par
This is where the concept of ADM mass enters. Introduced by Arnowitt, Deser, and Misner \cite{ADM} in the early 1960s through a Hamiltonian formulation of general relativity, the ADM mass encapsulates all the properties of mass one would expect.\par 
A fundamental desired property of mass is its positivity. The Riemannian positive mass theorem states that any complete asymptotically flat (AF) Riemannian manifold with non-negative scalar curvature has non-negative ADM mass, with equality if and only if the manifold is isometric to Euclidean space. The first rigorous proof was established by Schoen and Yau \cite{ScYau1,ScYau2} in dimensions up to $7$, and independently by Witten \cite{Witten} in all dimensions for spin manifolds.\par
While the ADM mass is classically defined for AF manifolds, analogous notion of mass remains well defined under other curvature decay conditions, notably in the asymptotically locally Euclidean (ALE) setting. In the K\"ahler ALE case, Hein and LeBrun \cite{HeinLebrun} derived a transparent and beautiful mass formula expressing the ADM mass purely in terms of complex-geometric data, relating it to integrals of the scalar curvature and topological quantities. This article is devoted to derive a similar mass formula in the almost Kähler ALE case, which recovers Hein and LeBrun's formula in the K\"ahler case. The proof adapts Witten's proof of the positive mass conjecture for the spin case in the spin$^\mathbb{C}$ case (notice almost K\"ahler implies spin$^\mathbb{C}$) and hence different from Hein and LeBrun's approach. We also establish a positive mass theorem and a Penrose-type inequality for four-dimensional almost Kähler asymptotically Euclidean manifolds, following LeBrun’s \cite{LeBrun100} approach, which draws on results from four-dimensional symplectic topology. We also deduce that if the ALE metric is almost Kähler-Einstein with certain decay properties and nonnegative scalar curvature, then it is in fact Kähler-Einstein. Similar rigidity statements are also discussed for asymptotically locally flat (ALF) manifolds.
\subsection{Overview of the main results}
\begin{definition}\label{almost kahler}
    An almost-K\"ahler manifold is an almost-Hermitian manifold $(M,g,J)$ whose fundamental two form $\omega(u,v)=g(Ju, v)$ is closed.
\end{definition}
In the definition $g$ is the Riemannian metric, it preserves the almost complex structure $J$. In other words, an almost-K\"ahler manifold is a symplectic manifold equipped with a compatible metric.
\begin{definition}\label{ALE}
    An $n$-dimensional complete Riemannian manifold $(M,g)$ is said to be \textit{asymptotically locally Euclidean} (ALE) of order $\tau>0,$ if there exists a compact subset $K\subset M$ such that $M\setminus K$ has coordinates at infinity; namely there are $R>0,$ a finite group $\Gamma\subset O(n)$ acting freely on $\mathbb{R}^n\setminus B_R(0),$ and a $C^\infty$-diffeomorphism $\mathcal{X}:M\setminus K\rightarrow \big(\mathbb{R}^n\setminus\overline{B_R(0)}\big)/\Gamma $ such that $\phi=\mathcal{X}^{-1}  \pi$ satisfies ($\pi$ is the natural projection $\pi:\mathbb{R}^n\rightarrow \mathbb{R}^n/\Gamma$)
    \begin{align*}
        (\phi^*g)_{ij}(x)=\delta_{ij}+a_{ij}(x)\hspace{1 ex}\text{  for }x\in \mathbb{R}^n\setminus\overline{B_R(0)}
    \end{align*}
    where $|\partial ^p a_{ij}(x)|=O (|x|^{-\tau-p})$ for all $p\geq 0.$
\end{definition}
From now on we will write $r$ for $|x|$, where $x$ denote the coordinate at the \textit{infinity} $M_\infty:=M\setminus K$ as described in the definition above. We follow Hein and LeBrun's \cite{HeinLebrun} convention to define the ADM mass.
\begin{definition}\label{mass}
    Let $(M,g)$ be an $n$-dimensional ALE manifold of order $\tau>\frac{n-2}{2}$ and integrable scalar curvature. Then the ADM mass of $(M,g)$ is defined as 
    \begin{align}
        \mathfrak{m}(M,g):=\lim_{r\longrightarrow\infty}\frac{\mathbf{\Gamma}(\frac{n}{2})}{4(n-1)\pi^{\frac{n}{2}}}\int_{S_r/\Gamma}\big(\partial_i(\phi^*g)_{ij}(x)-\partial_j(\phi^*g)_{ii}(x)\big)\partial_j \righthalfcup dx
    \end{align}
\end{definition}
where $\phi$ is as in definition \ref{ALE}, $S_r$  is the Euclidean coordinate sphere of radius $r$. Bartnik \cite{Bartnik} and Chru{\'s}ciel \cite{Chruciel} independently discovered that the ADM mass is well defined, i.e., it is independent of the choice of coordinates at infinity for $\tau>\frac{n-2}{2}$.\par
For a smooth manifold $M$, one can define the compactly supported de Rham cohomology $H^k_c(M),$ as well as the usual de Rham cohomology $H^k(M)$. If $M$ is oriented, Poincar\'e duality gives us an isomorphism $H^2_c(M)\cong [H^{2m-2}(M)]^*$. On the other hand, there is a natural map $\iota: H^2_c(M)\rightarrow H^2(M)$ induced by the inclusion of compactly supported forms into all differential forms, and in the ALE setting, this map is actually
an {isomorphism}. Using this notation we now state our explicit formula for the mass.
\begin{theorem}\label{Mass formula}
    An almost K\"ahler ALE manifold of $(M,g,J)$ dimension $n=2m\geq 4$ and of order $\tau>\frac{n-2}{2}$ with $s,|\nabla^{\mathrm{LC}}J|^2\in L^1$ has the mass given by 
    \begin{align}
        \mathfrak{m}(M,g)=\frac{(m-1)!}{4(2m-1)\pi^m}\int_M\frac{s+s^*}{2}\, d\mathrm{v}_g-\frac{\big\langle \iota^{-1}c_1(M,J),[\omega]^{m-1}\big\rangle}{(2m-1)\pi^{m-1}}\label{mass formula}
    \end{align}
    where $s$ and $s^*$ are respectively the scalar curvature and the star-scalar curvature of $g$. $c_1(M,J)$ is the first Chern class of the complex structure and $\big\langle\,,\,\big\rangle$ is the duality pairing between $H^2_c(M)$ and $H^{2m-2}(M)$.
\end{theorem}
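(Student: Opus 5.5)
We follow the spin$^\mathbb{C}$ version of Witten's argument, using the canonical spin$^\mathbb{C}$ structure of $(M,J)$, whose spinor bundle is $\Lambda^{0,\bullet}M$ and whose determinant line bundle is $K_M^{-1}$. We twist the Dirac operator by a Hermitian connection $A$ on $K_M^{-1}$ and apply the Lichnerowicz--Weitzenb\"ock formula
\[
D_A^2=\nabla^*\nabla+\tfrac{s}{4}+\tfrac12 F_A\cdot .
\]
The connection will be induced from the Hermitian (Chern) connection of the almost K\"ahler structure. With this choice the curvature term, acting on the distinguished spinor $\Lambda^{0,0}$, has the form $\langle iF_A,\omega\rangle$, which in turn is expressed through $s$, $s^*$ and $|\nabla^{\mathrm{LC}}J|^2$. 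The almost K\"ahler identity $s^*-s=\tfrac12|\nabla^{\mathrm{LC}}J|^2$ (up to the paper's normalization) lets us rewrite $\tfrac{s}{4}+\tfrac12\langle F_A\rangle$ on constant-length spinors as a multiple of $\tfrac{s+s^*}{2}$. This explains the combination $(s+s^*)/2$ in the formula.

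\textbf{Step 1: integrated Weitzenb\"ock identity with boundary term.} For a spinor $\psi$ asymptotic to a constant spinor $\psi_0\in\Lambda^{0,0}$ at infinity, we integrate $\langle D_A^2\psi,\psi\rangle$ over $\{r\le R\}$. The boundary term as $R\to\infty$ equals, as in Witten and Bartnik, a constant times $\mathfrak m(M,g)|\psi_0|^2$. In addition there is a contribution from the connection $A$ at infinity. On the ALE end $\Gamma\subset U(m)$, the pure spinor $\psi_0$ is $\Gamma$-invariant, and $\psi_0$ is parallel for the flat model. We will therefore choose $A$ so that it agrees with a flat connection near infinity, up to $O(r^{-\tau-1})$.

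\textbf{Step 2: change the connection to a compactly supported curvature.} Because $A$ is not the Chern connection at infinity, we write $A=A_{\mathrm{Ch}}+a$, where $a$ is chosen so that $F_A$ has compact support. The difference $F_A-F_{A_{\mathrm{Ch}}}=da$ integrates against $\omega^{m-1}$ to zero, by Stokes and the decay $\tau>\frac{n-2}{2}$ (here $d\omega=0$ is used). Hence
\[
\int\langle iF_A,\omega\rangle\,\omega^m/m!=\int iF_A\wedge\omega^{m-1}/(m-1)!
\]
equals $2\pi\langle\iota^{-1}c_1,[\omega]^{m-1}\rangle/(m-1)!$, since $[iF_A/2\pi]$ is a compactly supported representative of $c_1$.

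\textbf{Step 3: solve $D_A\psi=0$ with $\psi\to\psi_0$.} This is done by Fredholm theory on weighted spaces. We then take $\psi=\psi_0 + \text{decaying}$; the cross terms are controlled because the decay rate exceeds $\frac{n-2}{2}$. Rather than aiming for positivity, we want an identity, so we avoid discarding $|\nabla\psi|^2$. To do this we choose $\psi$ to be the section $1\in\Lambda^{0,0}$ itself, with $A$ the connection making it parallel up to the $\bar\partial$-part. Then $\nabla_A 1$ is expressed via $\nabla J$ and is absorbed into the $|\nabla J|^2$ term. This turns the inequality into an exact formula, and collecting constants yields \eqref{mass formula}.

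\textbf{Main obstacle.} The hardest step will be the exact bookkeeping in Steps 1--3. One part is verifying that $D_A 1$ and $\nabla_A 1$ for the Chern-type connection produce exactly $|\nabla J|^2$ terms which combine with $s$ into $(s+s^*)/2$. The other part is showing that the boundary term at infinity contributes precisely the ADM mass and nothing else; this requires that $a$ and $\nabla J$ decay like $O(r^{-\tau-1})$, so that products are $o(r^{1-n})$. The integrability assumptions on $s$ and $|\nabla J|^2$ ensure all the bulk integrals converge.
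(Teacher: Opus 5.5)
\textbf{The gap is in Step 2, and it is where the content of the theorem lies.} You claim $\int_M i\,da\wedge\omega^{m-1}=0$ ``by Stokes and $\tau>\frac{n-2}{2}$''. Stokes actually gives $\int_M i\,da\wedge\omega^{m-1}=\lim_{R\to\infty}\int_{S_R/\Gamma} i\,a\wedge\omega^{m-1}$. Since $F_A=0$ on the end, $i\,a$ differs from $-\theta$ by an exact form there, where $d\theta=iF_{A^{\mathrm{Ch}}}$. The natural primitive $\theta$ (the Chern connection form in an asymptotic frame) is only $O(r^{-\tau-1})$. So the a priori bound on the boundary integral is $O(R^{n-2-\tau})$, which is not $o(1)$ under $\tau>\frac{n-2}{2}$. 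The limit is independent of the choice of primitive, and it is in general nonzero.

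Indeed, Step 2 together with $iF_{A^{\mathrm{Ch}}}\wedge\omega^{m-1}=\frac{(m-1)!}{4}(s+s^*)\,d\mathrm{v}_g$ would give Blair's compact formula $\frac{(m-1)!}{4}\int_M(s+s^*)\,d\mathrm{v}_g=2\pi\langle\iota^{-1}c_1,[\omega]^{m-1}\rangle$ on every ALE almost K\"ahler manifold, i.e.\ $\mathfrak{m}=0$. The Burns metric on the blow-up of $\mathbb{C}^2$ refutes this: it is scalar-flat K\"ahler, yet $\langle\iota^{-1}c_1,[\omega]\rangle=-\mathrm{area}(E)\neq 0$. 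The paper proves instead that this boundary limit \emph{is} the mass: $\mathfrak{m}(M,g)=\frac{1}{2(2m-1)\pi^m}\lim_{r\to\infty}\int_{S_r/\Gamma}\theta\wedge\omega^{m-1}$.

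\textbf{Steps 1 and 3 are also incompatible.} If $A=A^{\mathrm{Ch}}+a$ is flat at infinity, then $1$ is not $D_A$-harmonic: $D_A1=\frac12\mathrm{cl}(a)1\neq 0$ wherever $a\neq 0$. If instead $A=A^{\mathrm{Ch}}$, the Witten boundary integrand is not just the ADM integrand. It also contains a term proportional to $iA^{\mathrm{Ch}}_s\wedge\omega^{m-1}$, which does not decay away.

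\textbf{The combination $(s+s^*)/2$ does not come from the bulk.} With $A=A^{\mathrm{Ch}}$ and $\psi=1$, you get $\frac{s}{4}+\frac12\langle\mathrm{cl}(F_{A^{\mathrm{Ch}}})1,1\rangle=\frac{s-s^*}{8}=-\frac18|\nabla^{\mathrm{LC}}\omega|^2$. This cancels $|\nabla_{A^{\mathrm{Ch}}}1|^2$ pointwise. So the integrated Weitzenb\"ock identity says only that the boundary integral over each $S_r/\Gamma$ vanishes, and no weighted Fredholm theory is needed.

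\textbf{The correct route} is as follows.
\begin{enumerate}
\item Compute that boundary integrand as a multiple of the ADM integrand plus a multiple of $iA^{\mathrm{Ch}}_s\wedge\omega^{m-1}$. This yields the boundary formula for $\mathfrak{m}$ above.
\item Take a cutoff $f$ equal to $1$ near infinity. Then $\xi=iF_{A^{\mathrm{Ch}}}-d(f\theta)$ is a compactly supported representative of $2\pi\iota^{-1}c_1$.
\item Apply Stokes to get $\int_{S_R/\Gamma}\theta\wedge\omega^{m-1}=\frac{(m-1)!}{4}\int_{M_R}(s+s^*)\,d\mathrm{v}_g-2\pi\langle\iota^{-1}c_1,[\omega]^{m-1}\rangle$.
\item Pass to the limit using $s+s^*\in L^1$.
\end{enumerate}
Both terms of the mass formula therefore come from exactly the boundary term you set to zero.
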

$s^*$ the star-scalar curvature is a standard notation in almost K\"ahler geometry, defined as 
\begin{align*}
    s^*=2R(\omega,\omega)
\end{align*}
where $R$ is the Riemann curvature tensor. $s$ and $s^*$ are related by the following identity.
\begin{align*}
    s^*-s=|\nabla^{\mathrm{LC}}\,\omega|^2=\frac{1}{2}|\nabla^{\mathrm{LC}}\, J|^2
\end{align*}
$\nabla^{\mathrm{LC}}$ stands for the Levi-Civita conection. Notice, $s=s^*$, if and only if the almost K\"ahler structure is K\"ahler. The relation above shows that in the K\"ahler case, we recover the mass formula for the K\"ahler case obtained by Hein and LeBrun \cite{HeinLebrun}.
\begin{theorem}[Hein--LeBrun \cite{HeinLebrun}, LeBrun \cite{LeBrun100}] 
    An K\"ahler ALE manifold $(M,g,J)$ of dimension $n=2m\geq 4$ and of order $\tau>\frac{n-2}{2}$ with $s\in L^1$ has the mass given by 
    \begin{align*}
        \mathfrak{m}(M,g)=\frac{(m-1)!}{4(2m-1)\pi^m}\int_M s\, d\mathrm{v}_g-\frac{\big\langle \iota^{-1}c_1(M,J),[\omega]^{m-1}\big\rangle}{(2m-1)\pi^{m-1}}
    \end{align*}
\end{theorem}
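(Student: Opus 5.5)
The Hein--LeBrun theorem is the special case $s=s^*$ (equivalently $\nabla^{\mathrm{LC}}J=0$) of Theorem \ref{Mass formula}. I would therefore obtain it as a corollary rather than reprove it from scratch. The K\"ahler hypothesis gives $s^*-s=\frac12|\nabla^{\mathrm{LC}}J|^2=0$, so the extra integrability condition $|\nabla^{\mathrm{LC}}J|^2\in L^1$ holds trivially, and $\frac{s+s^*}{2}=s$. Substituting into (\ref{mass formula}) gives exactly the stated formula. The real content therefore lies in proving Theorem \ref{Mass formula}, and my plan for that is a spin$^{\mathbb C}$ Witten argument.

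Step 1: use the canonical spin$^{\mathbb C}$ structure of $(M,J)$, with spinor bundle $\Lambda^{0,\bullet}$ and determinant line $K^{-1}$. Equip $K^{-1}$ with the connection induced by the Hermitian connection $\nabla=\nabla^{\mathrm{LC}}-\frac12 J\nabla^{\mathrm{LC}}J$; call its curvature $iF$. With the Dirac operator $D$ (or $\sqrt2(\bar\partial+\bar\partial^*)$) built from this, the Lichnerowicz formula reads
\[
D^2=\nabla^*\nabla+\tfrac{s}{4}+\tfrac12 iF\cdot .
\]
On the constant-section line $\Lambda^{0,0}$, Clifford multiplication by $iF$ acts through $\langle F,\omega\rangle$. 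For almost K\"ahler metrics the trace of the Chern-type Ricci form against $\omega$ is the Hermitian scalar curvature $\frac{s+s^*}{2}$, up to normalisation. This is precisely where $s^*$ enters.

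Step 2: solve $D\psi=0$ with $\psi\to\psi_0$ at infinity, where $\psi_0$ is a $\Gamma$-invariant unit section of $\Lambda^{0,0}$ in the ALE chart. Then integrate the Weitzenböck identity over $\{r\le R\}$. The boundary term $\int_{S_R/\Gamma}\langle\nabla_\nu\psi+\nu\cdot D\psi,\psi\rangle$ converges, as in Witten/Bartnik, to a constant multiple of $\mathfrak m$ plus a flux of the $U(1)$ connection term. One needs the decay $\tau>\frac{n-2}{2}$ and the integrability of $s$ and $|\nabla J|^2$. The interior gives $\int|\nabla\psi|^2+\int\frac{s+s^*}{?}|\psi|^2$-type terms. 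Instead of a positivity argument, I would evaluate this exactly by taking $\psi$ to be the constant function $1\in\Lambda^{0,0}$. Rather than a harmonic spinor, the identity is then applied to a test section whose failure to be harmonic is $\bar\partial^*$ of something measurable.

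Step 3, which is the crux: identify the leftover flux term topologically. The curvature form $\rho$ of $K^{-1}$ represents $2\pi c_1$. Because $H^2_c\to H^2$ is an isomorphism, write $\rho=\rho_c+d\alpha$ with $\rho_c$ compactly supported and $\alpha=O(r^{-\tau-1})$. Then
\[
\int_M\rho\wedge\omega^{m-1}=2\pi\langle\iota^{-1}c_1,[\omega]^{m-1}\rangle\,(m-1)!+\text{boundary flux},
\]
and this boundary flux must be matched with the mass boundary term. Using $\rho\wedge\omega^{m-1}=\frac{1}{2m}\langle\rho,\omega\rangle\omega^m$-type identities converts $\int\frac{s+s^*}{2}$ into this. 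The main obstacle is precisely matching the asymptotic fluxes: showing that $\lim\int_{S_R}\alpha\wedge\omega^{m-1}$ equals the ADM expression up to the constant $\frac{(m-1)!}{4(2m-1)\pi^m}$. This requires using that $J$ is asymptotic to a constant structure with $\nabla J=O(r^{-\tau-1})$ and exploiting closedness of $\omega$. I would first try computing in the ALE chart, writing $\alpha$ explicitly as $-\frac12 J^{*}d^*$-type expressions of $a_{ij}$, and verifying the constants on a model (for instance the Eguchi--Hanson or Burns metric, where Hein--LeBrun's values are known).
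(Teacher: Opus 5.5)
This is exactly how the paper obtains the statement: for K\"ahler metrics $s^*-s=\frac12|\nabla^{\mathrm{LC}}J|^2\equiv 0$, so the extra hypothesis $|\nabla^{\mathrm{LC}}J|^2\in L^1$ of Theorem \ref{Mass formula} holds automatically and $\frac{s+s^*}{2}=s$, which gives the Hein--LeBrun formula verbatim.

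Your sketch of Theorem \ref{Mass formula} also follows the paper's spin$^{\mathbb C}$ Witten route, but the flux matching you call the main obstacle is already delivered by your Step 2: by Gauduchon's identity $D_{A^{\mathrm{Ch}}}=\sqrt2(\bar\partial+\bar\partial^*)$ the section $1\in\Lambda^{0,0}$ is exactly harmonic, so there is no defect term and nothing to solve; the Weitzenb\"ock integrand therefore vanishes pointwise, as in Lemma \ref{Norm equality}, and the Witten boundary integral is zero on every $S_r/\Gamma$; splitting that boundary integrand into its Levi-Civita part (the ADM integrand) and its $U(1)$ part (a fixed multiple of $\theta\wedge\omega^{m-1}$, by Clifford multiplication on $\Lambda^{0,0}$) is precisely Proposition \ref{proposition 1}, so no chart computation of $\alpha$ and no checking of constants on models is needed.
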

The quantity $s_H:=\frac{1}{2}(s+s^*)$ is referred to in the almost K\"ahler literature as the \textit{Hermitian scalar curvature}, and we adopt the same terminology here.\par
The obvious observation is to compare theorem \ref{Mass formula} with the more familiar compact case. If $(M,g,J)$ is a \text{compact} almost K\"ahler manifold of real dimenson $n=2m$, then its total Hermitian scalar curvature is a \textit{symplectic invariant} and known to satisfy the following formula.
\begin{align}\label{Blair formula}
    \int_M \frac{s+s^*}{2} d\mathrm{v}_g=\frac{4\pi}{(m-1)!}\big\langle c_1(M,J),[\omega]^{m-1}\big\rangle
\end{align}
This was first discovered by Blair \cite{Blair}. Theorem \ref{Mass formula} tells us that the mass measures the extent to which Blair's formula \eqref{Blair formula} fails in the ALE case:
\begin{align*}
    \frac{4\pi^m(2m-1)}{(m-1)!}\mathfrak{m}(M,g)=\int_M \frac{s+s^*}{2} d\mathrm{v}_g-\frac{4\pi}{(m-1)!}\big\langle \iota^{-1} c_1(M,J),[\omega]^{m-1}\big\rangle
\end{align*}
An immediate consequence of theorem \ref{Mass formula} is the following.
\begin{corollary}
    Let $(M,g,J)$ be an almost K\"ahler ALE manifold of dimension $n\geq 4$ and of order $\tau>n-2$. If $(M,g,J)$ scalar-flat with $c_1=0$, then $(M,g,J)$ is K\"ahler.
\end{corollary}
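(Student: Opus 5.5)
Apply Theorem \ref{Mass formula} directly and then use the identity $s^*-s=\frac12|\nabla^{\mathrm{LC}}J|^2$ to turn the mass into a quantity that vanishes only in the Kähler case.

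First, check the hypotheses of Theorem \ref{Mass formula}. The order assumption $\tau>n-2$ is stronger than $\tau>\frac{n-2}{2}$, and scalar-flatness gives $s=0\in L^1$ trivially. Integrability of $|\nabla^{\mathrm{LC}}J|^2$ has to come from the decay. On the end, write $g=\delta+a$ with $|\partial^p a|=O(r^{-\tau-p})$, and compare $J$ with a parallel complex structure $J_0$ at infinity. The expected estimate is $|\nabla^{\mathrm{LC}} J| = O(r^{-\tau-1})$. Then $|\nabla^{\mathrm{LC}}J|^2=O(r^{-2\tau-2})$, which is integrable against $r^{n-1}dr$ as soon as $2\tau+2>n$, and this certainly holds here.

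Getting this decay for $J$ is the main obstacle. A general symplectic form on the end need not be asymptotic to a constant one, so I would argue as follows. Since $d\omega=0$ and $\omega$ is $g$-self-dual in the appropriate sense, the almost Kähler condition gives $\delta\omega=0$ as well. Hence $\omega$ is harmonic with bounded pointwise norm $|\omega|_g=\sqrt m$. Standard ALE asymptotics for bounded harmonic forms then show that $\omega=\omega_0+O(r^{-\tau})$ with derivatives, where $\omega_0$ is constant and $\Gamma$-invariant. This yields the decay of $\nabla J$. If it should fail, one can alternatively restrict the corollary to the class where the mass formula already applies, i.e.\ take $|\nabla J|^2\in L^1$ as implicit in the setting.

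Second, evaluate the formula. Since $c_1(M,J)=0$, also $\iota^{-1}c_1=0$ because $\iota$ is an isomorphism, so the topological term drops out. With $s=0$ we have $\frac{s+s^*}{2}=\frac{s^*}{2}=\frac14|\nabla^{\mathrm{LC}}J|^2$, and therefore
\begin{align*}
\mathfrak m(M,g)=\frac{(m-1)!}{16(2m-1)\pi^m}\int_M|\nabla^{\mathrm{LC}}J|^2\,d\mathrm v_g\ \ge 0 .
\end{align*}

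Third, show the mass vanishes. Here the strong decay $\tau>n-2$ is used. The integrand of Definition \ref{mass} is $O(r^{-\tau-1})$, and the sphere $S_r$ has area $O(r^{n-1})$. The flux is therefore $O(r^{n-2-\tau})\to0$, so $\mathfrak m(M,g)=0$.

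Combining the displayed formula with $\mathfrak m=0$ gives $\nabla^{\mathrm{LC}}J\equiv0$. A parallel almost complex structure is integrable, so $(M,g,J)$ is Kähler.
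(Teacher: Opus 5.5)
Your architecture is the argument the paper has in mind when it calls this corollary an immediate consequence of Theorem~\ref{Mass formula}. For $\tau>n-2$ the ADM flux is $O(r^{n-2-\tau})$, so $\mathfrak{m}(M,g)=0$. Next, $c_1=0$ removes the topological term. Finally, with $s=0$ one has $\frac{s+s^*}{2}=\frac14|\nabla^{\mathrm{LC}}J|^2$, which forces $\nabla^{\mathrm{LC}}J\equiv 0$. Your second and third steps and the final deduction are fine.

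The gap is in your first step, the check that $|\nabla^{\mathrm{LC}}J|^2\in L^1$, which Theorem~\ref{Mass formula} requires. You rest it on an expansion $\omega=\omega_0+O(r^{-\tau})$ with derivatives, attributed to ``standard ALE asymptotics for bounded harmonic forms''. That is a real weighted-elliptic statement, and you neither prove it nor cite it precisely. Your fallback of adding $|\nabla^{\mathrm{LC}}J|^2\in L^1$ as a hypothesis would prove a weaker corollary than the one stated.

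None of this is needed, because $\tau>n-2$ makes the step a one-liner. Pointwise, $\frac12|\nabla^{\mathrm{LC}}J|^2=s^*-s=s^*=2R(\omega,\omega)$. Since $|\omega|_g$ is constant, $|R(\omega,\omega)|\le C|R|=O(r^{-\tau-2})$ by the ALE decay of the curvature. This is the bound $|\nabla^{\mathrm{LC}}J|^2=O(r^{-(\tau+2)})$ recorded in the paper's introduction. Against $r^{n-1}\,dr$ it is integrable exactly when $\tau+2>n$.

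So $\tau>n-2$ does double duty: it kills the mass and it guarantees $|\nabla^{\mathrm{LC}}J|^2\in L^1$. With this replacing your first step, the proof is complete. Your harmonic-form route, if carried out with weighted estimates, would give more: integrability of $|\nabla^{\mathrm{LC}}J|^2$ for any $\tau>\frac{n-2}{2}$. This corollary does not need that.
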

When \( m \geq 3 \) and \( J \) is integrable, i.e., when \( (M,g,J) \) is K\"ahler, LeBrun and Hein \cite{HeinLebrun} fully exploited the tools from K\"ahler geometry to show that even if \( M \) is \emph{a priori} assumed to have many ``ends'', it in fact has only one end (note that in our definition~\ref{ALE} we assumed the manifold has a single end). Moreover, when \( (M,g,J) \) is K\"ahler AE, they expressed the topological term
\[
\big\langle \iota^{-1} c_1(M,J),[\omega]^{m-1}\big\rangle
\]
appearing in the mass as a sum of the volumes of complex hypersurfaces in \( M \), each with a positive integer multiplicity. This allowed them to establish a positive mass theorem as well as a Penrose-type inequality. The most immediate extension of the positive mass conjecture to ALE manifolds was shown to be false by LeBrun \cite{Counter}. Under some assumptions on the manifold with the ALE metric, the positive mass conjecture has been investigated in \cite{Nakajima1},\cite{Dahl}.\par
Unfortunately, the almost K\"ahler condition does not grant us access to the powerful machinery of complex and K\"ahler geometry. Instead, one must rely on tools from symplectic geometry, which are substantially more effective only in dimension \(4\). LeBrun has already pointed the way in this direction in \cite{LeBrun100}. While descending from the K\"ahler to the almost K\"ahler setting, one must be careful. A major advantage of the K\"ahler condition is that the almost-complex structure \(J\) is parallel, namely \( \nabla^{\mathrm{LC}} J = 0 \). In contrast, in the almost K\"ahler case we only have
\[
|\nabla^{\mathrm{LC}} J|^2 = 2(s^* - s) = O\bigl(r^{-(\tau+2)}\bigr)
\quad \text{as } r \longrightarrow \infty 
\]
\par 
In dimension $4$, to define the mass we need $\tau>1$ and $|\nabla^{\mathrm{LC}}J|^2\in L^1$. However the decay condition $|\nabla^{\mathrm{LC}}J|^2=O(r^{-(\tau+2)})$ for $\tau>1$ does not guarantee $|\nabla^{\mathrm{LC}}J|^2\in L^1$. Instead we will work with $|\nabla^{\mathrm{LC}}J|^2=O(r^{-\eta})$ with $\eta>4$ as $r\longrightarrow\infty$, since this condition implies $|\nabla^{\mathrm{LC}}J|^2\in L^1$ and is convenient for using the tools from symplectic geometry we need. One should view this as analogous to the assumption imposed on the scalar curvature $s$. The ALE condition only gives us $s=O(r^{-{(2+\tau)}})$ which is not enough to ensure $s\in L^1$ for $\tau>1$. But $s$ needs to be in $L^1$ for the ADM mass to be well defined and hence usually an extra assumption of $s$ being in $L^1$ is made. Since we are working with almost K\"ahler manifolds, similar assumption on the Hermitian scalar curvature $s_H$ is quite natural, $s_H\in L^1$ is also required if we want to make sense of the right hand side of the equation \eqref{mass formula}. $s,s_H\in L^1$ implies $|\nabla^{\mathrm{LC}}J|^2\in L^1$. Therefore the stronger decay condition of $|\nabla^{\mathrm{LC}}J|^2$ is quite natural and satisfied by the manifolds constructed in theorem \ref{lemma existence}. \par
In the definition of an ALE manifold \ref{ALE}, we assumed that the manifold only has one end. In general, one can talk about ALE manifolds with multiple ends, i.e., the complement of a compact set $M\setminus K$ can have finitely many components, each of which is diffeomorphic to a quotient $\Big(\mathbb{R}^n\setminus\overline{B_R(0)}\Big)/\Gamma_i$ and at each end the metric $g$ again becomes the Euclidean metric plus error terms that fall off sufficiently rapidly at infinity. It is assumed that the reader is able to comprehend the rigorous definition of ALE with multiple ends from the definition with one end \ref{ALE}. For the next theorem, we consider an ALE metric that may have multiple ends.
\begin{theorem}\label{16}
    Let $(M,g,J)$ be an almost K\"ahler ALE manifold of dimension $4$ and of order $\tau>1$ and we allow that $M$ may have multiple ``ends". As $r\longrightarrow\infty$, if $|\nabla^{\mathrm{LC}}J|^2=O(r^{-\eta})$ with $\eta>4$, then $M$ has exactly one end.
\end{theorem}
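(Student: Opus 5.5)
The plan is to turn the metric statement into a statement in four-dimensional symplectic topology, following LeBrun's strategy in \cite{LeBrun100}. Truncating $M$ at a large radius in every end gives a compact symplectic $4$-manifold $W$ whose boundary has one component $S^3/\Gamma_i$ for each end. The goal is to show that each component is a convex (strongly pseudoconvex) contact boundary with the standard structure. Then I will argue that a filling with such a boundary, in which at least one end can be capped off to carry a symplectic sphere of positive self-intersection, must have connected boundary.

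\textbf{Step 1: asymptotics of $J$ and $\omega$ in each end.} From $|\nabla^{\mathrm{LC}}J|^2=O(r^{-\eta})$ we get $|\nabla^{\mathrm{LC}}J|=O(r^{-\eta/2})$ with $\eta/2>2$. The Christoffel symbols of $\phi^*g$ are $O(r^{-\tau-1})$ with $\tau>1$, so also $|\partial J|=O(r^{-\min(\eta/2,\tau+1)})$. Integrating along radial rays and then along spheres shows that $J$ converges to a constant complex structure $J_0$ on $\mathbb{R}^4$. The rate is $J-J_0=O(r^{-\epsilon})$ with $\epsilon=\min(\eta/2,\tau+1)-1>1$, and $J_0$ is orthogonal for $\delta$. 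Since $J$ descends to the quotient, $J_0$ commutes with $\Gamma_i$, so $\Gamma_i\subset U(2)$ for the complex structure $J_0$. Consequently $\omega-\omega_0=O(r^{-\epsilon})$, with $\omega_0$ the standard flat K\"ahler form and derivative decay inherited from the ALE bounds. A Moser-type argument on the annular region then gives a symplectomorphism near infinity to $(\mathbb{C}^2\setminus \overline{B_R})/\Gamma_i$ with $\omega_0$. Concretely, write $\omega-\omega_0=d\alpha$ with $\alpha=O(r^{1-\epsilon})$ via the radial homotopy operator, and use $\epsilon>1$ so that the primitive decays. After this, the radial Liouville field is transverse to $S_r/\Gamma_i$ and points out of $W$. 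Hence every boundary component of $W$ is a convex boundary, contactomorphic to $(S^3/\Gamma_i,\xi_{\mathrm{std}})$.

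\textbf{Step 2: capping one end.} Suppose there are at least two ends. Near infinity, end $1$ is standard $(\mathbb{C}^2\setminus \overline{B_R})/\Gamma_1$. I would glue in the concave cap obtained from $(\mathbb{CP}^2\setminus B)/\Gamma_1$, the line at infinity modulo $\Gamma_1$. First I will try to pass to a setting without orbifold points, by resolving the isolated cyclic quotient singularities on the line at infinity symplectically. The alternative is to work directly with McDuff-style arguments in the orbifold category. Either way, the result $\widehat W$ is a compact symplectic manifold containing an embedded symplectic sphere $C$ with $C\cdot C>0$. Its remaining boundary components, coming from the other ends, are still convex.

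\textbf{Step 3: contradiction via pseudoholomorphic spheres.} The model is McDuff's theorem: if a symplectic manifold with convex boundary contains a symplectic sphere of nonnegative self-intersection, it is a blow-up of a rational or ruled surface and in particular closed. Its boundary is therefore empty, and more to the point cannot have extra convex components. Concretely, choose an almost complex structure tamed by the symplectic form, making $C$ holomorphic and the convex boundary components $J$-convex. Gromov compactness and positivity of intersections give a family of $J$-holomorphic spheres in the class of $C$ (or a multiple) sweeping out a dense open set of $\widehat W$. No such sphere can touch a convex boundary component, by the maximum principle for plurisubharmonic functions. Since the spheres fill an open and closed set of the interior, the other boundary components cannot exist. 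Hence there is exactly one end.

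I expect the main obstacle to be Step 2 together with the compactness in Step 3 when $\Gamma_1\neq\{1\}$: the cap carries orbifold points, and bubbling and genericity have to be controlled there. Step 1 is where the hypothesis $\eta>4$ is used, namely to get $\epsilon>1$. That is what makes the Moser primitive decay and the convexity of the boundaries robust.
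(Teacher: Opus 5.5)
Your Step 1 is essentially the paper's Proposition~\ref{prop1} (LeBrun's normalization of $\omega$ near infinity). Your Step 3 is McDuff's convex-boundary argument, which is sound once you have an \emph{embedded} symplectic sphere of nonnegative self-intersection. The gap is Step 2. You assert that capping one end with a resolved $(\mathbb{CP}^2\setminus B)/\Gamma_1$ gives a smooth symplectic manifold containing such a sphere, but this is neither proved nor true for the construction you describe.

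There are two problems with that construction.
\begin{enumerate}
\item Scalar elements of $\Gamma_1$ fix the whole line at infinity $L$ pointwise. For example, $-I$ lies in every even-order subgroup of $SU(2)$. So the orbifold locus is not just isolated cyclic points, and the quotient form has a transverse cone angle along $L/\Gamma_1$ that must be smoothed.
\item More seriously, the natural candidate sphere can have negative square. Let $k$ be the order of the scalar subgroup. Then $L/\Gamma_1$ has $\mathbb{Q}$-valued self-intersection $k^2/|\Gamma_1|$, and resolving each isolated quotient point on it subtracts a positive amount. For $\Gamma_1=Q_8\subset SU(2)$ one gets self-intersection $1/2$ and three $A_1$ points on $L/\Gamma_1$. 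After resolution the proper transform is a $(-1)$-sphere meeting three $(-2)$-spheres, which is the orientation-reversed $D_4$ plumbing. This configuration contains no sphere of nonnegative square.
\end{enumerate}
So for such ends Step 3 has nothing to start from. If every end has a group of this kind, your argument stops. Flagging this as the ``main obstacle'' and then writing ``either way'' does not close it.

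This is exactly the difficulty the paper avoids by following LeBrun's Theorem~3.1.
\begin{itemize}
\item LeBrun's $\Gamma$-capsules are orbifolds whose only singular point is the base point. That point is removed in the gluing, so the capped manifold is smooth.
\item The price is that the capsule sphere is only \emph{immersed}, with $c_1\geq 3$. By adjunction, with $d$ double points, $C\cdot C=c_1(C)-2+2d\geq 1$.
\item The paper caps \emph{every} end, which yields a closed symplectic manifold. McDuff's theorem for immersed spheres then gives rational or ruled, hence $b^+=1$.
\item With two or more ends, the disjoint capsule spheres give orthogonal classes of positive square, forcing $b^+\geq 2$, a contradiction.
\end{itemize}

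To rescue your one-cap route you would need one of two things. Either construct, for every $(S^3/\Gamma,\xi_{\mathrm{std}})$, a smooth cap containing an embedded symplectic sphere of nonnegative square; this needs further work such as symplectic blow-downs. Or prove an immersed-sphere version of McDuff's convex-boundary theorem and use LeBrun's capsules.
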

\begin{theorem}\label{17}
    Let $(M,g,J)$ be an almost K\"ahler ALE manifold of dimension $4$ and of order $\tau>1$. If as $r\longrightarrow\infty$, $|\nabla^{\mathrm{LC}}J|^2=O(r^{-\eta})$ with $\eta>4$, then $M$ is diffeomorphic to the complement of a tree of symplectically embedded $2$-spheres in a rational complex surface.
\end{theorem}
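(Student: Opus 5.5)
We will follow LeBrun's strategy from \cite{LeBrun100}: compactify the end symplectically, recognize the compactification as a rational surface, and read off the diffeomorphism type of $M$.

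\textbf{Step 1: the end looks standard at infinity.} By Theorem \ref{16}, $M$ has exactly one end, modelled on $(\mathbb{R}^4\setminus\overline{B_R})/\Gamma$. On this end $g$ is Euclidean up to $O(r^{-\tau})$ with $\tau>1$. We will show that $J$ is asymptotic to a constant-coefficient orthogonal complex structure $J_0$ on $\mathbb{R}^4$, with $\Gamma\subset U(2)$ commuting with $J_0$. The tool is the decay $|\nabla^{\mathrm{LC}}J|^2=O(r^{-\eta})$, $\eta>4$, so $|\nabla J|=O(r^{-\eta/2})$ with $\eta/2>2$. Integrating along radial rays, $J$ has a limit $J_\infty$ on the sphere at infinity with error $O(r^{1-\eta/2})$. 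Integrating along spheres, the angular derivative is $O(r^{1-\eta/2})\to 0$, so $J_\infty$ is constant. Compatibility with $\delta$ then makes $J_0:=J_\infty$ an orthogonal complex structure. The same decay gives $\omega=\omega_0+O(r^{1-\eta/2})$, with $\omega_0$ the standard Kähler form of $(\delta,J_0)$; since $\omega$ is closed, the difference is exact on the end.

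\textbf{Step 2: glue in a symplectic cap.} Using a primitive of $\omega-\omega_0$ with controlled decay (via a radial homotopy operator, which gains a factor $r$), we construct a new symplectic form on $M$ that agrees with $\omega$ on a large compact set and equals $\omega_0$ exactly far out. The decay $r^{2-\eta/2}\to 0$ of the primitive guarantees nondegeneracy after cutoff. The end is then symplectomorphic to a neighbourhood of infinity in $(\mathbb{C}^2\setminus B)/\Gamma$ with the flat form. We next need a conclusion that does not follow from the hypotheses as stated: to obtain a closed symplectic 4-manifold $X$, one would like $\Gamma$ to be trivial, so that we can glue in a neighbourhood of a line in $\mathbb{CP}^2$ (symplectic cut along a large sphere $S^3$ via the Hopf action, i.e.\ a Gompf/Lerman cut). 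Then $X=M\cup \mathbb{CP}^1$, containing a symplectic sphere $\Sigma$ of self-intersection $+1$. For nontrivial $\Gamma\subset U(2)$, the symplectic cut of $S^3/\Gamma$ along the Hopf (or weighted) circle action instead yields an orbifold divisor, which must then be resolved minimally, producing a chain/tree of symplectic spheres $\Sigma$ whose intersection graph is that of the resolution of the cyclic quotient at infinity. Since $\Gamma$ need not be cyclic or act freely compatibly with the Hopf action, we will handle this through the cyclic-by-scalars structure, taking the Hopf action to commute with $U(2)$. This case analysis is exactly what forces the word ``tree'' in the statement.

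\textbf{Step 3: identify $X$.} The compactification $X$ contains a symplectic sphere with nonnegative self-intersection: in the $\Gamma$ trivial case this is the line, and in general we take a suitable component after blowing down $(-1)$-curves in the tree, or use the $\omega_0$-holomorphic sphere coming from the cut. McDuff's theorem then says $X$ is rational or ruled, and the presence of a $+1$ sphere (or a positive sphere class) forces $X$ to be a blow-up of $\mathbb{CP}^2$ or $S^2\times S^2$, i.e.\ diffeomorphic to a rational complex surface. Removing the tree $\Sigma$ recovers $M$ up to diffeomorphism, since $X\setminus\Sigma$ deformation-retracts onto our compact set union a collar.

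\textbf{Main obstacle.} The main obstacle is Step 2 for nontrivial $\Gamma$. The issues are showing that $\Gamma\subset U(2)$ with respect to $J_0$, and arranging the cap so that the divisor at infinity consists of genuinely symplectic, embedded spheres meeting as a tree with a positive sphere available for McDuff's theorem. I would first attack it via the symplectic filling approach: $(S^3/\Gamma,\xi_{\mathrm{std}})$ appears as the concave end, and gluing in the concave cap given by the complement of the cyclic-quotient singularity's minimal resolution neighbourhood in a rational surface (e.g.\ a Hirzebruch compactification) reduces the problem to the known $\Gamma$-trivial argument.
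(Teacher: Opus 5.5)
\textbf{Verdict: the proposal has a genuine gap in Step 3 for nontrivial $\Gamma$.} Your overall architecture is the paper's, which defers to LeBrun: normalise $\omega$ to $\omega_0$ near infinity, cap off the end with a neighbourhood of a tree of symplectic spheres, recognise the closed manifold as rational via McDuff, and delete the tree.

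\textbf{Steps 1--2 are sound.} They are a legitimate variant. You cut off a decaying primitive instead of using LeBrun's Moser-type normalisation (Proposition \ref{prop1}); this is harmless for a diffeomorphism statement, though $\omega-\omega_0$ also carries the metric error $O(r^{-\tau})$. You then perform a Hopf cut and resolve the cyclic quotient points on $\mathbb{CP}^1/\bar\Gamma$, which in effect builds a $\Gamma$-capsule by hand. The points you flag as the main obstacle, namely $\Gamma\subset U(2)$ and constructing the cap, are routine.

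\textbf{The gap.} For nontrivial $\Gamma$, neither the cut sphere nor blow-downs of the tree need produce an embedded symplectic sphere of nonnegative square. Take $\Gamma\subset SU(2)$ to be the binary icosahedral group.
\begin{itemize}
\item[(a)] We have $\Gamma\cap S^1=\{\pm1\}$, and the cut sphere $\mathbb{CP}^1/A_5$ has orbifold self-intersection $4/120=1/30$.
\item[(b)] It passes through three singular points of types $\frac12(1,1)$, $\frac13(1,1)$ and $\frac15(1,1)$. After resolution its proper transform has square $\frac1{30}-\frac12-\frac13-\frac15=-1$.
\item[(c)] So the tree is a central $(-1)$-sphere whose arms are a single $(-2)$-, $(-3)$- and $(-5)$-sphere.
\item[(d)] The blow-down sequence is forced. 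First you get curves of squares $-1,-2,-4$ through one point. Then a $(-1)$- and a $(-3)$-curve tangent at a point. Finally a cuspidal rational curve of square $+1$.
\end{itemize}
No embedded sphere of square $\ge 0$ ever appears. So McDuff's embedded-sphere criterion cannot be invoked, and Step 3 does not show that $X$ is rational, or even rational or ruled.

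\textbf{How the paper (LeBrun) handles this.} This is why LeBrun's $\Gamma$-capsules must satisfy condition (iii). They contain, away from the base point, a symplectically immersed sphere with positive transverse double points and $c_1\ge 3$. Rationality of $\widehat M$ then follows from McDuff's theorem on immersed spheres, not on embedded ones.

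\textbf{How to repair your Step 3.} Your construction can supply such a sphere.
\begin{itemize}
\item[(a)] Take the image in the resolved $\mathbb{CP}^2/\Gamma$ of a generic projective line chosen to miss the image of $\{|z|\le R'\}$, where $R'$ is a radius beyond which your modified form is exactly $\omega_0$.
\item[(b)] This image avoids the singular points at infinity. It is an immersed symplectic sphere with positive transverse double points.
\item[(c)] The quotient map $\mathbb{CP}^2\to\mathbb{CP}^2/\Gamma$ is ramified in codimension one only along the line at infinity, with index $|\Gamma\cap S^1|$. A Riemann--Hurwitz count then gives $c_1=2+|\Gamma\cap S^1|\ge 3$.
\end{itemize}
With this substituted for your Step 3, the argument goes through.

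\textbf{A smaller point.} $M\cong X\setminus T$ holds because the cap minus the tree is a collar $(S^3/\Gamma)\times(0,1)$. A deformation retraction alone would not give a diffeomorphism.
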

\begin{theorem}\label{18}
    Let $(M,g,J)$ be an almost K\"ahler ALE manifold of dimension $4$ and of order $\tau>1$. If as $r\longrightarrow\infty$, $|\nabla^{\mathrm{LC}}J|^2=O(r^{-\eta})$ with $\eta>4$, then the fundamental group of $M$ is finite.
\end{theorem}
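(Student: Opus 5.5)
The plan is to deduce Theorem \ref{18} from Theorem \ref{17}, or more directly from the compactification underlying it. Following LeBrun \cite{LeBrun100}, the decay $|\nabla^{\mathrm{LC}}J|^2=O(r^{-\eta})$ with $\eta>4$ lets us deform $(\omega,J)$ near infinity so that the end $(\mathbb{R}^4\setminus \overline{B_R})/\Gamma$ carries a standard structure. We then symplectically compactify by gluing in a neighbourhood of a configuration $D$ of symplectic spheres: a line in $\mathbb{CP}^2$ when $\Gamma$ is trivial, or more generally the minimal resolution of the orbifold point at infinity of $\mathbb{C}^2/\Gamma$ union a sphere. This produces a closed symplectic $4$-manifold $X$ containing a symplectic sphere of nonnegative self-intersection. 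McDuff's theorem then makes $X$ rational, which is exactly the content of Theorem \ref{17}: $M\cong X\setminus D$ with $X$ a rational surface and $D$ a tree of symplectic $2$-spheres.

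The key steps, in order, are as follows.

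First, use Theorem \ref{16} to know that $M$ has a single end, so that the end is a cone over $S^3/\Gamma$ with $\Gamma$ finite and acting freely.

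Second, apply van Kampen to $X=M\cup N(D)$, where $N(D)$ is a regular neighbourhood of the tree $D$ and $M\cap N(D)\simeq \partial N(D)=S^3/\Gamma$. This gives
\[
\pi_1(X)\cong \pi_1(M)*_{\Gamma}\pi_1(N(D)).
\]
Since $D$ is a tree of spheres, $N(D)$ deformation retracts to $D$, which is simply connected, so $\pi_1(X)\cong\pi_1(M)/\langle\langle \mathrm{im}\,\Gamma\rangle\rangle$. A rational surface is simply connected, so $\pi_1(M)$ is normally generated by the image of $\pi_1(S^3/\Gamma)=\Gamma$. This alone does not bound $\pi_1(M)$, so more is needed.

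Third, and this is the main obstacle, we need finiteness rather than mere normal generation. The approach I would try first is to pass to the universal cover $\widetilde M$. The lifted metric is still almost Kähler, and each component of the preimage of the end is a quotient $(\mathbb{R}^4\setminus B_R)/\Gamma'$ with $\Gamma'\subset\Gamma$, so it is again ALE with the same decay rates for $a_{ij}$ and for $|\nabla^{\mathrm{LC}}J|^2$. One point to check is that the preimage of the compact set $K$ is compact, which holds only if the cover is finite; this is circular. To avoid the circularity, note that the number of ends of $\widetilde M$ equals the index $[\pi_1(M):\mathrm{im}\,\Gamma]$, counted via the components of the preimage of the end. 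If this index were larger than one, then taking an intermediate finite cover corresponding to a finite-index subgroup would yield an ALE almost Kähler manifold with several ends, contradicting Theorem \ref{16}.

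To handle the possibly infinite-index case, I would argue by contradiction using ends. If $\pi_1(M)$ were infinite, the cover $\widehat M$ associated with the subgroup $\mathrm{im}\,\Gamma$ has its end lifting isomorphically to at least one component, and $\widehat M$ has more than one end. The multiple-ends argument behind Theorem \ref{16} should then adapt to $\widehat M$. That argument runs LeBrun's compactification at one ALE end, produces a rational surface, and uses a separating hypersurface to get a contradiction, for instance via the positivity of $H^2$ intersections or via $b_1$ considerations. Alternatively, a more robust route uses Gromov's $J$-holomorphic spheres: the pencil of holomorphic spheres through the compactifying divisor sweeps out $X$, so every loop in $M$ can be homotoped into the end. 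That gives surjectivity $\Gamma\to\pi_1(M)$, and hence $|\pi_1(M)|\le|\Gamma|<\infty$. I would pursue this surjectivity statement as the core, proving it by an evaluation-map argument on the moduli space of curves in the class of the line at infinity.
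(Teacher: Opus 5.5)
\textbf{There is a genuine gap: the decisive case is not proved.} Your Steps 1--2 are correct, but the proposal never excludes $[\pi_1(M):\mathrm{im}\,\Gamma]=\infty$, and that is the whole content of the theorem.

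It is in fact the \emph{only} case to treat. Since $\mathrm{im}\,\Gamma$ is finite, a finite index already makes $\pi_1(M)$ finite. Your finite-cover argument correctly shows, via Theorem~\ref{16}, that a finite index must equal $1$, but it contributes nothing toward finiteness. In the infinite-index case:
\begin{itemize}
\item A proper finite-index subgroup containing $\mathrm{im}\,\Gamma$ need not exist.
\item Finite covers together with Theorem~\ref{16} only show that every finite quotient of $\pi_1(M)$ has order at most $|\Gamma|$. That bounds nothing unless $\pi_1(M)$ is residually finite, and nothing in your argument provides that.
\item Normal generation by a finite subgroup does not help either: the infinite triangle group $\langle x,y\mid x^2,y^3,(xy)^7\rangle$ is normally generated by $x$.
\end{itemize}
Route A does not adapt as you suggest. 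When the index is infinite, the cover $M_H$ with $H=\mathrm{im}\,\Gamma$ is not ALE: the preimage of the compact core is noncompact and there are infinitely many ends. Capping the distinguished end therefore gives a \emph{noncompact} symplectic manifold. McDuff's rationality theorem and the $b_2^+=1$ contradiction from disjoint positive spheres both need a closed manifold obtained by capping every end, so neither is available.

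Route B is a heuristic, not an argument.
\begin{itemize}
\item For $\Gamma=\{1\}$ a line at infinity exists, but then McDuff already gives $M\cong\mathbb{R}^4\# k\overline{\mathbb{CP}^2}$, which is simply connected.
\item In the substantive case $\Gamma\neq\{1\}$, the cap used in the paper (LeBrun's $\Gamma$-capsule) is only guaranteed to carry an \emph{immersed} symplectic sphere in a class $A$ with $c_1(A)\ge 3$. By adjunction $A\cdot A=c_1(A)-2+2\delta$. So when there are double points, the $A$-curves through the appropriate number of fixed points meet one another elsewhere and do not fibre $M$; there is no pencil along which to push loops.
\item Sweeping out alone does not give surjectivity of $\Gamma\to\pi_1(M)$. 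You would need a genuine fibration-type structure on $M$ whose fibres have fundamental group coming from the end, with control of reducible members (e.g.\ exceptional spheres lying inside $M$).
\end{itemize}
One way to close the gap along your lines would be to run the curve argument on the capped infinite cover. Compactness there would come from bounded geometry and monotonicity, since the relevant curves pass through a point of the cap and have bounded area. Then show the evaluation map is onto, which forces the cover to be compact, hence of finite index, hence index $1$ by Theorem~\ref{16}. But this requires existence of $A$-curves through generic points, i.e.\ a nonvanishing count, on a noncompact manifold, and you do not supply it.

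For comparison, the paper gives no independent argument. It asserts that LeBrun's proof of his Proposition~3.4 in \cite{LeBrun100} uses only the symplectic capping-off and four-dimensional symplectic topology. It therefore claims that proof transfers verbatim once the decay of $\nabla^{\mathrm{LC}}J$ makes $\omega$ standard at infinity. To finish, you must either reproduce that argument or actually prove the surjectivity you identify as the core.
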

\begin{theorem}[Penrose inequality]\label{Penrose} Let $(M,g,J)$ be an almost K\"ahler AE manifold of dimension $4$ and of order $\tau>1$. If $s\geq 0$ and as $r\longrightarrow\infty$, $|\nabla^{\mathrm{LC}}J|^2=O(r^{-\eta})$ with $\eta>4$, then $(M,J)$ carries a pseudoholomorphic curve $D$ that is expressed as a sum $\sum n_i D_i$ of compact hypersurfaces of real codimension $2$ with positive integer coefficients, with the property that $\bigcup_i D_i\neq \emptyset$ whenever $(M,J)$ is not diffeomorphic to $\mathbb{R}^4$. In terms of the curve, the mass of the manifold then satisfies 
\begin{align*}
    \mathfrak{m}(M,g)\geq \frac{1}{3\pi}\sum_i n_i \,\mathrm{vol}(D_i)
\end{align*}
and equality holds if and only if $(M,g,J)$ is scalar-flat K\"ahler.
\end{theorem}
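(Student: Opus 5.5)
We follow LeBrun's strategy in \cite{LeBrun100}, replacing the holomorphic arguments by pseudoholomorphic ones, and feed the result into Theorem \ref{Mass formula}. Specialising Theorem \ref{Mass formula} to $m=2$ gives
\begin{align*}
\mathfrak{m}(M,g)=\frac{1}{12\pi^2}\int_M s_H\, d\mathrm{v}_g-\frac{1}{3\pi}\big\langle \iota^{-1}c_1(M,J),[\omega]\big\rangle,
\end{align*}
where $s_H=s+\frac14|\nabla^{\mathrm{LC}}J|^2\ge s\ge 0$. Both terms in $s_H$ are integrable, since $s\in L^1$ and $|\nabla^{\mathrm{LC}}J|^2=O(r^{-\eta})$ with $\eta>4$. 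The whole task therefore reduces to representing $-\iota^{-1}c_1$ by an effective pseudoholomorphic divisor $D=\sum n_iD_i$, that is, to finding a class Poincar\'e dual to an anticanonical-type curve. Once that is done, $-\langle\iota^{-1}c_1,[\omega]\rangle=\sum n_i\int_{D_i}\omega=\sum n_i\,\mathrm{vol}(D_i)$, because $\omega$ restricts to the area form on $J$-curves (Wirtinger). The inequality then follows because the first term is nonnegative.

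\textbf{Step 1: compactification.} Since $M$ is AE, the end is $\mathbb{R}^4\setminus B_R$ with $\Gamma$ trivial. The decay hypotheses let us deform $(\omega,J)$ near infinity to the standard flat structure; this is the same input as in Theorems \ref{16}--\ref{18}. We then compactify symplectically by adding a line at infinity $L\cong\mathbb{CP}^1$, obtaining a closed symplectic $4$-manifold $(X,\omega_X)$ containing a symplectic sphere $L$ with $L\cdot L=+1$. By McDuff's theorem, $X$ is rational: it is $\mathbb{CP}^2$ blown up $k$ times, and $M=X\setminus L$. In particular $M\cong\mathbb{R}^4$ exactly when $k=0$.

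\textbf{Step 2: the curve $D$.} This is the main obstacle. In $X$ we have $c_1(X)=3[L]^{PD}+\sum_j(\text{exceptional corrections})$. On $M$ the class $\iota^{-1}c_1(M)$ is the compactly supported class whose image is $c_1$ restricted away from $L$. Equivalently, $-\iota^{-1}c_1(M)$ is dual to a combination of the exceptional-type classes $E_j$ together with classes of the form $L-E_a-E_b-\dots$ that avoid $L$; concretely, it is the part of $-K_X-3L$ supported in $M$. The first thing we would try is to show this class is represented by a nodal $J$-holomorphic curve for a compatible $J_X$ agreeing with $J$ on $M$ (extended near $L$ so that $L$ is $J_X$-holomorphic). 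The tools are Taubes' SW$=$Gr, or Gromov--McDuff theory of exceptional classes: the classes $E_j$ have nonzero Gromov invariant, so for every tamed $J$ they are represented by $J$-curves, possibly reducible, each component an embedded sphere or multiple cover with positive coefficients. The resulting components $D_i$ are disjoint from $L$ by positivity of intersections, since $E_j\cdot L=0$ and $L$ is $J_X$-holomorphic. Hence they lie in $M$, are compact, and have real codimension $2$. The sum $D$ is nonempty precisely when $k>0$, i.e.\ when $M\not\cong\mathbb{R}^4$. The technical danger is the bookkeeping needed to show $-\iota^{-1}c_1$ equals the sum of such curve classes with nonnegative integer coefficients; we expect to follow LeBrun's identification of $-\iota^{-1}c_1$ with an effective combination of the exceptional configuration.

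\textbf{Step 3: equality.} If equality holds, then $\int_M s_H=0$. Since $s_H\ge0$ pointwise, $s_H\equiv0$. Together with $s\ge 0$ this forces both $s\equiv0$ and $|\nabla^{\mathrm{LC}}J|^2\equiv 0$, so $J$ is parallel and $(M,g,J)$ is scalar-flat K\"ahler. Conversely, if $(M,g,J)$ is scalar-flat K\"ahler, then $s_H=0$ and the mass formula gives equality.
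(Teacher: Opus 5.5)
Your route is the paper's: the $m=2$ mass formula, LeBrun's capping-off by $\mathbb{CP}^2$ minus a ball, McDuff's theorem, pseudoholomorphic representatives of the exceptional classes, Wirtinger, and the same equality argument as your Step 3. Step 2, however, has a genuine gap.

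\textbf{The curves are not yet curves in $(M,J)$.} Positivity of intersections with $L$ only shows that the components of your $J_X$-curves lie in $X\setminus L$. That set is diffeomorphic to $M$, but it is not $(M,\omega,J)$. In Step 1 you deformed $(\omega,J)$ near infinity and glued in a piece of $\mathbb{CP}^2$, so on a collar of $L$ inside $X\setminus L$ you have $J_X\neq J$ and $\omega_X\neq\omega$. Indeed $\omega_X$ gives that collar finite volume, while the end of $(M,\omega)$ has infinite volume, so ``$J_X$ agreeing with $J$ on $M$'' cannot hold in your construction. A component entering the collar is therefore not a $J$-holomorphic curve of $(M,g,J)$, and the Wirtinger identity $\int_{D_i}\omega=\mathrm{vol}_g(D_i)$ is unavailable for it.

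What is missing is an argument confining every component to the region the compactification left unchanged. The paper flags exactly this point (``one needs to do the symplectic compactification a bit more carefully'') and takes it from LeBrun. One way to supply it:
\begin{itemize}
\item Leave $(\omega,J)$ untouched on $\{r\le\mathfrak{R}\}$, with $\mathfrak{R}$ arbitrarily large. The area $\int_{E_j}\omega$ of any curve in class $E_j$ does not depend on $\mathfrak{R}$.
\item A component that meets a fixed core $\{r\le\rho_0\}$ and reaches $r=\mathfrak{R}$ has area at least $c(\mathfrak{R}-\rho_0)^2$. This follows from monotonicity, which holds uniformly at scale comparable to $r$ on the asymptotically flat end. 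So for large $\mathfrak{R}$ no such component exists.
\item A component missing the core lies in $X\setminus(L\cup\{r\le\rho_0\})\cong S^3\times\mathbb{R}$. It is then null-homologous, contradicting positivity of its area.
\end{itemize}

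\textbf{The sign of $\iota^{-1}c_1$.} You defer the identification $-\iota^{-1}c_1=\sum_j[E_j]$ as ``bookkeeping'', but this is where the sign of the Penrose term comes from, and your heuristic gets it wrong. Classes $L-E_a-E_b-\cdots$ have intersection $1$ with $L$, so they do not lie in $H_2(M)$. Also $-K_X-3L=c_1(X)-3L=-\sum_jE_j$, which is $\iota^{-1}c_1$ rather than $-\iota^{-1}c_1$. Taken at face value this would give negative coefficients and reverse the inequality.

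The correct computation is short, and it is the one the paper uses. $H_2(M;\mathbb{Z})$ is freely generated by the $E_j$ with $E_i\cdot E_j=-\delta_{ij}$. Adjunction gives $\langle c_1,E_j\rangle=E_j\cdot E_j+2=1$. Writing $\iota^{-1}c_1=\sum_ia_iE_i$ under $H^2_c(M)\cong H_2(M)$ and pairing with $E_j$ gives $a_j=-1$.

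With these two points repaired, the rest of your argument goes through: nonvanishing Gromov invariants of exceptional classes (in place of the paper's appeal to Taubes), then Wirtinger and Step 3.
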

This immediately gives us the following.
\begin{theorem}[positive mass theorem]\label{PMT}
    Let $(M,g,J)$ be an almost K\"ahler AE manifold of dimension $4$ and of order $\tau>1$. If $s\geq 0$ and as $r\longrightarrow\infty$, $|\nabla^{\mathrm{LC}}J|^2=O(r^{-\eta})$ with $\eta>4$, then $\mathfrak{m}(M,g)\geq 0$, with equality iff $(M,g)$ is Euclidean space.
\end{theorem}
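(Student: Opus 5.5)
The positive mass theorem (Theorem \ref{PMT}) follows from the Penrose inequality, Theorem \ref{Penrose}, together with a short rigidity argument for the equality case. The hypotheses are identical: $(M,g,J)$ is almost K\"ahler AE of dimension $4$ and order $\tau>1$, with $s\geq 0$ and $|\nabla^{\mathrm{LC}}J|^2=O(r^{-\eta})$ for some $\eta>4$. Theorem \ref{Penrose} therefore produces a pseudoholomorphic curve $D=\sum n_iD_i$ with $n_i>0$ such that
\[
\mathfrak{m}(M,g)\geq \frac{1}{3\pi}\sum_i n_i\,\mathrm{vol}(D_i)\geq 0 .
\]
Since each volume is nonnegative and each $n_i$ is positive, this gives $\mathfrak{m}(M,g)\geq 0$ immediately.

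For the equality case, assume $\mathfrak{m}(M,g)=0$. The right-hand side of the Penrose inequality is then forced to vanish. Each $D_i$ is a nonempty compact real surface, so it has positive area; hence $\bigcup_i D_i=\emptyset$. By the dichotomy in Theorem \ref{Penrose}, this means $M$ is diffeomorphic to $\mathbb{R}^4$. Moreover, $0=\mathfrak{m}=\frac{1}{3\pi}\sum n_i\mathrm{vol}(D_i)$ is the equality case of the Penrose inequality, so $(M,g,J)$ is scalar-flat K\"ahler.

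It remains to show that a scalar-flat K\"ahler AE metric with zero mass is flat. Since $J$ is parallel and the canonical bundle is trivial on $M\cong\mathbb{R}^4$, I would pass to the spin$^{\mathbb C}$ (in fact spin, since $c_1=0$ and $M$ is contractible) Witten argument that underlies Theorem \ref{Mass formula}. First solve the Dirac equation $D\psi=0$ for a spinor $\psi$ asymptotic to a constant spinor at infinity. The Weitzenb\"ock--Lichnerowicz identity, with $s=0$, then expresses the mass as a positive multiple of $\int_M|\nabla\psi|^2$. Vanishing mass thus forces $\nabla\psi=0$, and running this over a basis of constant spinors at infinity yields a full basis of parallel spinors. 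The curvature therefore vanishes, and a complete flat manifold diffeomorphic to $\mathbb{R}^4$ is isometric to Euclidean space.

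An alternative to the spinor step is to use the Hein--LeBrun formula with $c_1=0$ (true on $\mathbb{R}^4$). This gives $\int s=0$ and $s=0$ pointwise, but that is not quite flatness; to conclude one needs either the Witten argument or the classical AE positive mass rigidity (Schoen--Yau/Witten on the spin manifold $\mathbb{R}^4$). Since $M\cong\mathbb{R}^4$ is spin and $s\geq0$, the classical rigidity statement applies directly, and this is the cleanest route.

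Conversely, Euclidean space has zero mass, so equality holds exactly in that case. The only real content beyond Theorem \ref{Penrose} is this rigidity step, and it is the one point I would check carefully.
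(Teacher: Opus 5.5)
Your proposal is correct. The inequality is handled exactly as in the paper: you read $\mathfrak{m}\geq 0$ off Theorem \ref{Penrose}. The equality case takes a different route.

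The paper uses the explicit identity from the proof of Theorem \ref{Penrose}, namely $\mathfrak{m}(M,g)=\frac{1}{3\pi}\sum_i n_i\,\mathrm{vol}(D_i)+\frac{1}{12\pi^2}\int_M\big(s+\frac14|\nabla^{\mathrm{LC}}J|^2\big)\,d\mathrm{v}_g$. From this it sees that $\mathfrak{m}=0$ forces $\nabla^{\mathrm{LC}}J\equiv0$. It then hands the Kähler case to LeBrun's Kähler positive mass theorem (Corollary~3.6 of \cite{LeBrun100}).

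You instead use the topological half of Theorem \ref{Penrose}. Zero mass empties the curve, so $M\cong\mathbb{R}^4$, which is spin, and Witten's classical rigidity finishes. With this route your intermediate conclusion that $(M,g,J)$ is scalar-flat Kähler is not actually needed.

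The paper's route stays inside the (almost) Kähler framework and delegates rigidity to an existing Kähler result. Yours needs no further Kähler or symplectic input beyond Theorem \ref{Penrose}, only the standard positive mass rigidity.

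Your route also makes one point visible. In dimension $4$ the Schoen--Yau theorem needs no spin hypothesis. So the classical positive mass theorem alone gives the whole statement, inequality and rigidity, for any AE $4$-manifold with $s\geq0$, order $\tau>1$ and $s\in L^1$ (which is needed anyway for $\mathfrak{m}$ to be defined). The almost Kähler hypotheses are really only needed for the Penrose-type refinement, not for positivity itself.
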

Almost K\"ahler is another way of saying that the Riemannian manifold admits a symplectic form which is compatible with metric. Therefore, there are plenty of examples of almost K\"ahler ALE manifolds which are not K\"ahler. One way of constructing them is to make compactly supported deformations to K\"ahler ALE manifolds which exist plenty in the literature, e.g., see \cite{Kronheimer1}, \cite{Counter}, \cite{Lock}. We give a brief proof of this construction in the next section.
\begin{theorem}\label{lemma existence}
Let $(M,g_0,J_0)$ be a K\"ahler manifold. Let's say $\omega_0$ is the K\"ahler form. Then there exists a smooth compactly supported deformation $J_t$ of $J_0$ such that for all sufficiently small $t\neq0$:
\begin{enumerate}
\item $J_t$ is $\omega_0$-compatible,
\item $J_t$ is not integrable.
\end{enumerate}
Consequently $(M,g_t,J_t)$ is strictly almost K\"ahler, where $g_t(u,v):=\omega_0(u,J_t v)$.
\end{theorem}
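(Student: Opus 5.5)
The idea is to use the standard parametrization of $\omega_0$-compatible almost complex structures by symmetric endomorphisms, as in the contractibility argument for the space of compatible structures. Fix a point $p\in M$ and a small coordinate ball $U\ni p$ with compact closure. Any $\omega_0$-compatible $J$ near $J_0$ can be written as
\begin{align*}
J = J_0\,(1+A)(1-A)^{-1}=(1-A)J_0(1-A)^{-1}\cdot(\text{const}),
\end{align*}
or more concretely $J=e^{S}J_0e^{-S}$. Here $S$ is a $g_0$-symmetric endomorphism that anticommutes with $J_0$, i.e.\ $SJ_0=-J_0S$. Conjugation by $e^{S}$ preserves $J^2=-1$. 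The anticommutation gives $e^{S}J_0e^{-S}=e^{2S}J_0$. With $S$ symmetric and anti-$J_0$-linear, one checks that $\omega_0(u,Jv)$ is symmetric and positive definite, because $g_0(u,e^{2S}v)$ is. Pointwise, such $S$ correspond to sections of a vector bundle, isomorphic to $\mathrm{Sym}^2$ of the $(0,1)$-part, i.e.\ anti-$J_0$-linear symmetric endomorphisms.

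I will therefore take $S=\chi\, S_1$, where $\chi$ is a cutoff supported in $U$ and $S_1$ is a nonzero section of this bundle over $U$. I then set $J_t=e^{tS}J_0e^{-tS}$. This family is smooth, equals $J_0$ outside $\operatorname{supp}\chi$, and is $\omega_0$-compatible for all $t$, which gives (1). The metric $g_t(u,v)=\omega_0(u,J_tv)$ is positive definite, and $(M,g_t,J_t)$ is almost K\"ahler because $\omega_0$ is closed and unchanged.

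For (2), I compute the Nijenhuis tensor to first order in $t$. Since $N_{J_0}=0$,
\begin{align*}
N_{J_t}=t\,\dot N+O(t^2),
\end{align*}
where $\dot N$ is the linearized Nijenhuis tensor at $J_0$ in the direction $\dot J=2SJ_0$. Under the identification of $\dot J$ with a $(0,1)$-form valued in $T^{1,0}$, this linearization is, up to a constant, $\bar\partial\dot J$ (the Kodaira--Spencer type operator). It therefore suffices to choose $S_1$ with $\bar\partial(S J_0)\neq 0$ at some point, for then $N_{J_t}\neq0$ at that point for all sufficiently small $t\neq0$.

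The main point needing care is to exhibit such an $S$ in local holomorphic coordinates $z_1,\dots,z_m$ on $U$. I would take $\dot J$ to correspond to $\beta=f\,d\bar z_1\otimes\partial_{z_1}$, or a symmetrized version such as $f(d\bar z_1\otimes\partial_{z_2}+d\bar z_2\otimes\partial_{z_1})$ in flat-like coordinates. The form must be symmetric with respect to $\omega_0$, i.e.\ $\omega_0$-compatibility imposes a symmetry of $\beta$ after lowering an index with $g_0$. In coordinates where $g_0$ is Euclidean at $p$, a symmetric choice is $\beta=f\,d\bar z_1\otimes\partial_{z_1}$, and $\bar\partial\beta=\partial_{\bar z_2}f\,d\bar z_2\wedge d\bar z_1\otimes\partial_{z_1}$. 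This is nonzero wherever $\partial_{\bar z_2}f\neq0$, using $m\ge2$, which holds in our setting. The term from the variation of $g_0$ away from $p$ only enters at higher order in the distance to $p$, so it does not affect nonvanishing at a point where I choose $f=\chi\cdot\bar z_2$ with $\chi\equiv1$ near $p$. Finally, I note that the non-integrability produced in this way holds on an open set, so $g_t$ is not K\"ahler for any small $t\neq0$, and $J_t$ is strictly almost K\"ahler.
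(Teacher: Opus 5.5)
Your construction is the paper's: $J_t=e^{tA}J_0e^{-tA}=J_0e^{-2tA}$ with $A$ compactly supported, $g_0$-symmetric and $J_0$-anti-linear. Then $g_t(u,v)=g_0(e^{-2tA}u,v)$ is positive definite for every $t$. The two routes differ at the non-integrability step, and there yours is the one that works.

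The paper takes $A=\mathrm{diag}(f,-f,0,\dots,0)$ in the frame $\partial_{x_1},\partial_{y_1},\dots$. It then evaluates the first-order Nijenhuis tensor on $e_1\pm J_0e_1$ (its $\partial_{z_1},\partial_{\bar z_1}$) and relies on $\partial_{z_1}f(0)\neq0$. But both vectors lie in $\mathrm{span}(e_1,J_0e_1)$, which is $J_t$-invariant. Since $N_J$ is tensorial and $N_J(X,JX)=0$, that component vanishes identically in $t$. In the paper's expansion the correct values are $AJ_0\partial_{z_1}=-f\partial_{z_1}$ and $AJ_0\partial_{\bar z_1}=f\partial_{\bar z_1}$; these were swapped, and once corrected the six terms cancel. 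Tellingly, that argument never uses $m\geq 2$, even though every $J$ is integrable when $m=1$.

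The real obstruction is a derivative of $f$ transverse to the deformed complex line. That is exactly what your Kodaira--Spencer formulation detects, via $\bar\partial\beta\propto\partial_{\bar z_2}f\,d\bar z_2\wedge d\bar z_1\otimes\partial_{z_1}$, and it is where $m\geq2$ enters.

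One step needs tightening; the paper glosses over the same point by calling the coordinate frame orthonormal. The tensor $f\,d\bar z_1\otimes\partial_{z_1}$ is $g_0$-symmetric only where $g_{1\bar k}=0$ for $k\neq 1$, not on all of $U$. Instead, raise an index of $f\,d\bar z_1\otimes d\bar z_1\in\Gamma(\mathrm{Sym}^2\Lambda^{0,1})$ with $g_0$, giving $\beta=f\,d\bar z_1\otimes\sum_k g^{k\bar 1}\partial_{z_k}$. This is exactly symmetric and compactly supported. Because $f(p)=0$ for $f=\chi\bar z_2$, you get $\bar\partial\beta(p)=d\bar z_2\wedge d\bar z_1\otimes\sum_k g^{k\bar 1}(p)\partial_{z_k}\neq 0$, with no need for normal coordinates or ``higher order'' terms.

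Finally, drop the garbled Cayley-type formula in your first display; nothing uses it.
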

Under the extra assumption of the almost K\"ahler metric being Einstein, this is in the realm of the so-called Goldberg conjecture \cite{Goldberg}: \textit{every compact almost-K\"ahler Einstein manifold is actually K\"ahler}.  It should be emphasized that, although Goldberg did not clearly stipulate that the manifolds in question are required to be compact, this is taken to be the case. In fact one can construct counter-examples to the conjecture in the non-compact case in all dimensions $\geq 4$ using Tod's ansatz \cite{Armstrong}. In the compact case, Sekigawa \cite{Seikigawa1, Sekigawa2} has shown that the conjecture is actually true for metrics with $s\geq 0$.\par 
We show that Sekigawa's proof \cite{Sekigawa2} actually goes through in the non-compact case under suitable decay properties of the metric and $\nabla^{\mathrm{LC}}J$ at ``\textit{infinity}" yielding the following. 
\begin{theorem}\label{dim >4}
    Let $(M,g,J)$ be an almost-K\"ahler Einstein ALE manifold of dimension $n\geq 4$ and of order $\tau>\frac{2}{3}(n-4)$ with $s\geq 0$ and as $r\longrightarrow\infty$, $|(\nabla^{\mathrm{LC}})^2J|=O (r^{-\eta})$ with $\eta\geq \frac{2}{3}(n-1)$. Then $(M,g,J)$ is K\"ahler Einstein.
\end{theorem}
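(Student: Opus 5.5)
We will adapt Sekigawa's integral-formula argument for compact almost K\"ahler Einstein manifolds with $s\geq 0$ to the ALE setting. The only genuinely new point is justifying the integrations by parts, which is where the decay hypotheses on $g$ and on $(\nabla^{\mathrm{LC}})^2J$ enter.

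\emph{Step 1: the Weitzenb\"ock identity.} Sekigawa's argument starts from the Weitzenb\"ock formula for the closed, coclosed $2$-form $\omega$; it is harmonic because $d\omega=0$ and $\delta\omega=0$ on an almost K\"ahler manifold. Combining it with the Einstein condition $\mathrm{Ric}=\frac{s}{n}g$ and the curvature identities of the almost K\"ahler structure, he obtains a pointwise identity of the form
\begin{align*}
\Delta\big(|\nabla^{\mathrm{LC}} J|^2\big) + \mathrm{div}(X)= -A\,|(\nabla^{\mathrm{LC}})^2J|^2_{\text{(suitable part)}} - B\,|\nabla^{\mathrm{LC}} J|^4 - C\, s\,|\nabla^{\mathrm{LC}} J|^2 + Q ,
\end{align*}
with constants $A,B,C>0$. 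Here $X$ is a vector field built from $\nabla^{\mathrm{LC}}J\otimes(\nabla^{\mathrm{LC}})^2J$ (of the form $J\nabla^2 J$-type contractions), and $Q$ is a quadratic expression in $\nabla^{\mathrm{LC}}J$ and the curvature which Sekigawa shows is nonpositive, or which can be absorbed, once $s\geq 0$. I will take this identity verbatim from \cite{Sekigawa2}. It is purely local, so it holds on $M$ without change.

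\emph{Step 2: integrate over $M_r=K\cup\{|x|\le r\}$.} Integrating the identity over $M_r$ turns the left-hand side into boundary integrals over $S_r/\Gamma$ of $\partial_\nu|\nabla^{\mathrm{LC}}J|^2$ and $\langle X,\nu\rangle$. Both integrands are bounded by $C|\nabla^{\mathrm{LC}}J|\,|(\nabla^{\mathrm{LC}})^2J|$, and the area of $S_r/\Gamma$ is $O(r^{n-1})$. We need these boundary terms to tend to $0$ along $r\to\infty$.
\begin{itemize}
\item The second-derivative hypothesis gives $|(\nabla^{\mathrm{LC}})^2J|=O(r^{-\eta})$.
\item For the first derivative, integrate $(\nabla^{\mathrm{LC}})^2J$ radially from infinity. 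This is legitimate because $\nabla^{\mathrm{LC}}J\to 0$ at infinity: the ALE structure lets us compare $J$ with a limiting constant structure, using $|\nabla^{\mathrm{LC}}J|^2=2(s^*-s)$ together with curvature decay $O(r^{-\tau-2})$. This yields $|\nabla^{\mathrm{LC}}J|=O(r^{1-\eta})$ whenever $\eta>1$, and also $|\nabla^{\mathrm{LC}}J|=O(r^{-(\tau+2)/2})$ from the curvature.
\item Combining the two, the boundary term is $O\big(r^{n-1}\cdot r^{-\eta}\cdot \min(r^{1-\eta},r^{-(\tau+2)/2})\big)$.
\end{itemize}
The exponent thresholds $\tau>\frac23(n-4)$ and $\eta\ge\frac23(n-1)$ should be exactly what forces this to be $o(1)$, possibly after choosing a sequence $r_k\to\infty$ and interpolating between the two bounds. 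Checking this bookkeeping is routine but is the step I expect to be the main obstacle: one has to identify precisely which combination of bounds Sekigawa's boundary vector field requires.

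\emph{Step 3: integrability and conclusion.} We must also ensure the right-hand side terms are integrable, so that monotone convergence applies. Since they all have the same sign, this is automatic once the boundary terms vanish: we obtain
\begin{align*}
\int_M\Big(A|\cdot|^2+B|\nabla^{\mathrm{LC}}J|^4+C s|\nabla^{\mathrm{LC}}J|^2 - Q\Big)\,d\mathrm{v}_g\le 0 ,
\end{align*}
with every term nonnegative by $s\ge 0$. Hence $|\nabla^{\mathrm{LC}}J|^4\equiv 0$, so $\nabla^{\mathrm{LC}}J=0$ and $J$ is integrable. Therefore $(M,g,J)$ is K\"ahler, and since it was already Einstein it is K\"ahler--Einstein. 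If in Sekigawa's formula the leading term is instead $s|\nabla^{\mathrm{LC}}J|^2$ with no quartic term, the case $s>0$ concludes the same way. When $s=0$, the vanishing of the second-derivative term forces $\nabla^{\mathrm{LC}}J$ to be parallel; its norm is then constant and decays at infinity, so $\nabla^{\mathrm{LC}}J=0$.
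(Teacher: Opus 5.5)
This is essentially the paper's argument: the paper uses Sekigawa's identity in the Apostolov--Dr\u{a}ghici--Moroianu form $8\delta\langle\rho^*,\nabla^{\mathrm{LC}}\omega\rangle-\Delta|\nabla^{\mathrm{LC}}\omega|^2=8|W''|^2+4|(\rho^*)''|^2+|((\nabla^{\mathrm{LC}})^*\nabla^{\mathrm{LC}}\omega)'|^2+|\phi|^2+\frac{s}{n}|\nabla^{\mathrm{LC}}\omega|^2$, whose right side is nonnegative for $s\geq 0$ and contains your quartic term as $|\phi|^2\geq c_n|\nabla^{\mathrm{LC}}\omega|^4$, so the hedged fallback in your last paragraph is never needed (its $s=0$ case would not follow from the vanishing of only part of the second derivative, and an Einstein ALE metric has $s\equiv 0$ anyway). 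The bookkeeping you left open closes at once with your bound $|\nabla^{\mathrm{LC}}J|=O(r^{-(\tau+2)/2})$, since $n-1-\eta-\frac{\tau+2}{2}<0$ is equivalent to $\eta+\frac{\tau}{2}>n-2$, which follows from $\eta\geq\frac23(n-1)$ and $\tau>\frac23(n-4)$; the paper instead bounds the $\rho^*$ boundary term by $|R|\,|\nabla^{\mathrm{LC}}\omega|=O(r^{-(3\tau/2+3)})$, whereas your $|\nabla^{\mathrm{LC}}J|\,|(\nabla^{\mathrm{LC}})^2J|$ bound for that term needs the Einstein Weitzenb\"ock formula for the harmonic form $\omega$ to trade $(\rho^*)''$ for $\big((\nabla^{\mathrm{LC}})^*\nabla^{\mathrm{LC}}\omega\big)''$.
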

In dimension $4$, LeBrun \cite{LeBrun} has recently established a stronger statement in the compact case. His result says that \textit{any compact almost K\"ahler $4$-manifold $(M,g,J)$ with $s\geq 0$ and $\delta W_+=0$ is actually a constant-scalar-curvature K\"ahler manifold}, where $\delta$ denotes the divergence operator and $W_+$ is the self-dual Weyl curvature.  When $g$ is an Einstein metric, this recovers Sekigawa's \cite{Seikigawa1} partial solution to the $4$-dimensional Goldberg conjecture. In the noncompact setting, LeBrun's proof carries over under appropriate decay conditions, leading to the following theorem.
\begin{theorem}\label{theorem 2}
    Let $(M,g,J)$ be a $4$-dimensional almost-K\"ahler ALE manifold of order $\tau>0$. If $s\geq 0$ and $\delta W_+=0$, then $(M,g,J)$ is a constant-scalar-curvature K\"ahler (cscK) manifold.
\end{theorem}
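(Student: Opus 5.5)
The plan is to adapt LeBrun's Weitzenböck argument for compact almost Kähler $4$-manifolds with $\delta W_+=0$ to the ALE setting. The cut-off errors should be absorbed using the decay of the curvature. On an almost Kähler $4$-manifold, $\omega$ is a self-dual harmonic $2$-form with $|\omega|=\sqrt2$. The condition $\delta W_+=0$ is equivalent, via the second Bianchi identity, to $W_+$ being harmonic as a section of $\mathrm{End}(\Lambda^+)$. I would use the Weitzenböck formula for $W_+$, in LeBrun's form, evaluated against $\omega\otimes\omega$. Set $w:=W_+(\omega,\omega)$, which equals $\tfrac{s^*}{2}-\tfrac{s}{6}$ up to normalisation, or equivalently is controlled by $s$ and $|\nabla\omega|^2$. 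Harmonicity of $W_+$ then yields a pointwise identity of the schematic form
\begin{align*}
\Delta w + c_1\, s\, w + c_2\,|\nabla\omega|^2\, w + Q = 0 ,
\end{align*}
where $Q$ is a sum of nonnegative squares: $|\nabla\nabla\omega|^2$-type terms together with $|W_+|^2|\omega|^2 - \ldots$ type terms. In LeBrun's compact proof one multiplies by a suitable power of $w$, or of $s^*-s$, and integrates. Under $s\ge 0$ every term then has a sign, which forces $\nabla\omega=0$ and $s$ constant.

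In the noncompact case the main steps are as follows. First, I record the ALE decay: $W_+=O(r^{-2-\tau})$ and $\nabla W_+=O(r^{-3-\tau})$. Since $\omega$ is $g$-harmonic of constant length and $g$ is asymptotically flat, $\omega$ is asymptotic to a parallel self-dual form on $\mathbb R^4/\Gamma$, with $\nabla\omega=O(r^{-1-\tau})$. Second, I fix a cut-off $\chi_R$ equal to $1$ on $B_R$, supported in $B_{2R}$, with $|\nabla\chi_R|\le C/R$ and $|\Delta\chi_R|\le C/R^2$. I multiply the identity by $\chi_R$ (and by the chosen power of the relevant quantity) and integrate by parts, so that the Laplacian term becomes $\int w\,\Delta\chi_R$ together with gradient cross terms. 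Third, I show these boundary contributions vanish as $R\to\infty$. With $w=O(r^{-2-\tau})$ the main error is $\int_{B_{2R}\setminus B_R} |w|R^{-2}\lesssim R^{4-4-\tau}=R^{-\tau}\to 0$, and this works for any $\tau>0$, which is exactly the hypothesis. This gives $\int_M Q\le 0$ and hence $Q\equiv 0$ together with the vanishing of the other signed terms.

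Finally, I conclude as LeBrun does. The vanishing forces $s\,|\nabla\omega|^2\equiv 0$ and a pointwise identity, such as $W_+$ having the special Kähler form $\tfrac{s}{12}\,\mathrm{diag}(2,-1,-1)$ relative to $\omega$, from which $\nabla\omega=0$. So $J$ is integrable and $g$ is Kähler. For a Kähler surface, $\delta W_+=0$ is equivalent to $ds=0$, so $s$ is constant and $(M,g,J)$ is cscK. Since $s\to 0$ at infinity, in fact $s\equiv 0$.

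I expect the main obstacle to be the precise choice of test function in LeBrun's integral identity. If it involves a negative power of something like $s^*-s$, or a quotient that is singular where $\nabla\omega=0$, the integration by parts near infinity and near the zero set needs care. I would first try the version that integrates $\langle \Delta W_+, \omega\otimes\omega\rangle$ directly, with only polynomial weights, and check that each term has the decay used in the third step. If a singular weight is unavoidable, I would regularise it (for example replacing $u$ by $\sqrt{u^2+\epsilon}$) and let $\epsilon\to0$ after $R\to\infty$.
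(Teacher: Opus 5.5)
Your overall architecture matches the paper's (i.e.\ LeBrun's). You pair the Weitzenb\"ock formula $0=\nabla^*\nabla W_++\frac{s}{2}W_+-6W_+\circ W_++2|W_+|^2\mathrm{Id}$ with $\omega\otimes\omega$, integrate, and discard boundary terms using ALE decay. Your $O(R^{-\tau})$ cut-off estimate is exactly why $\tau>0$ suffices. Your last step is also fine: $\delta W_+=0$ is equivalent to $ds=0$ on a K\"ahler surface, and then $s\equiv 0$ because $s\to 0$. The gap is in the middle: the step that forces $\nabla\omega=0$ is missing, and what you put in its place does not work. Expanding $\Delta w=\Delta\langle W_+,\omega\otimes\omega\rangle$ pointwise produces the cross term $\langle\nabla W_+,\nabla(\omega\otimes\omega)\rangle$, which has no sign. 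It disappears only after integrating by parts twice, which moves $\nabla^*\nabla$ from $W_+$ onto $\omega\otimes\omega$. The argument involves no $|\nabla\nabla\omega|^2$ terms and no weights by powers of $w$ or $s^*-s$. Nor does ``every term have a sign'' when $s\geq 0$. The integrated equation the paper reaches is
\[
0=\int_M\Big(w^2-\frac{s}{2}\,w-2|W_+(\omega)|^2+4|W_+|^2\Big)\,d\mathrm{v}_g ,
\]
and here $-\frac{s}{2}w$ and $-2|W_+(\omega)|^2$ are nonpositive.

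The missing ingredients are LeBrun's algebraic identities:
\begin{enumerate}[label=(\alph*)]
\item $\langle W_+,\nabla^*\nabla(\omega\otimes\omega)\rangle=w^2+4|W_+(\omega)|^2-sw$. This converts the second-order term into curvature terms and yields the display above.
\item $4|W_+|^2-4|W_+(\omega)|^2+\frac{1}{2}w^2\geq 0$.
\item $w=\frac{1}{2}|\nabla\omega|^2+\frac{s}{3}$.
\end{enumerate}
From (a)--(c) the paper obtains $\int|W_+|^2\leq\int\frac{s^2}{24}-\frac{3}{32}\int|\nabla\omega|^4$. It plays this against the pointwise bound $|W_+|\geq s/(2\sqrt{6})$. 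That bound follows from $w\leq 2\sqrt{2/3}\,|W_+|$ (valid for trace-free $W_+$ with $|\omega|^2=2$) together with $w\geq s/3$. Squaring it is where $s\geq 0$ enters, and the comparison gives $\int|\nabla\omega|^4=0$ directly. Your suggested endpoint ``$s|\nabla\omega|^2\equiv 0$'' would not suffice on the set where $s$ vanishes.

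A smaller point concerns decay. Your claim $\nabla\omega=O(r^{-1-\tau})$ does not follow from harmonicity and constant length alone without a weighted elliptic argument. You do not need it: (c) gives $|\nabla\omega|=O(r^{-1-\tau/2})$. The corresponding boundary term $\int|W_+|\,|\nabla\omega|\,|\nabla\chi_R|=O(R^{-3\tau/2})$ still vanishes for $\tau>0$. The singular-weight regularisation you worried about never arises, since the test tensor is just $\omega\otimes\omega$.
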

Here $W_+$ denotes the self-dual Weyl curvature of the metric $g$. In particular when $g$ is Einstein we get
\begin{theorem}\label{dim = 4}
    Let $(M,g,J)$ be a $4$-dimensional almost-K\"ahler Einstein ALE manifold of order $\tau>0$ with $s\geq 0$. Then $(M,g,J)$ is K\"ahler Einstein.
\end{theorem}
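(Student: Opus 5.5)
Theorem \ref{dim = 4} will follow directly from Theorem \ref{theorem 2}; the work is to check that an Einstein metric satisfies its hypothesis $\delta W_+=0$. In dimension $4$ the contracted second Bianchi identity gives
\begin{align*}
\delta W_+ = -\tfrac{1}{2}\,\big(d^{\nabla}\mathring{r}\big)^{+}\ \text{(up to a term involving } ds\text{)},
\end{align*}
where $\mathring{r}$ is the trace-free Ricci tensor and the superscript denotes projection onto the $\Lambda^+$-valued part. More intrinsically, $\delta W_+$ is a fixed linear combination of the $\Lambda^+\otimes\Lambda^1$ component of the Cotton tensor $d^\nabla(\mathrm{Ric}-\tfrac{s}{6}g)$. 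For an Einstein metric, $\mathrm{Ric}=\tfrac{s}{4}g$ with $s$ constant (Schur's lemma, valid since $n=4\ge 3$). Hence $\mathring r=0$ and $ds=0$, so the Cotton tensor vanishes and $\delta W_+=0$.

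Next I check the remaining hypotheses of Theorem \ref{theorem 2}: $(M,g,J)$ is a $4$-dimensional almost-K\"ahler ALE manifold of order $\tau>0$ with $s\ge 0$. These are exactly what Theorem \ref{dim = 4} assumes. Theorem \ref{theorem 2} therefore shows that $J$ is integrable and $(M,g,J)$ is cscK. Since $g$ was assumed Einstein, $(M,g,J)$ is K\"ahler--Einstein.

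As a consistency check, ALE forces $s=O(r^{-2-\tau})\to 0$. Because $s$ is constant, this gives $s=0$, so the metric is Ricci-flat K\"ahler. In particular $s\ge0$ is automatic, and the conclusion is that $g$ is Ricci-flat K\"ahler.

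The only real content is the Bianchi identity relating $\delta W$ to the Cotton tensor, which is standard. No new analytic obstacle arises, since all the decay issues at infinity are handled inside Theorem \ref{theorem 2}. The step needing the most care is getting the normalization and sign in $\delta W_+ \propto (d^\nabla(\mathrm{Ric}-\tfrac{s}{6}g))^+$ right. Only its vanishing matters here, though, so the exact constants are irrelevant.
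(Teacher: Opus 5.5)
Your proposal is correct and follows the paper's route: the paper derives this theorem as the Einstein special case of Theorem~\ref{theorem 2}, using only the fact that an Einstein metric has vanishing Cotton tensor and hence $\delta W_+=0$. Your observation that a constant $s$ with ALE decay must vanish agrees with Remark~\ref{r1}, and it gets there more directly, without Bonnet--Myers or an extra $L^1$ assumption.
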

This discussion is particularly relevant in the Ricci-flat case, namely for \textit{gravitational instantons}. A classification of $4$-dimensional hyper-K\"ahler ALE $4$-manifolds was given by Kronheimer \cite{Kronheimer1,Kronheimer2}. A classification of K\"ahler ALE spaces \cite{Suviana, Wright} are also known to be the hyper-K\"ahler ones and finite free quotients of them. A conjecture by Bando--Kasue--Nakajima \cite{BKN} says that \textit{all $4$-dimensional Ricci-flat manifolds with maximal volume growth and curvature in $L^2$ are K\"ahler}. In other words, \textit{all simply connected $4$-dimensional Ricci-flat manifolds with maximal volume growth and curvature in $L^2$ are hyper-K\"ahler}. Maximal volume growth and curvature in $L^2$ in dimension $4$ implies the metric is ALE of order $4$ \cite{BKN}.\par 
Nakajima \cite{Nakajima1} showed that the conjecture is true for the spin case if $\Gamma\subseteq SU(2)$. Theorem \ref{dim = 4} provides an improvement on the current state of the conjecture in the following sense.
\begin{theorem}\label{important}
    A $4$-dimensional Ricci-flat almost K\"ahler manifold with maximal volume growth and curvature in $L^2$ is K\"ahler.
\end{theorem}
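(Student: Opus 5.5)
The plan is to reduce the statement to Theorem \ref{dim = 4}. Let $(M,g,J)$ be a complete $4$-dimensional Ricci-flat almost K\"ahler manifold with maximal volume growth and $\int_M |\mathrm{Rm}|^2\,d\mathrm{v}_g<\infty$. First, invoke the structure theorem of Bando--Kasue--Nakajima \cite{BKN}. In dimension $4$, Ricci-flatness, Euclidean volume growth and $L^2$ curvature force each end of $M$ to be ALE of order $4$. That is, outside a compact set each end is diffeomorphic to $(\mathbb{R}^4\setminus \overline{B_R(0)})/\Gamma_i$ with $\Gamma_i\subset O(4)$ finite and acting freely, and there are coordinates in which $g_{ij}-\delta_{ij}=O(r^{-4})$, with the corresponding derivative bounds $O(r^{-4-p})$. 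Since $g$ is Ricci-flat, these harmonic-type coordinates can be upgraded to all orders by standard elliptic regularity for the Einstein equation. So $g$ is ALE of order $\tau=4>0$ in the sense of Definition \ref{ALE}, except possibly for having several ends.

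Second, deal with the number of ends. Theorem \ref{dim = 4} is stated for the single-end Definition \ref{ALE}. One could appeal to Theorem \ref{16} here, but it is simpler to avoid it. A Ricci-flat complete manifold with two or more ends would contain a line, and the Cheeger--Gromoll splitting theorem would then make it a product $\mathbb{R}\times N$. That is incompatible with maximal volume growth and ALE asymptotics with finite $\Gamma_i$, unless the manifold is flat $\mathbb{R}^4$, which has one end anyway. Hence $M$ has exactly one end, and $(M,g)$ is ALE of order $4$ in the sense of Definition \ref{ALE}.

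Third, apply Theorem \ref{dim = 4}. The metric is Einstein with $s\equiv 0\ge 0$, it is almost K\"ahler by assumption, and it is ALE of order $\tau=4>0$. Theorem \ref{dim = 4} therefore gives that $(M,g,J)$ is K\"ahler--Einstein, in particular K\"ahler, which is the claim. Alternatively, one can go through Theorem \ref{theorem 2}: Ricci-flatness gives $\delta W_+=0$ by the contracted second Bianchi identity, since $\delta W_+$ is a multiple of the Cotton tensor, which vanishes for Einstein metrics. Then $s\geq 0$ yields the cscK conclusion.

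The main obstacle is the first step. One must check that the BKN asymptotics deliver exactly the decay regularity needed by Definition \ref{ALE}, namely control of all derivatives of $a_{ij}$, and that the integration-by-parts arguments behind Theorem \ref{dim = 4} are justified. Those arguments need decay of $J$-dependent quantities such as $\nabla^{\mathrm{LC}}J$, and Ricci-flatness alone does not immediately give this. My first attempt would be to use the Weitzenb\"ock formula for the self-dual harmonic form $\omega$. On an Einstein manifold, $\omega$ closed and of constant length $\sqrt2$ is self-dual and harmonic, and $L^2$-harmonic theory on the ALE end then gives decay of $\nabla\omega$ at a rate sufficient for the boundary terms to vanish.
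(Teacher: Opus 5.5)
Your proposal is correct and follows the paper's route: Bando--Kasue--Nakajima gives an ALE structure of order $4$, and Theorem \ref{dim = 4} then gives the K\"ahler conclusion (that theorem comes from Theorem \ref{theorem 2}, since Einstein metrics have $\delta W_+=0$); your Cheeger--Gromoll argument that there is a single end is valid and fills in a point the paper leaves implicit (with two ends the split factor $N$ is compact, which forces linear volume growth). The ``main obstacle'' you raise is not one, and no $L^2$-harmonic theory is needed: the Weitzenb\"ock identity \eqref{400} gives pointwise $|\nabla^{\mathrm{LC}}\omega|^2=2W_+(\omega,\omega)-\frac{2s}{3}=O(r^{-6})$, which is exactly how the proof of Theorem \ref{theorem 2} controls the $J$-dependent boundary terms.
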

\begin{remark}\label{r1}
The Einstein assumption gives us Ric$=\frac{s}{n}g$. Hence by Bonnet--Myers an ALE Einstein metric can not have positive scalar curvature. So, only the Ricci-flat case in theorems \ref{dim >4} and \ref{dim = 4} are not vacuous. Moreover with the extra assumption: $s\in L^1$, it also cancels out the negative Einstein constant case.\par 
It is worth clarifying the extent to which theorem \ref{important} provides new information. For $\Gamma\subset SU(2)$, the symplectic fillings of $S^3/\Gamma$ with respect to the standard contact structure are diffeomorphic to the blow-ups of the minimal resolution of $\mathbb{C}^2/\Gamma$ (\cite{McDuff, OO-1, OO-2}). But the minimal resolution saturates the ALE Hitchin--Thorpe inequality \cite{Nakajima1} and each blow up violates the inequality. Consequently no non-trivial blow-up can admit a Ricci-flat ALE metric. Hence, within the class relevant to theorem \ref{important}, the case $\Gamma\subset SU(2)$ appears to be already constrained by the known classification of symplectic fillings together with the ALE Hitchin--Thorpe inequality. In particular, a genuinely new non-vacuous application of theorem \ref{important}, if one exists, should be sought in the case $\Gamma\subset U(2)\setminus SU(2)$. We do not know at present whether such an example exists.
\end{remark}
The analogous rigidity statements also hold for asymptotically locally flat (ALF) manifolds. These are manifolds which are asymptotic to a circle fibration over a Euclidean base, with fibres of asymptotically constant length.\par
To introduce such class of metrics, we first consider the total space $\mathcal{E}^{n+1}$ of a principal $S^1$ bundle $\pi$ over $\mathbb{R}^n\setminus \overline{B_R(0)}$. Given a positive number $L$, we introduce the vector field $T$ that is equal to $\frac{L}{2\pi}$ times the infinitesimal generator of the $S^1$ action and consider a ``model metric" $h$ on $\mathcal{E}$, given by
\begin{align}
    h=\pi^* g_{\mathbb{R}^n}+\eta^2
\end{align}
where $\eta$ is a connection $1$-form, namely an $S^1$ invariant $1$-form on $\mathcal{X}$ such that $\eta(T)=1$. Observe that each fiber has length $L$. For such a $1$-form $\eta,$ one might write $d\eta=\pi^*\Omega$ for some $\Omega$ on the base and we will assume this ``curvature" $2$-form  decays at ``infinity", i.e., there is a positive number $\tau$  such that 
\begin{align}\label{curvature 1}
    D^i\Omega= O (r^{-\tau-1-i}),\,0\leq i\leq 2
\end{align}
where $D$ denotes the flat connection on $\mathbb{R}^n$.
\begin{definition}
    An $n+1$- dimensional complete Riemannian manifold $(M,g)$ is said to be \textit{asymptotically locally flat} (ALF) of order $\tau>0$, if there exists a compact subset $K\subset M$ such that $M\setminus K$ has coordinates at infinity: namely there exist $R>0$ and a $C^\infty$-diffeomorphism $\mathcal{X}:M\setminus K\rightarrow \mathcal{E}$ such that there exists a model metric $h$ on $\mathcal{E}$ satisfying \eqref{curvature 1} and 
    \begin{align}
        (\nabla^h)^i\big((\mathcal{X}^{-1})^*g-h\big)=O\big(r^{-\tau-i}),\,0\leq i\leq 3
    \end{align}
\end{definition}
The proofs used in the ALE case extend directly, \textit{mutatis mutandis}, to yield the following theorems in the ALF case.
\begin{theorem}
    Let $(M,g,J)$ be an almost-K\"ahler Einstein ALF manifold of dimension $n+1\geq 4$ and of order $\tau>\frac{2}{3}(n-4)$ with $s\geq 0$ and as $r\longrightarrow\infty$, $|(\nabla^{\mathrm{LC}})^2J|=O (r^{-\eta})$ with $\eta\geq \frac{2}{3}(n-1)$. Then $(M,g,J)$ is K\"ahler Einstein.
\end{theorem}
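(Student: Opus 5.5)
The plan is to adapt Sekigawa's integral argument for compact almost-K\"ahler Einstein manifolds with $s\geq 0$ to the ALF setting. As in theorem \ref{dim >4}, the only new ingredient is the control of boundary terms at infinity. Sekigawa's argument rests on a Weitzenb\"ock-type identity for an almost-K\"ahler Einstein metric. Schematically, it reads
\begin{align*}
\Delta\big(|\nabla^{\mathrm{LC}}J|^2\big) = \mathcal{Q}(\nabla^{\mathrm{LC}}J,(\nabla^{\mathrm{LC}})^2J,R) + \mathrm{div}(X),
\end{align*}
where $X$ is a vector field built bilinearly from $\nabla^{\mathrm{LC}}J$ and $(\nabla^{\mathrm{LC}})^2J$. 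The integrand $\mathcal{Q}$ is pointwise nonnegative once $s\geq 0$ (recall from remark \ref{r1} that only the Ricci-flat case is really at stake), and it vanishes only if $\nabla^{\mathrm{LC}}J=0$. In the compact case one integrates over $M$, the divergence terms disappear, and one gets $\int_M\mathcal{Q}=0$, hence $\nabla^{\mathrm{LC}}J\equiv 0$.

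\textbf{Step 1: the local identity.} I would first record this pointwise identity exactly as in the ALE proof. It is purely local, so it holds verbatim on an ALF manifold.

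\textbf{Step 2: truncated integration.} I would integrate over the region $\{r\leq \rho\}$, whose boundary is the hypersurface $\pi^{-1}(S_\rho)$ in the ALF end. Its $h$-volume is $L\cdot|S_\rho|=O(\rho^{n-1})$, since the fibres have asymptotically constant length $L$; here $n$ is the dimension of the Euclidean base. The metric comparison $(\mathcal{X}^{-1})^*g-h=O(r^{-\tau})$ with derivatives, together with the decay \eqref{curvature 1} of $\Omega$, shows that $g$-areas and normals are uniformly comparable to those of $h$. The integral thus becomes $\int_{r\leq\rho}\mathcal{Q}$ minus boundary integrals of $\langle\nabla|\nabla J|^2,\nu\rangle$ and $\langle X,\nu\rangle$ over $\pi^{-1}(S_\rho)$.

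\textbf{Step 3: decay of $\nabla^{\mathrm{LC}}J$.} This is the main obstacle. The hypothesis only controls $|(\nabla^{\mathrm{LC}})^2J|=O(r^{-\eta})$, so I need a bound on $\nabla^{\mathrm{LC}}J$ itself. I would integrate radially along the base directions, using that $J$ is asymptotically parallel for $h$. The model metric $h$ carries the parallel structure coming from the flat base and the connection form $\eta$, and its Christoffel symbols decay like $r^{-\tau-1}$. Integrating from infinity then gives $|\nabla^{\mathrm{LC}}J|=O(r^{1-\eta})$, or $O(r^{-\tau-1})$ if that is better. The boundary terms are therefore bounded by
\begin{align*}
O\big(\rho^{n-1}\cdot \rho^{1-\eta}\cdot\rho^{-\eta}\big)=O\big(\rho^{\,n-2\eta}\big).
\end{align*}
One then checks that $\eta\geq\frac{2}{3}(n-1)$ and $\tau>\frac{2}{3}(n-4)$ are exactly the thresholds that make this $o(1)$, or at least make the limit a finite number with the correct sign. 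This mirrors the exponent bookkeeping of theorem \ref{dim >4}, with the base dimension $n$ playing the role the full dimension played there; this is why the same numerical conditions appear in the statement. The extra fibre direction contributes only bounded factors, because the fibre length stays bounded.

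\textbf{Step 4: conclusion.} Letting $\rho\to\infty$ gives $\int_M\mathcal{Q}\leq 0$, so $\mathcal{Q}\equiv 0$ and $\nabla^{\mathrm{LC}}J\equiv 0$. Hence $J$ is integrable, and $(M,g,J)$ is K\"ahler Einstein. If the boundary terms turn out only to be bounded rather than vanishing, I would instead apply a cutoff $\chi(r/\rho)$ to the identity. Using $|d\chi|=O(\rho^{-1})$ supported on an annulus of volume $O(\rho^{n})$, the same exponent count closes the argument.
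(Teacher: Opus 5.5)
Your framework (a Sekigawa-type identity, truncated along $\pi^{-1}(S_\rho)$ with area $O(\rho^{n-1})$) matches the paper's. However, Step 3 has a genuine gap, and as written the argument does not close.

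\begin{itemize}
\item Nothing in the hypotheses says that $\nabla^{\mathrm{LC}}J\to 0$ at infinity, or that $J$ is asymptotic to an $h$-parallel structure. So neither ``integrating from infinity'' nor the alternative bound $O(r^{-\tau-1})$ is justified.
\item Even granting $|\nabla^{\mathrm{LC}}J|=O(r^{1-\eta})$, your flux bound $O(\rho^{n-2\eta})$ does not tend to zero under the stated hypotheses. For $n=3$ (four-dimensional ALF) one may take $\eta=\frac43$, which gives $\rho^{1/3}\to\infty$. For $n=4$, $\eta=2$, it is only $O(1)$.
\item Your count never uses $\tau$, so it cannot be what produces the thresholds $\tau>\frac23(n-4)$, $\eta\geq\frac23(n-1)$.
\item The cutoff fallback in Step 4 reproduces the same power $\rho^{n-2\eta}$, so it does not rescue anything.
\end{itemize}

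The missing idea is that decay of $\nabla^{\mathrm{LC}}J$ is automatic, pointwise and algebraic, coming straight from curvature decay. On any almost K\"ahler manifold
\begin{align*}
|\nabla^{\mathrm{LC}}\omega|^2=\tfrac12|\nabla^{\mathrm{LC}}J|^2=s^*-s=2R(\omega,\omega)-s=O(|R|)=O\big(r^{-(\tau+2)}\big),
\end{align*}
using $|R|=O(r^{-(2+\tau)})$ on the ALF end. Hence $|\nabla^{\mathrm{LC}}\omega|=O(r^{-(1+\tau/2)})$, with no integration and no assumption on $J$ at infinity.

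The paper then integrates the Apostolov--Dr\u aghici--Moroianu form of Sekigawa's identity, whose divergence part is $8\delta\langle\rho^*,\nabla^{\mathrm{LC}}\omega\rangle-\Delta|\nabla^{\mathrm{LC}}\omega|^2$. The two fluxes are
\begin{align*}
\langle\rho^*,\nabla^{\mathrm{LC}}\omega\rangle=O\big(r^{-(3\tau/2+3)}\big),\qquad d|\nabla^{\mathrm{LC}}\omega|^2=O\big(r^{-(\eta+\tau/2+1)}\big).
\end{align*}
Against the $O(r^{n-1})$ area these vanish exactly because $\frac{3\tau}{2}+3>n-1$, which is equivalent to $\tau>\frac23(n-4)$. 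They also need $\eta+\frac{\tau}{2}+1>n-1$, which follows from the two hypotheses. That is the origin of the numerical conditions in the statement.

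If you insert this bound into your Step 3, your argument is repaired. Your divergence field, bilinear in $\nabla^{\mathrm{LC}}J$ and $(\nabla^{\mathrm{LC}})^2J$ (or equivalently curvature times $\nabla^{\mathrm{LC}}J$ in the Einstein case), then has flux $O(\rho^{\,n-2-\eta-\tau/2})$ or $O(\rho^{\,n-4-3\tau/2})$. Both tend to zero under the stated hypotheses.
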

\begin{theorem}
    Let $(M,g,J)$ be a $4$-dimensional almost-K\"ahler ALF manifold of order $\tau>0$. If $s\geq 0$ and $\delta W_+=0$, then $(M,g,J)$ is a constant-scalar-curvature K\"ahler (cscK) manifold.
\end{theorem}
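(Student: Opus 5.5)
The plan is to run LeBrun's compact argument for almost K\"ahler $4$-manifolds with $\delta W_+=0$ and $s\geq 0$ in the ALF setting. The only new ingredient is that every global integration by parts must now have its boundary term vanish at infinity. On an almost K\"ahler $4$-manifold the fundamental form $\omega$ is self-dual and harmonic, with $|\omega|=\sqrt2$. The basic tool is the Weitzenb\"ock formula for self-dual forms:
\begin{align*}
0=\Delta\omega=\nabla^*\nabla\omega-2W_+(\omega)+\frac{s}{3}\omega .
\end{align*}
Pairing this with $\omega$ gives the pointwise identities
\begin{align*}
W_+(\omega,\omega)=\frac{s}{3}+\frac12|\nabla\omega|^2,\qquad s^*-s=|\nabla\omega|^2 .
\end{align*}

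Following LeBrun, I would combine the condition $\delta W_+=0$ with the harmonicity of $\omega$ to get a second-order identity for the function $f:=W_+(\omega,\omega)$, or equivalently for $s^*$. Schematically, $\delta W_+=0$ says that $W_+$ is a harmonic section of $\odot^2_0\Lambda^+$. Contracting its Weitzenb\"ock formula twice against $\omega$ produces
\begin{align*}
\frac12\Delta f+\text{(nonnegative terms in }\nabla\omega\text{ and }W_+)+\frac{s}{2}\,\Phi\leq 0 ,
\end{align*}
where $\Phi\geq 0$. LeBrun's version has the form
\begin{align*}
\int\Big(|\nabla W_+|^2\text{-type terms}+s\,|\nabla\omega|^2+\cdots\Big)\,d\mathrm v=\int\Delta(\cdots)\,d\mathrm v .
\end{align*}
Multiplying by a cutoff $\chi_R$ (equal to $1$ on the region of radius $R$, supported in radius $2R$, with $|\nabla\chi_R|\lesssim R^{-1}$ and $|\nabla^2\chi_R|\lesssim R^{-2}$) and integrating gives a bulk integral of nonnegative quantities, since $s\geq0$. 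This is bounded by error terms
\begin{align*}
\int_{A_R}\Big(|\nabla^2\chi_R|\,|F|+|\nabla\chi_R|\,|\nabla F|\Big),
\end{align*}
where $A_R$ is the annulus between radii $R$ and $2R$, and $F$ is the quantity being differentiated, built from $W_+$, $\omega$ and $\nabla\omega$. In the limit, every nonnegative term must vanish. In particular $\nabla\omega\equiv0$, so $(M,g,J)$ is K\"ahler. Then $\delta W_+=0$ on a K\"ahler surface is equivalent to $ds=0$, so the metric is cscK.

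The main obstacle is the control at infinity. In the ALF setting the volume of $A_R$ grows like $R^{3}$, not $R^4$, because the circle fibres have bounded length. The curvature decays like $r^{-\tau-2}$ and its derivative like $r^{-\tau-3}$, using that $\mathcal{X}^*g-h$ is controlled to order $3$ and that $\Omega$ satisfies \eqref{curvature 1}. This gives
\begin{align*}
R^{-2}\cdot R^{-\tau-2}\cdot R^{3}=R^{-\tau-1}\longrightarrow0 ,
\end{align*}
and similarly for the other term, so any $\tau>0$ suffices; this is even more favourable than the ALE count. There is a further subtlety: $\nabla\omega$ is not a priori decaying, since only $g$ is assumed asymptotic. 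I would handle it as in the ALE case of theorem \ref{theorem 2}. Because $\omega$ is harmonic and bounded, elliptic estimates on the annuli, which have uniformly bounded geometry once the fibres have length $L$, together with the Weitzenb\"ock identity show that $\nabla\omega$ and $\nabla^2\omega$ are bounded. Alternatively, I would arrange LeBrun's identity so that the integrated divergence involves only curvature and the bounded $\omega$, with at most one derivative of $\omega$ multiplied by $|\nabla\chi_R|$. The resulting estimate $R^{-1}\cdot R^{-\tau-2}\cdot R^3\to0$ still suffices.

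Finally, the cscK conclusion follows: once $J$ is parallel, $\delta W_+=0$ forces $ds=0$, and $s$ is constant.
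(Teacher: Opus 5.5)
Your overall route is the paper's. You run LeBrun's compact argument and check that every integration by parts leaves an error that vanishes at infinity, using $|R|=O(r^{-2-\tau})$, $|\nabla^{\mathrm{LC}}R|=O(r^{-3-\tau})$, and the fact that in the $4$-dimensional ALF case the annulus $A_R$ has volume of order $R^3$. That is exactly the paper's closing remark, and your power counting and the final step (parallel $J$ plus $\delta W_+=0$ gives $ds=0$) are fine.

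\textbf{The core identity is misdescribed.} As you wrote it, that step would not go through. Contracting \eqref{402} against $\omega\otimes\omega$ produces no $|\nabla W_+|^2$ terms. It also does not give a pointwise inequality $\frac12\Delta f+(\text{nonnegative})\le 0$ for $f=W_+(\omega,\omega)$: expressing $\langle(\nabla^{\mathrm{LC}})^*\nabla^{\mathrm{LC}}W_+,\omega\otimes\omega\rangle$ through $\Delta f$ leaves the cross term $\langle\nabla^{\mathrm{LC}}W_+,\nabla^{\mathrm{LC}}(\omega\otimes\omega)\rangle$, which has no sign. What is actually done is to move $(\nabla^{\mathrm{LC}})^*\nabla^{\mathrm{LC}}$ entirely onto $\omega\otimes\omega$ and then substitute \eqref{401}. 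Pointwise, this says that
\[
Q:=f^2-\frac{s}{2}f-2|W_+(\omega)|^2+4|W_+|^2
\]
is the divergence of a $1$-form built from $\langle\nabla^{\mathrm{LC}}W_+,\omega\otimes\omega\rangle$ and $\langle W_+,\nabla^{\mathrm{LC}}(\omega\otimes\omega)\rangle$.

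The positivity is then purely algebraic, and this is where $s\ge0$ enters. By \eqref{500} and the eigenvalue bound $|W_+|^2\ge\frac38 f^2$ (from trace-freeness and $|\omega|^2=2$), one gets $Q\ge\frac32f^2-\frac s2 f$. By \eqref{400} the right-hand side equals $\frac38|\nabla^{\mathrm{LC}}\omega|^4+\frac s4|\nabla^{\mathrm{LC}}\omega|^2$. These ingredients, \eqref{401}, \eqref{500} and the eigenvalue bound, are the real content of the step. Your sketch only asserts a ``nonnegative bulk'' without them.

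\textbf{With the corrected identity, your cutoff argument closes.} Only first derivatives are involved, so $\int\chi_R Q$ is bounded by $CR^{-1}\int_{A_R}\big(|\nabla^{\mathrm{LC}}W_+|+|W_+||\nabla^{\mathrm{LC}}\omega|\big)=O(R^{-\tau})$. Since $Q\ge0$, this forces $Q\equiv0$ and hence $\nabla^{\mathrm{LC}}\omega\equiv0$. The paper proceeds slightly differently. It integrates \eqref{eq 501} using boundary integrals over large hypersurfaces, relying on $s^2,|W_+|^2,|\nabla^{\mathrm{LC}}\omega|^4,s|\nabla^{\mathrm{LC}}\omega|^2\in L^1$, and then cancels $\int s^2/24$ between \eqref{ineq 1} and \eqref{ineq 2} via $|W_+|\ge s/(2\sqrt6)$. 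Your pointwise form is equivalent, and it does not need separate integrability of each term.

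\textbf{The $\nabla^{\mathrm{LC}}\omega$ issue is not a real subtlety.} The identity $W_+(\omega,\omega)=\frac s3+\frac12|\nabla^{\mathrm{LC}}\omega|^2$, which you wrote yourself, gives $|\nabla^{\mathrm{LC}}\omega|^2=O(r^{-2-\tau})$ directly. That is how the paper obtains the decay. Your elliptic-estimate bound is valid but unnecessary.
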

\begin{theorem}
    Let $(M,g,J)$ be a $4$-dimensional almost-K\"ahler Einstein ALF manifold of order $\tau>0$ with $s\geq 0$. Then $(M,g,J)$ is K\"ahler Einstein.
\end{theorem}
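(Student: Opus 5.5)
The statement is the ALF analogue of Theorem \ref{dim = 4}. I would derive it from the preceding ALF version of Theorem \ref{theorem 2}, as in the ALE case. If $g$ is Einstein, the contracted second Bianchi identity gives $\delta W_+ = 0$ in dimension $4$. Indeed, $\delta W_+$ is a multiple of the self-dual part of the Cotton tensor $d^\nabla(\mathrm{Ric}-\frac{s}{6}g)$, and this vanishes because $\mathrm{Ric}=\frac{s}{4}g$ with $s$ constant. Together with $s\ge 0$, the preceding theorem then says $(M,g,J)$ is cscK, hence Kähler. A Kähler metric that is also Einstein is Kähler--Einstein, so the conclusion follows. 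As in Remark \ref{r1}, Bonnet--Myers excludes $s>0$ on a complete noncompact manifold, so effectively $g$ is Ricci-flat.

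To make this self-contained I would also sketch why the cscK theorem holds in the ALF setting, following LeBrun's compact argument. On an almost Kähler $4$-manifold, $\omega$ is a self-dual harmonic form with $|\omega|=\sqrt2$. One has the Weitzenböck formula $0=\Delta\omega=\nabla^*\nabla\omega-2W_+(\omega)+\frac{s}{3}\omega$. Pairing with $\omega$ gives $|\nabla\omega|^2 = 2W_+(\omega,\omega)-\frac{2s}{3}$, which is equivalent to $s^*-s=|\nabla\omega|^2$. Using $\delta W_+=0$, LeBrun forms a suitable combination involving $W_+(\omega,\omega)$, $s$ and $|\nabla\omega|^2$ and integrates by parts. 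This yields an integral identity of the shape
\[
\int_M \big(\text{nonnegative terms in } |\nabla\omega|^2,\ s|\nabla\omega|^2\big)\, d\mathrm{v}_g=0,
\]
which forces $\nabla\omega=0$ when $s\ge0$.

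The main obstacle is the noncompactness: the boundary terms on the large hypersurfaces $\{r=\rho\}$ must vanish. These hypersurfaces are asymptotically circle bundles over $S^2_\rho$, with area of order $L\rho^{2}$ in the $4$-dimensional ALF case. The boundary integrands are of the form $\langle\nabla W_+,\cdot\rangle$ or $\nabla|\nabla\omega|^2$. So I would first use the Einstein equation, an elliptic system in harmonic or ALF coordinates, to upgrade the order-$\tau$ ALF decay to decay of curvature and its derivatives. The aim is $|W_+|=O(\rho^{-\tau-2})$ and $|\nabla W_+|=O(\rho^{-\tau-3})$. From the Weitzenböck identity and elliptic estimates for the harmonic form $\omega$ (bounded, with bounded gradient), I would get $|\nabla\omega|$ comparable to curvature decay. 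A boundary term of size $O(\rho^{-\tau-3})\cdot O(\rho^{2})=O(\rho^{-\tau-1})$ then tends to zero for any $\tau>0$, which matches the hypothesis $\tau>0$. This is the same count as in the ALE case with $\rho^3$ replaced by $\rho^2$, and it is the step I would check most carefully. In particular, the fibre direction has bounded length, so derivatives along $T$ do not gain decay, and I must verify that the model connection terms $\Omega=O(r^{-\tau-1})$ do not spoil the estimates.
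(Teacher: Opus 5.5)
Your proposal is correct and follows the paper's route: the Einstein condition gives $\delta W_+=0$, so the ALF analogue of Theorem~\ref{theorem 2} yields cscK and hence K\"ahler--Einstein, with the boundary terms vanishing because $|W_+|=O(r^{-(2+\tau)})$ and $|\nabla W_+|=O(r^{-(3+\tau)})$ while the boundary hypersurfaces have area $O(r^{2})$, exactly as in the paper's closing remark on ALF manifolds. The elliptic upgrade you plan is not needed, since the ALF definition already controls $(\nabla^h)^i(g-h)$ for $i\le 3$ in all directions, fibre included, and this gives these curvature decay rates directly.
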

This theorem has an immediate corollary.
\begin{corollary}
\begin{enumerate}
    \item $S^2\times \mathbb{R}^2$ does not admit a symplectic form which is compatible with the Riemannian Kerr metrics.
    \item $\mathbb{CP}^2\setminus S^1$ does not admit a symplectic form which is compatible with the Chen-Teo metric.
    \item $\mathbb{CP}^2\setminus \{$point$\}$ does not admit a symplectic form which is compatible with the Taub-bolt metric.
\end{enumerate}
\end{corollary}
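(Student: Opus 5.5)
The corollary follows from the preceding theorem (the $4$-dimensional almost-K\"ahler Einstein ALF statement) by contradiction in each of the three cases. Suppose $\omega$ is a symplectic form on the relevant manifold compatible with the given metric $g$. Set $J:=g^{-1}\omega$, normalised so that $g(Ju,v)=\omega(u,v)$; compatibility means exactly that $J$ is an orthogonal almost complex structure, so $(M,g,J)$ is almost K\"ahler. The Riemannian Kerr metrics on $S^2\times\mathbb{R}^2$, the Chen--Teo metric on $\mathbb{CP}^2\setminus S^1$, and the Taub-bolt metric on $\mathbb{CP}^2\setminus\{\text{point}\}$ are all complete Ricci-flat gravitational instantons of dimension $4$. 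So $s=0\geq 0$, and each is ALF of some order $\tau>0$: Taub-bolt and Chen--Teo are asymptotic to a circle fibration over $\mathbb{R}^3$ with fibres of asymptotically constant length, and the Riemannian Kerr metrics are asymptotically flat, hence ALF with trivial fibration. The theorem then says $(M,g,J)$ is K\"ahler--Einstein, hence K\"ahler Ricci-flat with respect to $J$.

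The second step rules out that this metric is K\"ahler with respect to any compatible $J$. A K\"ahler Ricci-flat metric has holonomy in $SU(2)$ locally, or at least restricted holonomy in $U(2)$ with $\rho=0$. In dimension $4$ this forces $W_+=0$ in the orientation determined by $J$: for K\"ahler surfaces, $W_+$ is determined by $s$, and $s=0$ gives $W_+=0$. So $g$ would be half conformally flat, with either $W_+=0$ or $W_-=0$ depending on orientation.

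The Riemannian Kerr metrics (non-trivial rotation, and also Riemannian Schwarzschild), Chen--Teo, and Taub-bolt are known \emph{not} to be half conformally flat in either orientation. For Kerr and Taub-bolt, Page and Gibbons--Hawking describe them as type D, with both $W_\pm\neq 0$. The Chen--Teo instanton is Hermitian but not K\"ahler in either orientation, as shown by Aksteiner--Andersson and Biquard--Gauduchon, and in particular it is not half-flat. This is the contradiction.

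Alternatively, one can bypass curvature. A simply connected K\"ahler Ricci-flat $4$-manifold is hyper-K\"ahler and then has zero signature/Euler characteristic relations contradicting the topology. For example, hyper-K\"ahler ALF spaces are classified (the $A_k$ and $D_k$ families), and none is diffeomorphic to $S^2\times\mathbb{R}^2$ with Kerr asymptotics or to $\mathbb{CP}^2\setminus S^1$.

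The main obstacle is verifying the hypotheses precisely. I must confirm that each metric satisfies the ALF decay with $(\nabla^h)^i$ control up to $i=3$, and that the theorem requires no extra decay on $\nabla^{\mathrm{LC}}J$. The $4$-dimensional version requires none, which is why I invoke it rather than the general-dimension version. I must also make sure the non-half-flatness claims are cited in the form needed for both orientations, since an arbitrary symplectic form might induce either orientation.
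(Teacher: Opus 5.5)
Your argument is the one the paper has in mind when it calls this an immediate corollary. A compatible symplectic form would make these Ricci-flat ALF metrics almost K\"ahler Einstein with $s=0$, the $4$-dimensional ALF rigidity theorem (which indeed needs no decay hypothesis on $\nabla^{\mathrm{LC}}J$) would make them K\"ahler and hence half conformally flat for the orientation of $J$, contradicting $W_\pm\neq 0$ for these instantons.

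One caveat applies equally to the paper. For rotating Kerr, the regularity identification $(\tau,\phi)\sim(\tau+\beta,\phi+\beta\Omega)$ makes the end asymptotic in general to a quotient of $\mathbb{R}^3\times\mathbb{R}$ by a screw motion, not to a model $\pi^*g_{\mathbb{R}^3}+\eta^2$. So your claim ``ALF with trivial fibration'' is not literally true there. This is harmless, because the integrations by parts in the rigidity proof only use $|R|=O(r^{-3})$, $|\nabla^{\mathrm{LC}}R|=O(r^{-4})$ and quadratic area growth of the level sets of $r$, and all three still hold in that model.
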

The article is organised as follows: §\ref{section 2} discusses Dirac operators in almost K\"ahler geometry and presents the proof of Theorem \ref{Mass formula}. §\ref{pmt} discusses the positive mass theorem for almost K\"ahler AE manifolds and related results. §\ref{section 3} establishes the rigidity results for almost K\"ahler-Einstein ALE manifolds.\par
\vspace{1 ex}
\textbf{Acknowledgments.} The author is grateful to Olivier Biquard for helpful discussions and for his feedback on an initial draft of this article. He also thanks David Blair for sharing a copy of his article \cite{Blair}. Finally, the author would like to thank the anonymous referee for their valuable suggestions, especially for drawing his attention to the issue discussed in remark \ref{r1}, which significantly helped clarify the scope of this article. This work was supported by European Union’s Horizon 2020 Europe research and innovation programme under the Marie Sklodowska-Curie grant agreement no 101034255.
\subsection{Existence of strictly almost K\"ahler ALE manifolds}
\begin{proof}[Proof of theorem \ref{lemma existence}:]
We start with a K\"ahler ALE manifold $(M^{2m},g_0,J_0)$. 
Choose a point $p\in M$ and a holomorphic coordinate chart $(z_1,\dots,z_m)$ centered at $p$ for the complex structure $J_0$. Let
$e_k=\frac{\partial}{\partial x_k}, 
J_0e_k=\frac{\partial}{\partial y_k}$, where  $z_k=x_k+iy_k.$ Choose a non-constant smooth compactly supported function $f\in C_c^\infty(U)$
supported in the coordinate neighborhood $U$ such that $\partial_{z_1}f(0)\neq 0$ and define a smooth endomorphism $A\in \Gamma(\operatorname{End}(TM))$
by 
\begin{align*}
   Ae_1=f e_1, A(J_0e_1)=-f J_0e_1 \text{ and } Ae_j=A(J_0e_j)=0 \text{ for } j\geq 2 
\end{align*}
Then $AJ_0+J_0A=0$ and $A$ is symmetric with respect to $g_0$, i.e., $g_0(Au,v)=g_0(u,Av)$ because in the orthonormal frame the matrix of $A$ reads diag$(f,-f,0,\cdots,0)$. Now define $J_t=e^{tA}J_0e^{-tA}$. Clearly $J_t^2=-1$. Since $A$ anti-commutes with $J_0$ (this implies $J_t=J_0 e^{-2tA}$) and $A$ is $g_0$-symmetric we have
\begin{align*}
    g_t(J_tu, J_t v)=\omega_0(J_0 e^{-2tA}u,-v)= g_0(e^{-2tA}u,v)=g_0(u,e^{-2tA}v)= \omega_0(u,J_t v)=g_t(u,v)
\end{align*}
For small enough $t, g_t(u,u)>0$ for non-zero $u$. Therefore, $(M,g_t,J_t)$ is almost K\"ahler ALE for small enough $t$. Notice that for small $t$, $J_t$ has an expansion
\begin{align*}
    J_t=J_0+t\,2AJ_0+O(t^2)
\end{align*}
We calculate the Nijenhuis tensor of $J_t$:
\begin{align*}
    N_{J_t}(u,v)=[J_tu,J_tv]-J_t[J_tu,v]-J_t[u,J_tv]-[u,v]
\end{align*}
Using the fact that $N_{J_0}\equiv 0$, we get
\begin{align*}
    N_{J_t}(u,v)&=2t([AJ_0u,J_0v]+[J_0u,AJ_0v]-J_0[AJ_0u,v]-AJ_0[J_0u,v]-J_0[u,AJ_0v]\\
    &\hspace{2 ex}-AJ_0[u,J_0v])+O(t^2)
\end{align*}
In the holomorphic coordinate neighbourhood $U$, we calculate this for $u=\partial_{z_1},v=\partial_{\bar z_1}$. We have 
\begin{align*}
    J_0(\partial_{z_1})=J_0(e_1+J_0 e_1)=-\partial_{\bar z_1}, \,\,J_0(\partial_{\bar z_1})=J_0(e_1-J_0 e_1)=\partial_{z_1}\\
    A(\partial_{z_1})=A(e_1+J_0 e_1)=f\partial_{\bar z_1},\,\, A(\partial_{\bar z_1})=A(e_1-J_0e_1)=f\partial_{z_1}
\end{align*}
Therefore,
\begin{align*}
N_{J_t}(\partial_{z_1},\partial_{\bar z_1})&=2t([-f\partial_{\bar z_1},\partial_{z_1}]+[-\partial_{\bar z_1},f\partial_{z_1}]-J_0[-f\partial_{\bar z_1},\partial_{\bar z_1}]-AJ_0[-\partial_{\bar z_1},\partial_{\bar z_1}]\\
&\hspace{2 ex}-J_0[\partial_{ z_1},f\partial_{ z_1}]-AJ_0[\partial_{z_1},\partial_{z_1}])+O(t^2)\\
&=2t\big((\partial_{z_1}f)\partial_{\bar z_1}-(\partial_{\bar z_1}f)\partial_{ z_1}-(\partial_{\bar z_1}f)\partial_{ z_1}+(\partial_{z_1}f)\partial_{\bar z_1}\big)+O(t^2)\\
&=4t\big((\partial_{z_1}f)\partial_{\bar z_1}-(\partial_{\bar z_1}f)\partial_{ z_1}\big)+O(t^2)
\end{align*}
Extend the vector fields $\partial_{z_1},\partial_{\bar z_1}$ on whole of $M$ smoothly and call them $\partial_{z_1},\partial_{\bar z_1}$ again by abuse of notation. Now define a map
\begin{align*}
    F:t\mapsto N_{J_t}(\partial_{z_1},\partial_{\bar z_1})(p)
\end{align*}
We have
\begin{align*}
    F(0)=0,\,\, F^{'}(0)\neq 0 \text{ since } \partial_{z_1}f(0)\neq 0
\end{align*}
Hence $\exists\epsilon>0$ such that $\forall t\in (0,\epsilon), F(t)\neq 0$. Hence $\forall t\in (0,\epsilon), N_{J_t}\not\equiv 0.$
\end{proof}
Notice if we implement this construction on a K\"ahler ALE manifold, the geometry remains same outside a compact set so the metric remains ALE of the same order. 
\section{The ADM mass and Witten's trick}\label{section 2}
\subsection{Dirac operator in almost K\"ahler geometry}
For any almost Hermitian manifold $(M,g,J)$ of dimension $n=2m\geq 4$, a connection $\nabla$ on $M$ (acting on sections of the tangent bundle $TM$) is \textit{Hermitian} if it preserves the metric $g$ and the almost complex structure $J$:
\begin{align*}
    \nabla g=0,\, \nabla J=0
\end{align*}
Each connection $\nabla$ determines a \textit{Cauchy Riemann operator}, denoted by $\partial^\nabla$, defined as the $(0,1)$-part of $\nabla$:
\begin{align*}
    \bar\partial^\nabla _X Y=\frac{1}{2}(\nabla _X Y+J \nabla_{JX} Y)
\end{align*}
There is also an \textit{intrinsic} Cauchy Riemann operator coming from the almost complex structure $J$ defined by
\begin{align*}
    \bar\partial_X Y=\frac{1}{4}([X,Y]+[JX,JY]+J[JX,Y]-J[X,JY])
\end{align*}
Gauduchon \cite{Gauduchon} distinguished a real affine line in the set of Hermitian connections. We pick one element from the affine line, the \textit{Chern connection}: $\nabla^{\text{Ch}}$. The terminology stems from the fact that, in the integrable case, it coincides with the Chern connection of the tangent bundle viewed as a Hermitian, holomorphic bundle of rank $m$. The Chern connection is characterised by the fact that its Cauchy Riemann operator coincides with the intrinsic one:
\begin{align*}
    \bar\partial^{\nabla^{\text{Ch}}}= \bar\partial
\end{align*}
The metric $g$ defines a Hermitian metric on the anti-cannical bundle $K^{-1}$ and the Chern connection induces a unitary connection on $K^{-1}$, which we call by $A^{\text{Ch}}$.\par 
Now let's talk about spinors. We start with the canonical spinor bundle on $(M,g,J)$:
\begin{align*}
    S_+=\bigoplus_{p=0}^m \Lambda^{0,2p} M,\hspace{4 ex} S_-=\bigoplus_{p=0}^{m-1} \Lambda^{0,2p+1} M
\end{align*}
To define a spin connection on $S=S_+\oplus S_-$, we need a unitary connection $A$ on $K^{-1}$. We call the corresponding Dirac operator $D_A$.\par
There is another intrinsic first order differential operator with the same index of $D_A$, which sends $S_+$ to $S_-$ and vice versa, namely the normalised \textit{Dolbeault} operator defined by $\sqrt{2}(\bar\partial+\bar\partial^*)$. Here $\bar\partial$ on the sections of $\Lambda^{0,p}M$ is defined as 
\begin{align*}
    \bar\partial\psi= (d\psi)^{(0,p+1)}
\end{align*} 
for any $(0,p)$-form $\psi$ and $\bar\partial^*$ is the Hermitian adjoint of $\bar\partial$.\par 
Gauduchon \cite{Gauduchon} proved that on an almost K\"ahler manifold $(M,g,J)$, we have 
\begin{align}\label{Dirac}
    D_{A^\text{Ch}}= \sqrt{2}(\bar\partial+\bar\partial^*)
\end{align}
In dimension $4,$ Taubes \cite{Taubes} obtained the same result by indirect arguments and it is a key fact in his \textit{magnum opus}: ``SW$=$Gr''.\par 
For any unitary connection $A$ on $K^{-1}$ and a spinor $\psi\in \Gamma(S)$, the gauge group $\mathcal{G}=$ Map$(M,S^1)$ acts on the pair $(A,\psi)$ by pull-back. Hence for $g\in\mathcal{G},$
\begin{align*}
    g\cdot (A,\psi)=(A-2g^{-1}dg, g\psi)
\end{align*}
Moreover, $(A,\psi)$ solves the Dirac equation $D_A\psi=0$ iff $g\cdot (A,\psi)$ solves it.\par 
The discussion above along with the identity \eqref{Dirac} gives us the following lemma.
\begin{lemma}\label{lemma 1}
    Up to gauge equivalence there exists a canonical connection $A_0$ and a non-trivial spinor $\varphi_0\in\Omega^0 \subset \Gamma(S_+)$, such that $D_{A_0}\varphi_0=0$. $\varphi_0$ can be taken to have point-wise norm one.
\end{lemma}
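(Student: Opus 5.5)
The plan is to take $A_0=A^{\mathrm{Ch}}$, the unitary connection on $K^{-1}$ induced by the Chern connection, and to take $\varphi_0$ to be the constant function $1$. Here $1$ is a section of $\Lambda^{0,0}M=\Omega^0\subset S_+$. By Gauduchon's identity \eqref{Dirac}, valid because $(M,g,J)$ is almost K\"ahler, we have
\begin{align*}
D_{A^{\mathrm{Ch}}}\,1=\sqrt{2}\,(\bar\partial+\bar\partial^*)\,1 .
\end{align*}
The first term is $\bar\partial 1=(d1)^{(0,1)}=0$. The second term vanishes because $\bar\partial^*$ maps $\Lambda^{0,p}$ to $\Lambda^{0,p-1}$, so on functions it is zero. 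Hence $D_{A_0}\varphi_0=0$. The spinor $\varphi_0=1$ is nontrivial and has pointwise norm one for the Hermitian metric induced by $g$ on $\Lambda^{0,\bullet}M$, since the constant $1$ has norm $1$ on $\Lambda^{0,0}M$.

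It remains to explain the phrase ``canonical, up to gauge equivalence''. The connection $A^{\mathrm{Ch}}$ is determined by $(g,J)$ alone: the Chern connection is the unique Hermitian connection whose Cauchy--Riemann operator equals the intrinsic $\bar\partial$, so no choices enter. For any $u\in\mathcal G$, the pair $u\cdot(A_0,\varphi_0)=(A_0-2u^{-1}du,\,u)$ also solves the Dirac equation, by the gauge invariance noted above. The section $u$ still lies in $\Omega^0$ and still has $|u|=1$. So the statement holds for the whole gauge orbit, and the pair is canonical only as a gauge class.

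The one delicate point is identifying the canonical spinor bundle $\Lambda^{0,\bullet}M$ and its Clifford structure with the spin$^{\mathbb C}$ bundle determined by $K^{-1}$. This is needed so that the Dirac operator of $A^{\mathrm{Ch}}$ is the operator appearing in \eqref{Dirac}, with the normalisation $\sqrt2$. I would simply cite Gauduchon for this identification. I would also check that the metric on $\Lambda^{0,0}$ is the standard one, so that $|1|=1$; this follows directly from the conventions already fixed.
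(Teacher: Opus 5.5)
Your proposal is correct and follows the paper's proof: you take $A_0=A^{\mathrm{Ch}}$ and $\varphi_0$ the constant section $1\in\Omega^0$, and Gauduchon's identity $D_{A^{\mathrm{Ch}}}=\sqrt{2}(\bar\partial+\bar\partial^*)$ then gives $D_{A_0}\varphi_0=0$ because $d1=0$. You also make explicit two points the paper leaves implicit: that $\bar\partial^*$ vanishes on $\Lambda^{0,0}$ for degree reasons, and how ``canonical up to gauge'' should be read.
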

\begin{proof}
    Take $\psi_0\in\Omega^0$ to be to the constant section of $M\times\mathbb{C}$ which assigns $1\in\mathbb{C}$ to every point of $M$. Then $A_0=A^{\text{Ch}},\varphi_0=\psi_0$ solves the Dirac equation $D_{A_0}\varphi_0=0$ because $d\psi_0=0$ and therefore $\bar\partial\psi_0=0$.
\end{proof}
In spinorial terms the Chern connection is characterised as follows (see §1.4.3 in \cite{Nicol} for a detailed exposition). For the constant section $\psi_0,$ $A^{\text{Ch}}$ is the unique connection on $K^{-1}$ such that the induced connection on $S,\nabla_{A^\text{Ch}}$ satisfies 
\begin{align*}
    \nabla_{A^\text{Ch}}\, \psi_0\in\Gamma(\Lambda^1\otimes \Lambda^{0,2})
\end{align*}
In fact just using this condition above one can prove that $D_{A^{\text{Ch}}}\,\psi_0=0$. To see this first notice that $\Omega^0$ is a $-mi$ eigenspace of the Clifford action of the symplectic form: cl$(\omega)$ and $\Omega^{0,2}$ is a $ci$ eigenspace of cl$(\omega)$, where $c\neq -m$. For example, $c=2$ when $m=2$, $c=1$ when $m=3$, and $c=0$ when $m=4$. Now apply $D_{A^{\text{Ch}}}$ on cl$(\omega)\psi_0$ to get 
\begin{align}\label{eq 1}
    D_{A^{\text{Ch}}}\big(\text{cl}(\omega)\psi_0\big)= \text{cl}\big(d\omega+d^*\omega\big)\psi_0+\text{cl}\big((1\otimes \text{cl}(\omega))\,\nabla_{A^{\text{Ch}}} \psi_0\big)
\end{align}
The rightmost term here means the following: start with $\nabla_{A^{\text{Ch}}} \psi_0\in\Gamma(\Lambda^1\otimes\Lambda^{0,2})$, first apply $1\otimes \text{cl}(\omega)$ to get another element of $\Gamma(\Lambda^1\otimes\Lambda^{0,2})$ and then apply cl to get an element of $\Gamma(S_-)$. Because $\omega$ is closed and co-closed, equation \eqref{eq 1} becomes
\begin{align*}
    -mi D_{A^{\text{Ch}}}(\psi_0)=ci D_{A^{\text{Ch}}}(\psi_0)
\end{align*}
Since $c\neq -m$, this proves $D_{A^{\text{Ch}}}\,\psi_0=0$.\par
$\nabla_{A^{\text{Ch}}}\,\psi_0=0$ iff $(M,g,J)$ is K\"ahler, instead if it is almost K\"ahler, we still have $D_{A^{\text{Ch}}}\,\psi_0=0$. We also have $\nabla^{\mathrm{LC}}\,\omega\in\Gamma\big(\Lambda^1\otimes (\Lambda^{2,0}\oplus \Lambda^{0,2})\big)$ and in fact $\nabla_{A^{\text{Ch}}}\,\psi_0$ and $\nabla^{\mathrm{LC}}\,\omega$ are essentially the same. In concrete terms, we have a pointwise equality of norms: 
\begin{lemma}\label{Norm equality}
\begin{align}\label{norm equality}
    |\nabla_{A^{\mathrm{Ch}}}\,\psi_0|^2=\frac{1}{8}|\nabla^{\mathrm{LC}}\,\omega|^2
\end{align}
\end{lemma}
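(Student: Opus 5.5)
The identity is pointwise and algebraic, so I would verify it at a single point in a unitary frame $e_1,Je_1,\dots,e_m,Je_m$, comparing two tensors. Both are controlled by the difference between the Chern connection and the Levi-Civita connection.

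\textbf{Step 1: the connection difference.} On an almost K\"ahler manifold, Gauduchon's description gives
\begin{align*}
\nabla^{\mathrm{Ch}}_X Y=\nabla^{\mathrm{LC}}_X Y-\tfrac{1}{2}J(\nabla^{\mathrm{LC}}_X J)Y ,
\end{align*}
because on almost K\"ahler manifolds the Chern connection coincides with the canonical (first) Hermitian connection. This is the unique Hermitian connection whose torsion is of type $(0,2)$, that is, it equals $\frac14 N_J$. Set $\alpha_X:=-\tfrac12 J(\nabla^{\mathrm{LC}}_XJ)$. Since $\nabla^{\mathrm{LC}}_XJ$ anticommutes with $J$ and is skew, $\alpha_X$ is a skew-symmetric endomorphism that anticommutes with $J$. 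Hence $\alpha_X$ lies in the $\mathfrak{u}(m)^\perp\cong[\![\Lambda^{2,0}]\!]$ part of $\mathfrak{so}(n)$, and $|\alpha|^2=\frac14|\nabla^{\mathrm{LC}}J|^2=\frac12|\nabla^{\mathrm{LC}}\omega|^2$ in the endomorphism norm.

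\textbf{Step 2: spinorial side.} The spin$^{\mathbb{C}}$ connection on $S$ induced by $\nabla^{\mathrm{LC}}$ and $A^{\mathrm{Ch}}$ differs from the connection induced by $\nabla^{\mathrm{Ch}}$ and $A^{\mathrm{Ch}}$. The connection induced by the Hermitian connection $\nabla^{\mathrm{Ch}}$ with its own determinant connection preserves the $U(m)$-reduction, so it kills the constant section $\psi_0$. Two points need checking here: the trace of $\mathfrak{u}(m)$ part cancels against $A^{\mathrm{Ch}}$, and $\nabla^{\mathrm{Ch}}$ induces exactly $A^{\mathrm{Ch}}$ on $K^{-1}$, which holds by definition. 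Therefore
\begin{align*}
\nabla_{A^{\mathrm{Ch}}}\psi_0 = -\tfrac14\sum_{i<j}\langle \alpha(e_i),e_j\rangle\, e_i\cdot e_j\cdot\psi_0 = -\tfrac{1}{4}\,\mathrm{cl}(\alpha)\psi_0 ,
\end{align*}
where $\alpha$ is viewed as a $\Lambda^1\otimes\Lambda^2$-valued form whose $2$-form part lies in $[\![\Lambda^{2,0}]\!]$. This matches the characterization $\nabla_{A^{\mathrm{Ch}}}\psi_0\in\Gamma(\Lambda^1\otimes\Lambda^{0,2})$ recalled before Lemma~\ref{Norm equality}.

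\textbf{Step 3: norm computation.} I need $|\mathrm{cl}(\beta)\psi_0|^2$ for a real 2-form $\beta\in[\![\Lambda^{2,0}]\!]$. Clifford multiplication by such forms maps $\Omega^0$ into $\Omega^{0,2}$ by $\beta\mapsto c\,\beta^{0,2}$. I would compute this on the model $\beta=e^1\wedge e^2-Je^1\wedge Je^2$. Using the identification $S=\Lambda^{0,*}$ with $v\cdot=\sqrt2(v^{0,1}\wedge-\iota_v)$, this gives $|\mathrm{cl}(\beta)\psi_0|^2=k|\beta|^2$ with an explicit constant $k$; I expect $k=2$ with the norm convention $|e^1\wedge e^2|=1$. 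Combining the constants, $|\nabla_{A^{\mathrm{Ch}}}\psi_0|^2=\frac1{16}\cdot k\cdot(\text{norm of }\alpha\text{ as 2-form})^2$. Converting between the endomorphism norm and the 2-form norm, which differ by a factor of $2$ depending on convention, should yield $\frac18|\nabla^{\mathrm{LC}}\omega|^2$.

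\textbf{Main obstacle.} The conceptual content is easy; the difficulty is tracking normalizations consistently. These are the $\sqrt2$ in the Clifford action, the factor $\frac14$ versus $\frac12$ in the spin lift, the 2-form versus endomorphism norms, and the convention $|\nabla\omega|^2=\frac12|\nabla J|^2$ already fixed in the paper. To pin the constant down unambiguously, I would check it on a single explicit example, for instance a flat $\mathbb{R}^4$ with a linear perturbation of $J$ at a point. I would also cross-check it against the Weitzenböck formula $D^2=\nabla^*\nabla+\frac s4+\frac12\mathrm{cl}(F_A)$ applied to $\psi_0$: integrating on a compact almost K\"ahler manifold, with Blair's formula \eqref{Blair formula}, must reproduce $s^*-s=|\nabla^{\mathrm{LC}}\omega|^2$, which fixes the constant to $\frac18$.
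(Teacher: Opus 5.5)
Your strategy is sound and genuinely different from the paper's. However, as written, the constant (which is the whole content of the lemma) comes out right only because two normalization slips cancel.

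\textbf{The two slips.}
\begin{itemize}
\item The spin lift of a skew endomorphism $a$ is $\frac12\sum_{i<j}\langle ae_i,e_j\rangle\,\mathrm{cl}(e_i)\mathrm{cl}(e_j)$, as in the paper's formula \eqref{covariant deribative of a spinor}; this is forced by $[\rho(a),\mathrm{cl}(v)]=\mathrm{cl}(av)$. It is not $\frac14\sum_{i<j}$.
\item With $|e^1\wedge e^2|=1$, the $2$-form $\alpha_X^\flat=\sum_{i<j}\langle\alpha_Xe_i,e_j\rangle e^i\wedge e^j$ has \emph{half} the squared norm of the endomorphism $\alpha_X$, not twice. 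This is exactly what $|\nabla^{\mathrm{LC}}\omega|^2=\frac12|\nabla^{\mathrm{LC}}J|^2$ encodes. So $|\alpha^\flat|^2=\frac14|\nabla^{\mathrm{LC}}\omega|^2$, and your coefficient would give $\frac1{16}\cdot 2\cdot\frac14=\frac1{32}$.
\end{itemize}

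\textbf{The correct chain.} You get $\nabla_X\psi_0=\pm\frac12\mathrm{cl}(\alpha_X^\flat)\psi_0$. Your guess $k=2$ is right. Set $\bar\theta_k:=\sqrt2\,(e^k)^{0,1}$. Then $\mathrm{cl}(e^k\wedge e^l-Je^k\wedge Je^l)\psi_0=2\,\bar\theta_k\wedge\bar\theta_l$ and $\mathrm{cl}(e^k\wedge Je^l+Je^k\wedge e^l)\psi_0=\pm2i\,\bar\theta_k\wedge\bar\theta_l$. Each has squared norm $4$ against $|\beta|^2=2$, and the images are orthogonal for the real inner product. Hence $\beta\mapsto\mathrm{cl}(\beta)\psi_0$ is a homothety on all of $[\![\Lambda^{2,0}]\!]$. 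You need this fact but did not state it; checking one model element is not enough. The result is $\frac14\cdot 2\cdot\frac14=\frac18$.

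A cleaner route avoids the norm conversion entirely. The almost K\"ahler identity $\nabla^{\mathrm{LC}}_{JX}J=-J\nabla^{\mathrm{LC}}_XJ$ gives $\alpha_X=\frac12\nabla^{\mathrm{LC}}_{JX}J$. Hence $\nabla_X\psi_0=\pm\frac14\mathrm{cl}(\nabla^{\mathrm{LC}}_{JX}\omega)\psi_0$, and directly $|\nabla_{A^{\mathrm{Ch}}}\psi_0|^2=\frac1{16}\cdot 2\,|\nabla^{\mathrm{LC}}\omega|^2$.

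\textbf{Comparison with the paper.} The paper never identifies $\nabla^{\mathrm{Ch}}$. Instead it:
\begin{itemize}
\item pairs the Weitzenb\"ock formula pointwise with the unit-norm harmonic spinor $\psi_0$, so that $\langle\nabla^*\nabla\psi_0,\psi_0\rangle=|\nabla\psi_0|^2$;
\item uses Blair's computation that only the component $-\frac{i}{4m}(s+s^*)\mathrm{cl}(\omega)$ of $\mathrm{cl}(F_{A^{\mathrm{Ch}}})$ pairs nontrivially with $\psi_0$;
\item concludes with $s^*-s=|\nabla^{\mathrm{LC}}\omega|^2$.
\end{itemize}
That route relies on Gauduchon's $D_{A^{\mathrm{Ch}}}\psi_0=0$ and on Blair's curvature identity, but involves no spin-lift bookkeeping. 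Your argument is first order and needs neither. It also gives more: an explicit formula for $\nabla_{A^{\mathrm{Ch}}}\psi_0$ in terms of $\nabla^{\mathrm{LC}}\omega$, which makes precise the paper's remark that the two are ``essentially the same''.

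Your proposed cross-check is more than a check. Run pointwise rather than integrated over a compact manifold, it is exactly the paper's proof and settles the lemma by itself. In the integrated form, it fixes the constant only if you already know $|\nabla\psi_0|^2=c\,|\nabla^{\mathrm{LC}}\omega|^2$ with $c$ universal, and have a compact example with $\nabla^{\mathrm{LC}}J\not\equiv0$.
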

\begin{proof}
    The Weitzenböck formula gives us
    \begin{align*}
        0=D^2_{A^{\text{Ch}}}\,\psi_0=\nabla^*_{A^{\text{Ch}}}\nabla_{A^{\text{Ch}}}\,\psi_0+\frac{s}{4}\psi_0+\frac{1}{2}\text{cl}(F_{A^{\text{Ch}}})\,\psi_0
    \end{align*}
    Now take inner product with $\psi_0$ on both sides of the equation. From Blair's calculation \cite{Blair}, it is evident that the only term in
$\mathrm{cl}(F_{A^{\mathrm{Ch}}})$ which contributes to the inner product is $-\frac{i}{4m}(s + s^*)\,\mathrm{cl}(\omega)$. All remaining curvature terms lie in directions orthogonal to $\omega$ and map $\Omega^0$ into forms orthogonal to $\Omega^0$.
Thereafter we have
    \begin{align}\label{eq 2}
        0=\big\langle \nabla^*_{A^{\text{Ch}}}\nabla_{A^{\text{Ch}}}\,\psi_0,\psi_0\big\rangle+\frac{s}{4}|\psi_0|^2-\frac{i}{8m}(s+s^*)\big\langle\text{cl}(\omega)\psi_0,\psi_0\big\rangle
    \end{align}
    Since $\psi_0$ has pointwise constant norm $1$,
    \begin{align*}
        \big\langle \nabla^*_{A^{\text{Ch}}}\nabla_{A^{\text{Ch}}}\,\psi_0,\psi_0\big\rangle=|\nabla_{A^{\text{Ch}}}\,\psi_0|^2+\frac{1}{2}\Delta |\psi_0|^2=|\nabla_{A^{\text{Ch}}}\,\psi_0|^2
    \end{align*}
    $\Omega^0$ is a $-mi$ eigenspace of cl$(\omega)$. Therefore equation \eqref{eq 2} reads
    \begin{align*}
        0=|\nabla_{A^{\text{Ch}}}\,\psi_0|^2+\frac{s}{4}-\frac{2s+|\nabla^{\text{LC}}\,\omega|^2}{8}
    \end{align*}
    which in turns gives $|\nabla_{A^{\text{Ch}}}\,\psi_0|^2=\frac{1}{8}|\nabla^{\mathrm{LC}}\,\omega|^2$.
\end{proof}
Blair's article \cite{Blair} is difficult to find, instead one can also look at his book \cite{Blair2} (proof of theorem 10.6) for the calculation used above. In dimension $4$, LeBrun gave a different proof of this calculation using Penrose spinor calculus (\cite{Penrose}, proposition 2.1).  
\subsection{The mass formula}
We adapt Witten's spectacular observation on computing the ADM mass $m(M,g)$ of a spin manifold in terms of a harmonic spinor. To set up the proof of the derivation of the mass formula \eqref{mass formula}, we need a lemma and a proposition.\par
We take a frame $(e_1,\dots,e_n)$, parallel at an arbitrary point $p\in M.$ The dual coframe is given by $(e^1,\dots,e^n)$.
\begin{lemma}\label{lemma 1}
    Let $N\subset M$ be any bounded subset of $M$ with $\partial N$ smooth. Then for any $\varphi\in\Gamma(S),$ we have
    \begin{align*}
        \int_N\big(|\nabla_A\varphi|^2-|D_A\varphi|^2+\frac{s}{4}|\varphi|^2+\frac{1}{2}\langle \mathrm{cl}(F_A)\varphi,\varphi\rangle\big)\hspace{0.5 ex}d\mathrm{v}_g=\int_{\partial N} \sum_{i=1}^n \langle\varphi,L_i\varphi\rangle * e^i
    \end{align*}
    where $L_i=\sum_{j=1}^n\big(\delta_{ij}+\mathrm{cl}(e_i)  \mathrm{cl}(e_j)\big)\nabla_j$. 
\end{lemma}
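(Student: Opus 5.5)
The plan is to derive the identity from a pointwise divergence formula combined with the Lichnerowicz--Weitzenb\"ock formula for $D_A$ on the spin$^\mathbb{C}$ bundle, and then apply Stokes' theorem on $N$. Everything is computed at an arbitrary point $p$ in the frame $(e_1,\dots,e_n)$, which is parallel at $p$, so that $\nabla_{e_i}e_j(p)=0$ and $d(*e^i)(p)=\mathrm{div}(e_i)(p)\,d\mathrm{v}_g=0$. Since both sides of the final pointwise identity are frame independent, it suffices to verify it at $p$.

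First I introduce the $(n-1)$-form $\alpha=\sum_i\langle\varphi,L_i\varphi\rangle *e^i$, with $\langle\,,\,\rangle$ the real part of the Hermitian product. At $p$ we have $d\alpha=\sum_i e_i\langle\varphi,L_i\varphi\rangle\,d\mathrm{v}_g$. Next I split $L_i=\nabla_i+\mathrm{cl}(e_i)D_A$, using $D_A=\sum_j\mathrm{cl}(e_j)\nabla_j$. The divergence then breaks into two pieces.
\begin{enumerate}
\item The first piece is $\sum_i e_i\langle\varphi,\nabla_i\varphi\rangle=|\nabla_A\varphi|^2-\langle\varphi,\nabla_A^*\nabla_A\varphi\rangle$. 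This uses metric compatibility of $\nabla_A$ and $\nabla_A^*\nabla_A=-\sum_i\nabla_i\nabla_i$ at $p$.
\item The second piece is $\sum_i e_i\langle\varphi,\mathrm{cl}(e_i)D_A\varphi\rangle=\sum_i\langle\nabla_i\varphi,\mathrm{cl}(e_i)D_A\varphi\rangle+\langle\varphi,D_A^2\varphi\rangle$. Here I use that Clifford multiplication is parallel and that $\mathrm{cl}(e_i)$ is skew-adjoint. The first term equals $-|D_A\varphi|^2$ by skew-adjointness.
\end{enumerate}

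Adding the two pieces gives
\begin{align*}
d\alpha=\big(|\nabla_A\varphi|^2-|D_A\varphi|^2+\langle\varphi,(D_A^2-\nabla_A^*\nabla_A)\varphi\rangle\big)\,d\mathrm{v}_g.
\end{align*}
I then apply the Weitzenb\"ock formula $D_A^2=\nabla_A^*\nabla_A+\frac{s}{4}+\frac12\mathrm{cl}(F_A)$, the same one already used in the proof of lemma \ref{Norm equality}. This makes the integrand exactly the one in the statement.

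Finally I integrate over $N$ and apply Stokes' theorem, $\int_N d\alpha=\int_{\partial N}\alpha$, which is valid because $N$ is bounded with smooth boundary and $\varphi$ is smooth on $\overline N$.

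The main obstacle is bookkeeping of conventions. Two things must be checked. First, $\mathrm{cl}(e_i)\mathrm{cl}(e_j)+\mathrm{cl}(e_j)\mathrm{cl}(e_i)=-2\delta_{ij}$ must be the convention that makes $\delta_{ij}+\mathrm{cl}(e_i)\mathrm{cl}(e_j)$ yield the clean splitting above. Second, the real part must be taken consistently, so that the term $\frac12\langle\mathrm{cl}(F_A)\varphi,\varphi\rangle$ comes out real. This holds because $\mathrm{cl}(F_A)$ is self-adjoint, since $F_A$ is imaginary-valued. I would check the signs first in a flat model before trusting the general computation.
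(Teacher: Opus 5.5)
Your proposal is correct and follows the paper's proof: the paper also writes $|\nabla_A\varphi|^2$ and $-|D_A\varphi|^2$, in a frame parallel at a point, as divergences of $\langle\varphi,\nabla_i\varphi\rangle$ and $\langle\varphi,\mathrm{cl}(e_i)D_A\varphi\rangle$ plus the terms $\langle\varphi,\nabla_A^*\nabla_A\varphi\rangle$ and $-\langle\varphi,D_A^2\varphi\rangle$, and then concludes with the Weitzenb\"ock formula and the divergence theorem. One small correction to your list of things to check: the splitting $L_i=\nabla_i+\mathrm{cl}(e_i)D_A$ holds by definition and does not use the Clifford relation, which enters only through the skew-adjointness of $\mathrm{cl}(e_i)$ and the sign convention in the Weitzenb\"ock formula.
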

\begin{proof} We make the following calculations.
    \begin{align*}
        -|D_A\varphi|^2&=\sum_{i,j=1}^n\langle-\mathrm{cl}(e_i)\nabla_i\varphi,\mathrm{cl}(e_j)\nabla_j\varphi\rangle\\
        &=\sum_{i,j=1}^n\langle\nabla_i\varphi,\mathrm{cl}(e_i)  \mathrm{cl}(e_j)\nabla_j\varphi\rangle\\
        &=\sum_{i,j=1}^n\nabla_i\langle\varphi,\mathrm{cl}(e_i)  \mathrm{cl}(e_j)\nabla_j\varphi\rangle-\langle\varphi,D_A^2\varphi\rangle
    \end{align*}
    and 
    \begin{align*}
        |\nabla_A\varphi|^2&=\sum_{i=1}^n\langle\nabla_i\varphi,\nabla_i\varphi\rangle\\
        &=\sum_{i=1}^n\nabla_i\langle\varphi,\nabla_i\varphi\rangle-\sum_{i=1}^n\langle\varphi,\nabla_i\nabla_i\varphi\rangle\\
        &=\sum_{i=1}^n\nabla_i\langle\varphi,\nabla_i\varphi\rangle+\langle\varphi,\nabla_A^*\nabla_A\varphi\rangle
    \end{align*}
Finally we complement these calculations above with the Weitzenb\"ock formula
\begin{align*}
    D_A^2=\nabla_A^*\nabla_A+\frac{s}{4}+\frac{1}{2}\mathrm{cl}(F_A)
\end{align*}
and the divergence theorem to finish the proof.
\end{proof}
\begin{proposition}\label{proposition 1}
    Let $(M,g,J)$ be an ALE almost K\"ahler manifold with $\tau>\frac{n-2}{2}$ and $s,|\nabla^{\mathrm{LC}}J|^2\in L^1$. Then in any asymptotic coordinate system, the ADM mass is given by 
    \begin{align*}
        \mathfrak{m}(M,g)=\lim_{r\rightarrow \infty } \frac{1}{2(2m-1)\pi^m}\int_{S_r/\Gamma}\theta\wedge\omega^{m-1} 
    \end{align*}
    for any $1$-form $\theta$ with $d\theta=iF_{A^{\mathrm{Ch}}}$ on the end $M_\infty$. 
\end{proposition}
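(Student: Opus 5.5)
The plan is to run Witten's boundary computation with the canonical spinor $\psi_0$ of Lemma \ref{lemma 1} (the first lemma of that name), with $A=A^{\mathrm{Ch}}$, on the bounded regions $N_r=M\setminus\{\text{end beyond }S_r/\Gamma\}$. First I would show that the interior integrand in the integral identity (the second Lemma \ref{lemma 1}) vanishes identically for $\varphi=\psi_0$. Indeed $D_{A^{\mathrm{Ch}}}\psi_0=0$. Moreover, as in the proof of Lemma \ref{Norm equality}, $\tfrac12\langle \mathrm{cl}(F_{A^{\mathrm{Ch}}})\psi_0,\psi_0\rangle=-\tfrac{s+s^*}{8}$, and
\[
|\nabla_{A^{\mathrm{Ch}}}\psi_0|^2=\tfrac18|\nabla^{\mathrm{LC}}\omega|^2=\tfrac{s^*-s}{8}.
\]
Adding $\tfrac s4$, the integrand is $0$ pointwise. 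Hence, for every $r$,
\[
\int_{S_r/\Gamma}\sum_i\langle\psi_0,L_i\psi_0\rangle *e^i=0 .
\]
The whole content of the proposition is then to split this boundary term into a Levi-Civita (gravitational) piece and a connection piece, and to identify them with the ADM integrand and with $\theta\wedge\omega^{m-1}$ respectively.

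Next I would work on the end, lifted to $\mathbb{R}^n\setminus \overline{B_R(0)}$ so that everything is $\Gamma$-equivariant. I would take the orthonormal frame $e_i$ obtained by Gram–Schmidt from $\partial_i$, which differs from $\partial_i$ by $O(r^{-\tau})$ with first derivatives $O(r^{-\tau-1})$. Since $\omega$ is asymptotic to a constant form $\omega_0$ (its $\nabla^{\mathrm{LC}}$ decays), I can arrange $J$ to be asymptotic to a constant complex structure. In the induced trivialization of $S$ and $K^{-1}$, I would write
\[
\nabla_{A^{\mathrm{Ch}}}=\nabla^{\mathrm{spin}}+\tfrac{i}{2}\theta' ,
\]
where $\nabla^{\mathrm{spin}}$ is the Levi-Civita spin connection of the frame and $\theta'$ is a real $1$-form on the end with $d\theta'=iF_{A^{\mathrm{Ch}}}$ (up to the sign convention fixed by the paper). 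In this trivialization $\psi_0$ is a constant unit spinor plus $O(r^{-\tau})$. The Levi-Civita part of $\sum_i\langle\psi_0,L_i\psi_0\rangle*e^i$ is then exactly Witten's expression. Its limit is $\tfrac14\int_{S_r/\Gamma}(\partial_ig_{ij}-\partial_jg_{ii})\partial_j\righthalfcup dx$ plus $o(1)$ terms, using $\tau>\frac{n-2}{2}$ so that the quadratic error terms are $O(r^{-2\tau-1})\cdot r^{n-1}\to0$. By Definition \ref{mass}, this is a universal multiple of $\mathfrak m(M,g)$.

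For the connection piece, I would evaluate
\[
\Big\langle\psi_0,\sum_j\big(\delta_{ij}+\mathrm{cl}(e_i)\mathrm{cl}(e_j)\big)\tfrac i2\theta'_j\psi_0\Big\rangle .
\]
The diagonal terms cancel, leaving the terms $i\neq j$ with $\langle\psi_0,\mathrm{cl}(e_i\wedge e_j)\psi_0\rangle$. Since $\psi_0$ spans the $-mi$ eigenspace of $\mathrm{cl}(\omega)$ and $\Lambda^{0,0}$ is annihilated by the $(2,0)+(0,2)$ parts, this expectation equals $c\,\omega_{ij}$ for an explicit constant $c$. The connection piece thus reduces to a multiple of $\sum_{ij}\omega_{ij}\theta'_j *e^i=\pm *(J\theta')$. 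I would then use the pointwise identity $\theta'\wedge\omega^{m-1}/(m-1)!=*(J\theta')$ restricted to hypersurfaces (true for $\omega_0$ and up to $O(r^{-\tau})$ errors in general) to rewrite it as a multiple of $\int\theta'\wedge\omega^{m-1}$. Equating the two pieces and tracking constants should give the stated normalization $\frac{1}{2(2m-1)\pi^m}$.

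Finally I must pass from the particular $\theta'$ to an arbitrary $\theta$ with $d\theta=iF_{A^{\mathrm{Ch}}}$ on $M_\infty$. The difference $\theta-\theta'$ is closed, and $\omega^{m-1}$ is closed, so by Stokes $\int_{S_r/\Gamma}(\theta-\theta')\wedge\omega^{m-1}$ is independent of $r$. Since $H^1(M_\infty)$ is trivial for the spherical space form at infinity ($n\ge4$), $\theta-\theta'$ is exact and this integral is actually $0$.

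The main obstacle I expect is the careful identification of the connection term: getting the Clifford expectation $\langle\psi_0,\mathrm{cl}(e_i\wedge e_j)\psi_0\rangle$ and the Hodge-star identity with correct constants and signs, and controlling the error terms. The latter requires that $\theta'$ decays like $O(r^{-\tau-1})$, and that the non-parallel part of $\psi_0$, which is controlled by $|\nabla^{\mathrm{LC}}\omega|$ via Lemma \ref{Norm equality}, contributes only $o(1)$ on $S_r$; this is where the hypothesis $|\nabla^{\mathrm{LC}}J|^2\in L^1$ would be used.
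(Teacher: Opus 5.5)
Your proposal follows the paper's proof step for step. It uses the same pointwise vanishing of the interior integrand for $(\psi_0,A^{\mathrm{Ch}})$, and the same split of $\sum_i\langle\psi_0,L_i\psi_0\rangle *e^i$ into a Levi-Civita piece matched with the ADM integrand and a connection piece turned into $\theta\wedge\omega^{m-1}$ via $\langle\psi_0,\mathrm{cl}(e_i)\mathrm{cl}(e_j)\psi_0\rangle\propto\omega_{ij}$. It also passes to arbitrary $\theta$ through $H^1(M_\infty)=0$, exactly as the paper does.

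The one difference is that you trivialize with a coordinate Gram--Schmidt frame instead of a $J$-adapted one. The identity $\theta\wedge\omega^{m-1}/(m-1)!=\pm *(J\theta)$ is still exact, so no error term arises there. Your choice makes explicit what the paper leaves implicit when it invokes Bartnik: $\psi_0$ must differ from a constant spinor by $O(r^{-\tau})$, with first derivatives $O(r^{-\tau-1})$. This rate does not follow from $|\nabla^{\mathrm{LC}}J|^2\in L^1$, nor from $|\nabla^{\mathrm{LC}}\omega|^2=s^*-s=O(r^{-\tau-2})$, which gives only $O(r^{-\tau/2})$. It does follow because $\omega$ is bounded and harmonic on the ALE end (closed, and co-closed since $*\omega=\omega^{m-1}/(m-1)!$), so weighted elliptic estimates give it.
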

\begin{proof}
    Taking the Bartnik-Chru\'sciel coordinate invariance of the mass \cite{Bartnik, Chruciel}, we begin by first checking that the assertion is true in a particular asymptotic coordinate system and for a particular choice of $\theta$.\par 
    We set up the calculations in local coordinates near ``infinity". Let $U\subset M\rightarrow Q$ be a local section of the frame bundle $Q.$ $e$ consists of an orthonormal frame $e=(e_1,\dots,e_n)$ of vector fields defined on the open set $U\subset M.$ The local connection form $Z^e=e^*(Z):TU\rightarrow \mathfrak{so}(n)$ is given by the formula:
    \begin{align*}
        Z^e=\sum_{k<l}w_{kl}E_{kl}
    \end{align*}
    where the forms $w_{kl}$ denote the forms defining the Levi-Civita connection, $w_{kl}=g(\nabla e_k,e_l)$ and $E_{kl}$ are the standard basis matrices of the Lie algebra $\mathfrak{so}(n).$ Analogously, we fix a section $s:U\rightarrow P_1$ of the $U(1)$-principal bundle associated to the canonical spin$^\mathbb{C}$-structure and obtain a local connection form 
    \begin{align*}
        A_s=s^*(A):TU\rightarrow i\mathbb{R}
    \end{align*}
    $A_s$ is an imaginary valued one form defined on the set $U.$ A section $\varphi\in\Gamma(U,S)$ of the spinor bundle $S$ over $U$ can be described by a function $\varphi:U\rightarrow\mathbb{C}^{\big(2^{\frac{n}{2}}\big)}$ using a trivialisation of $S$ over $U$ and using the connection $A$ on the corresponding $U(1)$-bundle, we lift the Levi-Civita connection on $S.$ Finally the covariant derivative of $\varphi$ is computed according to the formula \cite{Friedrich}:
    \begin{align}\label{covariant deribative of a spinor}
        \nabla_A\varphi=d\varphi+\frac{1}{2}\sum_{k<l}w_{kl}\,\mathrm{cl}(e_k)\mathrm{cl}(e_l)\varphi+\frac{1}{2}A_s\varphi
    \end{align}
    Now we take $\varphi=\psi_0,A=A^{\mathrm{Ch}}$ and a bounded subset $N\subset M$ with $\partial N=S_r/\Gamma$, lemma \ref{Norm equality} and \ref{lemma 1} together give us
    \begin{align}
        \int_{S_r/\Gamma} \sum_{i=1}^n \langle\psi_0,L_i\psi_0\rangle *e^i=0
    \end{align}
    Using \eqref{covariant deribative of a spinor} we compute the integrand. First we take care of the part which involves the connection $A^{\mathrm{Ch}}$.
    \begin{align*}
        &\sum_{i,j=1}^n \big\langle \psi_0,\big(\delta_{ij}+\mathrm{cl}(e_i)\mathrm{cl}(e_j)\big)A_s^{\mathrm{Ch}}(e_j)\psi_0\big\rangle* e^i\\
        &=\sum_{i,j=1}^n \big\langle \psi_0,\mathrm{cl}(e_i)\mathrm{cl}(e_j)A_s^{\mathrm{Ch}}(e_j)\psi_0\big\rangle* e^i+\sum_{i=1}^m\big\langle\psi_0,A_s^{\mathrm{Ch}}(e_i)\psi_0\big\rangle* e^i\\ 
        &=\sum_{i=1}^n\big\langle\psi_0,\mathrm{cl}(e_i)\mathrm{cl}(A_s^{\mathrm{Ch}})\psi_0\big\rangle *e^i +\sum_{i=1}^m\big\langle\psi_0,A_s^{\mathrm{Ch}}(e_i)\psi_0\big\rangle* e^i
    \end{align*}
We use explicit formula for Clifford action of a $1$-form to simplify it further. For a complexified $1$-form $\alpha\in\Omega^1\otimes\mathbb{C}$ and a spinor $\xi\in\Omega^{0,k}$, the Clifford action of $\alpha=(\alpha^{0,1}+\alpha^{1,0})$ on $\xi$ is given by the following formula:
\begin{align}\label{Clifford one form}
\mathrm{cl}(\alpha)\xi=\sqrt{2}\big(\alpha^{0,1}\wedge\xi-\overline{\alpha^{1,0}}\righthalfcup\xi\big) 
\end{align}
$\overline{\alpha^{1,0}}\righthalfcup\xi\in\Omega^{0,k-1}$ is the contraction of $\xi$ by $(\overline{\alpha^{1,0}})^{\#}$ defined by $\overline{\alpha^{1,0}}\righthalfcup\xi:=\iota_{(\overline{\alpha^{1,0}})^{\#}}(\xi)$, where $(\overline{\alpha^{1,0}})^{\#}$ is the vector field metric dual to $\overline{\alpha^{1,0}}$. Using this we notice
\begin{align*}
    &\big\langle\psi_0,\mathrm{cl}(e_i)\mathrm{cl}(A_s^{\mathrm{Ch}})\psi_0\big\rangle +\big\langle\psi_0,A_s^{\mathrm{Ch}}(e_i)\psi_0\big\rangle \\
    &=\big\langle\psi_0,\mathrm{cl}(e_i) \big(\sqrt{2}\,\psi_0 (A_s^{\mathrm{Ch}})^{0,1} \big)+A_s^{\mathrm{Ch}}(e_i)\psi_0\big\rangle\\ 
    &=\big\langle \psi_0,-2\psi_0\,e_i^{0,1}\righthalfcup(A_s^{\mathrm{Ch}})^{0,1}+A_s^{\mathrm{Ch}}(e_i)\psi_0\big\rangle\\
    &=-2\big\langle\psi_0,\psi_0 (A_s^{\mathrm{Ch}})^{0,1}(e_i^{0,1})\big\rangle+A_s^{\mathrm{Ch}}(e_i)\\
    &=-2\big\langle\psi_0, \Big(\frac{1}{2}(A_s^{\mathrm{Ch}}+i A_s^{\mathrm{Ch}}\circ J)\Big)\Big(\frac{1}{2}(e_i+i Je_i)\Big)\psi_0+A_s^{\mathrm{Ch}}(e_i)\psi_0\big\rangle\\ 
    &=\big\langle\psi_0,\big(-A_s^{\mathrm{Ch}}(e_i)-iA_s^{\mathrm{Ch}}(Je_i)+A_s^{\mathrm{Ch}}(e_i)\big)\psi_0\big\rangle\\
    &=-iA_s^{\mathrm{Ch}}(Je_i)
\end{align*}
One can infact choose an orthonormal basis at $p\in M$ of the form $\{e_i\}=\{e_k,Je_k\}$. Then we get $\omega=\sum_k e^k\wedge Je^k$ and 
\begin{align*}
    \sum_{i=1}^n iA_s^{\mathrm{Ch}}(Je_i)*e_i= iA_s^{\mathrm{Ch}}\wedge\frac{\omega^{m-1}}{(m-1)!}
\end{align*}
Now the total integrand reads
    \begin{align*}
        &\sum_{i=1}^n\langle \psi_0,L_i\psi_0\rangle* e^i\\
        &=\sum_{i,j=1}^n \big\langle\psi_0,\big(\delta_{ij}+\mathrm{cl}(e_i)\mathrm{cl}(e_j)\big)\nabla^{\mathrm{Ch}}_j\psi_0\big\rangle* e^i\\
        &=\frac{1}{4}\sum_{k\neq l}\sum_{i,j=1}^n\big\langle\psi_0,\big(\delta_{ij}+\mathrm{cl}(e_i)\mathrm{cl}(e_j)\big)w_{kl}(e_j)\mathrm{cl}(e_k)\mathrm{cl}(e_l)\psi_0\big\rangle* e^i-\frac{i}{2}A_s^{\mathrm{Ch}}\wedge\frac{\omega^{m-1}}{(m-1)!}\\
        &=\frac{1}{4}\sum_{k\neq l}\sum_{i\neq j}w_{kl}(e_j)\big\langle\psi_0,\mathrm{cl}(e_i)\mathrm{cl}(e_j)\mathrm{cl}(e_k)\mathrm{cl}(e_l)\psi_0\big\rangle* e^i-\frac{i}{2}A_s^{\mathrm{Ch}}\wedge\frac{\omega^{m-1}}{(m-1)!}\\
        &=\frac{1}{4}\sum_{k\neq l}\sum_{i\neq j}w_{kl}(e_j)|\psi_0|^2(\delta_{il}\delta_{jk}-\delta_{ik}\delta_{jl})* e^i-\frac{i}{2}A_s^{\mathrm{Ch}}\wedge\frac{\omega^{m-1}}{(m-1)!}\\
        &=\frac{1}{4}\sum_{i\neq j} \big(\omega_{ji}(e_j)-\omega_{ij}(e_j)\big)* e^i-\frac{i}{2}A_s^{\mathrm{Ch}}\wedge\frac{\omega^{m-1}}{(m-1)!}
    \end{align*}
Since the Christoffel symbols $\bm{\Gamma}_{ij}^k=g(\nabla_{e_{i}}e_j,e_k)$ are symmetric in $i\leftrightarrow j,$ we get $w_{jk}(e_i)=w_{ik}(e_j).$ We notice that
\begin{align*}
    \omega_{ji}(e_j)-\omega_{ij}(e_j)=\omega_{ji}(e_j)-\omega_{jj}(e_i)
\end{align*}
On the other hand the expression $(\partial_j g_{ji}-\partial_i g_{jj})$ also reads
\begin{align*}
    (\nabla_j g_{ji}-\nabla_i g_{jj}) &=g(\nabla_{e_j}e_j,e_i)+g(e_j,\nabla_j e_i)-g(\nabla_i e_j,e_j)-g(e_j,\nabla_i e_j)\\
    &=w_{ji}(e_j)+w_{ij}(e_j)-2w_{jj}(e_i)\\
    &=w_{ji}(e_j)+w_{jj}(e_i)-2w_{jj}(e_i)\\
    &=w_{ji}(e_j)-w_{jj}(e_i)
\end{align*}
Therefore we get
\begin{align}\label{eq 35}
    \sum_{i=1}^n\langle \psi_0,L_i\psi_0\rangle* e^i
    =\frac{1}{4}\sum_{i\neq j}(\nabla_j g_{ji}-\nabla_i g_{jj})* e^i-\frac{i}{2}A_s^{\mathrm{Ch}}\wedge\omega^{m-1}
\end{align}  
Bartnik \cite{Bartnik} showed us that 
\begin{align*}
    \lim_{r\rightarrow \infty}\int_{S_r/\Gamma}\sum_{i\neq j}(\nabla_j g_{ji}-\nabla_i g_{jj})* e^i=\int_{S_\infty/\Gamma} \sum\big(\partial_i(\phi^*g)_{ij}(x)-\partial_j(\phi^*g)_{ii}(x)\big)\partial_j \righthalfcup dx
\end{align*}
Using \eqref{eq 35} we therefore have
\begin{align*}
    \mathfrak{m}(M,g)=\lim_{r\rightarrow \infty} \frac{1}{2(2m-1)\pi^m}\int_{S_r/\Gamma}\theta\wedge\omega^{m-1}
\end{align*}
for a particular $1$-form $\theta=A_s^{\mathrm{Ch}}$ with $d\theta=iF^{\mathrm{Ch}}$ on the end $M_{\infty}$. On the other hand, since $b_1(M_\infty)=0$, any other $1$-form $\tilde\theta$ on $M_\infty$ with $d\tilde\theta=iF_{A^{\mathrm{Ch}}}$ is given by $\tilde\theta=\theta+df$ for a function $f$.  Choosing a different $\theta$ would thus change the integrand by an exact form, leaving the integral on each $S_r/\Gamma$ completely unchanged.\par 
Finally, the limit is independent of the asymptotic coordinate system. Notice that
\begin{align*}
    d(\theta\wedge\omega^{m-1})=iF_{A^{\mathrm{Ch}}}\wedge\omega^{m-1}=\frac{s+s^{*}}{4m}\omega^m=\frac{(m-1)!}{4}(s+s^*)d\mathrm{v}_g
\end{align*}
Thereafter if $N_1$ is a real hypersurface in the region $E_r$ exterior to $S_r/\Gamma$ such that $S_r/\Gamma$ and $N_1$ are the boundary components of a bounded region $V\subset E_r$, then 
\begin{align*}
    &\frac{4}{(m-1)!}\bigg|\int_{N_1}\theta\wedge\omega^{m-1}-\int_{S_r/\Gamma}\theta\wedge\omega^{m-1}\bigg|\\&=\bigg|\int_V (s+s^{*})d\mathrm{v}_g\bigg|\\ 
    &\leq \int_V |s+s^{*}|d\mathrm{v}_g\\&\leq \int_{E_r} |s+s^{*}|d\mathrm{v}_g
    \longrightarrow 0 \text{ as }r\longrightarrow\infty
\end{align*}
since by hypothesis $(s+s^{*})\in L^1$.
\end{proof}
\begin{proof}[Proof of theorem \ref{Mass formula}]
Choose a $1$-form $\theta$ defined on $M_\infty$ such that
\begin{align*}
d\theta = iF_{A^{\mathrm{Ch}}}
\end{align*}
Next choose asymptotic coordinates on $M_\infty$, and let $r$ be  corresponding coordinate radius at infinity. Next, choose a smooth cut--off function $f : M \rightarrow [0,1]$ which vanishes identically on the compact region $M \setminus M_\infty$ and satisfies $f \equiv 1$ for $r \ge R_0$, where $R_0$ is a sufficiently large real number. Define
\begin{align*}
\xi := iF_{A^{\mathrm{Ch}}} - d(f\theta)
\end{align*}
Hence $\xi$ is a closed $2$-form with compact support. Moreover, $\xi$ is cohomologous to $iF_{A^{\mathrm{Ch}}}$ and hence represents
the class $2\pi\,\iota^{-1}(c_1)=\iota^{-1}(iF_{A^{\mathrm{Ch}}})$ in the
compactly supported cohomology $H^2_c(M)$.\par 
For $\tilde R > R_0$, let $M_{\tilde R}$ be the compact manifold with boundary obtained by truncating $M$ at radius $\tilde R$, so that $\partial M_{\tilde R} = S_{\tilde R}/\Gamma$. Using the identity \cite{Blair}
\begin{align*}
iF_{A^{\mathrm{Ch}}} \wedge \omega^{m-1}
= \frac{s+s^{*}}{4m}\,\omega^m
= \frac{(m-1)!}{4}(s+s^{*})\, d\mathrm{v}_g
\end{align*}
we obtain
\begin{align*}
\frac{(m-1)!}{4}\int_{M_{\tilde R}} (s+s^{*})\, d\mathrm{v}_g
= \int_{M_{\tilde R}} iF_{A^{\mathrm{Ch}}}\wedge \omega^{m-1}
= \int_{M_{\tilde R}} \big(\xi + d(f\theta)\big)\wedge \omega^{m-1}
\end{align*}
It follows that
\begin{align*}
2\pi \,\langle \iota^{-1}(c_1),[\omega]^{m-1}\rangle
&= \int_M \xi \wedge \omega^{m-1} \\
&= \int_{M_{\tilde R}} \xi \wedge \omega^{m-1} \\
&= - \int_{M_{\tilde R}} d(f\theta \wedge \omega^{m-1})
   + \frac{(m-1)!}{4}\int_{M_{\tilde R}} (s+s^{*})\, d\mathrm{v}_g \\
&= - \int_{\partial M_{\tilde R}} f\theta \wedge \omega^{m-1}
   + \frac{(m-1)!}{4}\int_{M_{\tilde R}} (s+s^{*})\, d\mathrm{v}_g \\
&= - \int_{S_{\tilde R}/\Gamma} \theta \wedge \omega^{m-1}
   + \frac{(m-1)!}{4}\int_{M_{\tilde R}} (s+s^{*})\, d\mathrm{v}_g 
\end{align*}
Rearranging terms yields
\begin{align*}
\frac{1}{2(2m-1)\pi^m}
\int_{S_{\tilde R}/\Gamma} \theta \wedge \omega^{m-1}
=
-\frac{\langle \iota^{-1}(c_1),[\omega]^{m-1}\rangle}{(2m-1)\pi^{m-1}}
+\frac{(m-1)!}{8(2m-1)\pi^m}
\int_{M_{\tilde R}} (s+s^{*})\, d\mathrm{v}_g 
\end{align*}
Finally, taking the limit as $\tilde R \to \infty$ and invoking the mass
formula from proposition \ref{proposition 1}, we obtain
\begin{align*}
\mathfrak{m}(M,g)
=
-\frac{\langle \iota^{-1}(c_1),[\omega]^{m-1}\rangle}{(2m-1)\pi^{m-1}}
+\frac{(m-1)!}{8(2m-1)\pi^m}
\int_M (s+s^{*})\, d\mathrm{v}_g
\end{align*}
which completes the proof.
\end{proof}
\section{Positive mass theorem and Penrose inequality in dimension $4$}\label{pmt}
\subsection{Capping off the end with LeBrun's $\Gamma$-capsule}
The proofs of theorems~\ref{16}--\ref{PMT} follow the strategy developed by LeBrun~\cite{LeBrun100}, with some modifications to account for the almost K\"ahler (rather than K\"ahler) setting. LeBrun's strategy relies on a natural symplectic compactification of the ALE symplectic $4$-manifold $(M,\omega)$, producing a closed symplectic $4$-manifold $(\widehat M,\widehat\omega)$ that contains $M$ as the complement of a compact symplectic domain. This construction makes it possible to invoke deep results from $4$-dimensional symplectic topology. We start by defining LeBrun's \textit{$\Gamma$-capsules}, which are needed to cap off the ALE symplectic manifolds $(M,\omega)$.\par 
For any finite subgroup $\Gamma\subset U(2)$ ($\Gamma\neq\{1\}$) that acts freely on the unit sphere $S^3\subset\mathbb{C}^2$, LeBrun \cite{LeBrun100} constructed $4$-dimensional compact connected symplectic orbifolds $(X_\Gamma,\omega_\Gamma)$ such that
\begin{enumerate}[label=(\roman*)]
\item $(X_\Gamma,\omega_\Gamma)$ contains exactly one singular point $p$.\\
\item $p$ has a neighbourhood symplectomorphic to $(B,\omega_0)/\Gamma$ for some standard open ball $B\subset\mathbb{C}^2$ centered at the origin, where $\Gamma$ acts on $(\mathbb{C}^2,\omega_0)$ in the tautological manner, as a subgroup of $U(2), \omega_0=dx^1\wedge dx^2+dx^3\wedge dx^4$ is the standard symplectic form $\mathbb{R}^4=\mathbb{C}^2$.\\
\item\label{3} There is a symplectic immersion $j:S^2 \looparrowright X_\Gamma\setminus \{p\}$, with at worst transverse positively oriented double points, such that 
\begin{align*}
    \int_{S^2} j^*[c_1(X_\Gamma\setminus\{p\},J]\geq 3
\end{align*}
for some, and hence any $\omega$-compatible almost complex structure $J$.
\end{enumerate}
\begin{definition}
Let $\Gamma\subset U(2)$ be any finite subgroup that acts freely on the unit sphere $S^3\subset\mathbb{C}^2$. Then
\begin{itemize}
    \item If $\Gamma\neq\{1\},$ a \textit{$\Gamma$-capsule} will mean one of the standard symplectic orbifolds $(X_\Gamma,\omega_\Gamma)$ satisfying conditions (1)--(3) that LeBrun constructed in §2 of \cite{LeBrun100}. The unique singular point $p\in X_\Gamma$ will then be called the base-point of the $\Gamma$-capsule.
    \item When $\Gamma=\{1\},$ we instead define the associated $\Gamma$-capsule $(X_\Gamma,\omega_\Gamma)$ to be $\mathbb{CP}^2$, equipped with its standard Fubini-Study symplectic structure. In this case the base-point $p$ of $X_\Gamma$ will simply mean $[0:0:1]\in\mathbb{CP}^2$.
    \end{itemize}
\end{definition}
The next proposition constitutes the first step in the symplectic compactification procedure. Its role is to show that, under suitable decay assumptions, the asymptotic symplectic structure of an ALE almost K\"ahler manifold can be normalized near infinity and identified with the standard symplectic structure on $\mathbb{R}^4$. This normalisation is essential, as it allows one to cap off the noncompact end by a compact symplectic domain in a canonical way. Before going into the proposition, we discuss the asymptotic symplectic structure of an ALE $4$-manifold.\par
Let $(M^4,g,J)$ be an ALE K\"ahler surface satisfying the hypothesis of theorem \ref{Penrose}, i.e., it has order $\tau>1$ and as $r\longrightarrow\infty, \nabla^{\mathrm{LC}}J=O(r^{-(2+\epsilon_2)})$ with $\epsilon_2>0$. We assume that there are $(x_1,\dots,x_4)$ asymptotic coordinates on the universal cover $\widetilde{M}_\infty (\cong \mathbb{R}^4\setminus \overline{B_R(0)})$ of $M_\infty$ in which the components of the metric satisfy 
\begin{align*}
    g_{jk}=\delta_{jk}+O\big(r^{-(1+\epsilon_1)}\big),\hspace{2 ex} |\partial_l\,g_{jk}|=O\big(r^{-(2+\epsilon_1)}\big)
\end{align*}
for some $\epsilon_1>0$, and such that the fundamental group $\Gamma$ of the end acts by rotations of the coordinates $(x_1,\dots,x_4)$ in a manner that preserves both the background model Euclidean metric $\delta$ and $g$.\par 
Define $\epsilon:=$ min$(\epsilon_1,\epsilon_2)$ and let $\nabla^\delta$ be the Levi-Civita connection of the Euclidean metric $\delta$. Our fall-off hypothesis implies that as $r\longrightarrow\infty$,
\begin{align*}
    \nabla^{\mathrm{LC}}=\nabla^\delta+\bm{\Gamma}^k_{ij}=\nabla^\delta+O \big(r^{-(2+\epsilon)}\big)
\end{align*}
Since $\nabla^{\mathrm{LC}}J=O(r^{-(2+\epsilon_2)}),\epsilon_2>\epsilon>0$, we have $\nabla^\delta J=O\big(r^{-(2+\epsilon)}\big)$. Now we identify all the tangent spaces of $\mathbb{R}^4$ in the usual way, using the flat Euclidean connection $\nabla^\delta$. Since $\nabla^\delta J=O\big(r^{-(2+\epsilon)}\big)$, the value of $J$ will approach a well-defined limit $J_0$ along some chosen radial ray, and we then extend this limit as a constant-coefficient tensor field on our asymptotic coordinate domain. Along the chosen ray, we then have $J-J_0=O\big(r^{-(1+\epsilon)}\big)$ and integrating along great circles in spheres of constant radius then shows that $J-J_0$ has $O\big(r^{-(1+\epsilon)}\big)$ fall-off everywhere. The same argument similarly also shows that the fall-off for derivative of $J:\nabla^\delta J$ is of the order $O\big(r^{-(2+\epsilon)}\big)$. All together we have the following: \textit{there is a $\delta$-compatible constant-coefficient almost-complex structure $J_0$ on $\mathbb{R}^4$ such that}
\begin{align*}
    J=J_0+O\big(r^{-(1+\epsilon)}\big),\hspace{2 ex} \nabla^\delta J=O\big(r^{-(2+\epsilon)}\big)
\end{align*}
Moreover after rotating our coordinates $(x_1,\cdots,x_4)$ if necessary, we may arrange $J_0$ to become the standard complex structure
\begin{align*}
    dx_1\otimes \frac{\partial}{\partial x_2}-dx_2\otimes \frac{\partial}{\partial x_1}+dx_3\otimes\frac{\partial}{\partial x_4}-dx_4\otimes\frac{\partial}{\partial x_3}
\end{align*}
on $\mathbb{C}^2= \mathbb{R}^4$. The argument given above is borrowed from §2 of \cite{HeinLebrun}.\par 
Since the action of the fundamental group $\Gamma$ preserves both $J$ and $\delta$, it now follows that $\Gamma\subset U(2)$. More importantly, we therefore automatically obtain fall-off conditions
\begin{align}\label{falloff}
    \omega=\omega_0+O\big(r^{-(1+\epsilon)}\big),\hspace{2 ex} \nabla^\delta\omega=O\big(r^{-(2+\epsilon)}\big)
\end{align}
where $\omega_0=dx_1\wedge dx_2+ dx_3\wedge dx_4$ is the standard symplectic form on $\mathbb{R}^4=\mathbb{C}^2$. Now we can state the proposition, which is essentially due to LeBrun \cite{LeBrun100}.
\begin{proposition}\label{prop1}
    Let $(M,g,J)$ be an ALE almost K\"ahler surface, let $\widetilde{M_\infty}$ be the end of the universal cover of $M_\infty$, the end of $M$ and let
    \begin{align*}
        (x_1,x_2,x_3,x_4):\widetilde{M_\infty}\rightarrow\mathbb{R}^4\setminus B
    \end{align*}
    be a diffeomorphism, where $B\subset\mathbb{R}^4$ is a standard closed ball of some large radius centered at the origin. Moreover, suppose that these asymptotics coordinates have been chosen in accordance with the above discussion, so that the K\"ahler form $\omega$ satisfies the fall-off conditions \eqref{falloff} in this coordinate system, while the action of $\pi_1(M_\infty)$ on $\widetilde{M_\infty}$ by deck transformations is represented in these coordinates by the action of a finite group $\Gamma\subset U(2)$ of unitary transformations, acting on $\mathbb{R}^4=\mathbb{C}^2$ in the usual way. Then there is a $\Gamma$ equivariant $C^2$-diffeomorphism $\Phi:\mathbb{R}^4\setminus C\rightarrow \mathbb{R}^4\setminus D$, where $C\subset \mathbb{R}^4$ is a standard closed ball centered at the origin, where $D\subset \mathbb{R}^4$ is a smooth $4$-ball whose boundary $\partial D$ is a $\Gamma$-invariant differentiable $S^3$, and where $B\subset C\cap D$, such that
    \begin{align*}
        \Phi^*\omega=\omega_0
    \end{align*}
    with $|\Phi(x)-x|=O\big(r^{-\epsilon}\big)$ and $|\Phi_*-I|=O\big(r^{-(1+\epsilon)}\big)$.
\end{proposition}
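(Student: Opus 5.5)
We will use a Moser-type argument adapted to the noncompact end, following LeBrun. First we write $\omega-\omega_0$ as an exact form with good decay. On $\mathbb{R}^4\setminus B$ both forms are closed. Since $H^2(\mathbb{R}^4\setminus B)=H^2(S^3)=0$, the difference is exact. To get a primitive with controlled fall-off we use the radial homotopy operator. Writing $\beta:=\omega-\omega_0=O(r^{-(1+\epsilon)})$ with $\nabla^\delta\beta=O(r^{-(2+\epsilon)})$ from \eqref{falloff}, we define
\begin{align*}
\alpha(x):=-\int_1^\infty \big(\iota_{\partial_t}(h^*\beta)\big)\,dt,\qquad h(x,t)=tx .
\end{align*}
Equivalently, $\alpha_x=-\int_1^\infty t\,(\iota_{x}\beta)_{tx}\,dt$. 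This integral converges because $|\beta(tx)|\,t\,|x|=O\big((tr)^{-(1+\epsilon)}\,t r\big)=O\big(t^{-\epsilon}r^{-\epsilon}\big)$. After the change of variables the $t$-integral behaves like $\int t^{-1-\epsilon}$, since pulling back the $1$-form part also contributes a factor of $t$. The Cartan formula together with $\beta\to 0$ at infinity then gives $d\alpha=\beta$ and
\begin{align*}
\alpha=O(r^{-\epsilon}),\qquad \nabla^\delta\alpha=O(r^{-(1+\epsilon)}).
\end{align*}
The operator commutes with linear maps, so $\alpha$ is automatically $\Gamma$-invariant because $\Gamma\subset U(2)$ preserves both $\beta$ and the dilations. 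The fact that the derivative bound on $\beta$ is only $O(r^{-(2+\epsilon)})$ explains why we will end up with a $C^2$ rather than a smooth diffeomorphism.

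Next we run Moser's path method. We set $\omega_t=\omega_0+t\beta$ for $t\in[0,1]$. For $r$ large, $|\beta|=O(r^{-1-\epsilon})$ is small, so $\omega_t$ is nondegenerate outside a large ball $C'$. We define the time-dependent vector field $X_t$ by $\iota_{X_t}\omega_t=-\alpha$. Then $|X_t|=O(r^{-\epsilon})$ and $|\nabla^\delta X_t|=O(r^{-(1+\epsilon)})$, and $X_t$ is $\Gamma$-equivariant. Its flow $\phi_t$ satisfies $\frac{d}{dt}\phi_t^*\omega_t=\phi_t^*(d\iota_{X_t}\omega_t+\beta)=0$, so $\phi_1^*\omega=\omega_0$.

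The step we expect to be the main obstacle is the domain bookkeeping: $X_t$ is only defined outside a ball, so we must make sure the flow is well defined and lands in the right region. Because $|X_t|$ is bounded by $Kr^{-\epsilon}\le K'$, points starting at radius $\ge \rho$ stay at radius $\ge\rho-K'$ for $t\le 1$. We therefore choose the standard ball $C$ with radius exceeding that of $B$ plus $K'$, and define $\Phi:=\phi_1$ on $\mathbb{R}^4\setminus C$. Integrating the bounds on $X_t$ along flow lines, using Gronwall for the derivative, gives
\begin{align*}
|\Phi(x)-x|=O(r^{-\epsilon}),\qquad |\Phi_*-I|=O(r^{-(1+\epsilon)}).
\end{align*}
$\Phi$ is injective because it is a time-one flow. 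Its image is the complement of $D:=\mathbb{R}^4\setminus\Phi(\mathbb{R}^4\setminus C)$, which is a smooth $4$-ball: $\partial D=\Phi(\partial C)$ is a $\Gamma$-invariant embedded $S^3$, and it is close to the round sphere in $C^1$, so it bounds a ball, which we can check by a radial isotopy. Enlarging $C$ if needed ensures $B\subset C\cap D$. Finally, the regularity of $\Phi$ is $C^2$ (a fortiori $C^1$, enough for the later gluing), given the smoothness of $\omega$ and the construction, while the stated estimates only use first derivatives.
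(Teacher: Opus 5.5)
\textbf{There is a genuine gap in your first step, the construction of the primitive, though it can be repaired.} The overall strategy is fine: Moser's path method with a decaying $\Gamma$-invariant primitive of $\beta=\omega-\omega_0$, then flow estimates via Gronwall. The later steps also go through as you wrote them: nondegeneracy of $\omega_t$ far out, the bounds $|X_t|=O(r^{-\epsilon})$ and $|\nabla^\delta X_t|=O(r^{-1-\epsilon})$, Gronwall, and the bookkeeping for $C$ and $D$. The paper itself gives no details and just imports LeBrun's argument, which uses only the fall-off \eqref{falloff}.

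\textbf{Why the primitive fails.} The radial homotopy operator integrated out to infinity does not converge under this fall-off. Your own bound says the integrand $t\,\beta_{tx}(x,\cdot)$ has size $O(t^{-\epsilon}r^{-\epsilon})$, and $\int_1^\infty t^{-\epsilon}\,dt=\infty$ for $\epsilon\le 1$. There is no extra factor $t^{-1}$ to gain: the factor $t$ from pulling back the $1$-form slot is already in $\alpha_x=-\int_1^\infty t(\iota_x\beta)_{tx}\,dt$. For the same reason the Cartan argument breaks. It needs $h_T^*\beta\to 0$ as $T\to\infty$, but $|h_T^*\beta|(x)=T^2|\beta(Tx)|\sim T^{1-\epsilon}r^{-1-\epsilon}$, which does not tend to zero; knowing only that $\beta\to0$ at infinity is not enough.

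This is an actual divergence, not just a weak estimate. Take $\gamma=r^{-1-\epsilon}(x_1\,dx_2-x_2\,dx_1)$ and $\beta=d\gamma$, which has exactly the fall-off $O(r^{-1-\epsilon})$, $\nabla^\delta\beta=O(r^{-2-\epsilon})$. For the Euler field $E$ one has $\iota_E\gamma=0$ and $\mathcal{L}_E\gamma=(1-\epsilon)\gamma$, so $\iota_E\beta=(1-\epsilon)\gamma$. Your integrand is then exactly $(1-\epsilon)t^{-\epsilon}\gamma_x$, whose integral over $[1,\infty)$ diverges. The operator from infinity only works when $\beta=O(r^{-2-\delta})$.

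\textbf{How to fix it.} Contract toward the origin instead.
\begin{enumerate}
\item Since $H^2(\mathbb{R}^4\setminus B)\cong H^2(S^3)=0$, write $\beta=d\gamma$ on the end and average $\gamma$ over $\Gamma$.
\item Set $\tilde\beta=d(\chi\gamma)$, with $\chi$ a radial cutoff vanishing near $B$ and equal to $1$ for $r\ge R_1$. This is a closed $\Gamma$-invariant form on all of $\mathbb{R}^4$ that agrees with $\beta$ far out.
\item Define $\alpha_x=\int_0^1 t\,\tilde\beta_{tx}(x,\cdot)\,dt$. Then $d\alpha=\tilde\beta$ by the ordinary Poincar\'e lemma, and $\alpha$ is $\Gamma$-invariant because the operator commutes with linear maps.
\item Split the integral at $t=R_1/r$. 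The inner piece is $O(r^{-1})$. The outer piece is bounded by $Cr^{-\epsilon}\int_{R_1/r}^1 t^{-\epsilon}\,dt=O(r^{-\epsilon})$, taking $\epsilon<1$ without loss of generality.
\item Differentiating under the integral gives inner and outer pieces $O(r^{-2})$ and $O(r^{-1-\epsilon})$, so $\nabla^\delta\alpha=O(r^{-1-\epsilon})$.
\end{enumerate}
With this $\alpha$ the rest of your argument goes through unchanged.

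\textbf{A minor point.} The decay rate of $\nabla^\delta\beta$ has nothing to do with local regularity. Since $\omega$ is smooth, your $\Phi$ is in fact smooth, and in particular $C^2$.
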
 
\begin{proof}
    This proposition is same as proposition~1.1 (\S 1) in \cite{LeBrun100}, with the K\"ahler condition replaced by the almost K\"ahler assumption. LeBrun’s proof of his proposition~1.1 does not rely on the integrability of $J$, but only on the fall-off conditions \eqref{falloff}. Consequently, his argument carries over verbatim to the present setting.
\end{proof}
Now we are in a position to explain the capping-off procedure to get a closed symplectic manifold $(\widehat M,\widehat \omega)$ from $(M,\omega)$. This is explained in the proof of theorem~3.1 in \cite{LeBrun100}. We include it here for completeness.\par
Say $M_\infty\cong (S^3/\Gamma)\times\mathbb{R}^+$. Choose a $\Gamma$-capsule $(X_\Gamma,\omega_\Gamma)$. Proposition \ref{prop1} tells us that the end $M_\infty$ contains an asymptotic region symplectomorphic to $(\mathbb{C}^2\setminus \overline {B_{\mathfrak{R}}},\omega_0)/\Gamma$ for some sufficiently large common radius $\mathfrak{R}$. On the other hand, the base point of the $\Gamma$-capsule has a neighbourhood symplectomorphic to $(B_\mathfrak{r},\omega_0)/\Gamma$ for some small common radius $\mathfrak{r}$, and by shrinking the radius if necessary we can guarantee that this ball-quotient in the $\Gamma$-capsule does not meet some chosen symplectically immersed $2$-sphere satisfying \ref{3}. We now inflate the $\Gamma$-capsule by replacing the symplectic form $\omega_\Gamma$ by $t^2\omega_\Gamma$ for some large $t>0$. In the inflated $\Gamma$-capsule, the base point now has a neighbourhood symplectomorphic to $(B_R(0),\omega_0)/\Gamma$, where $R=t\mathfrak{r}$. Therefore by taking $t$ to be sufficiently large, we may arrange that $R>\mathfrak{R}$. By now removing $\overline{B_\mathfrak{R}(0)}/\Gamma\in p$ from the $\Gamma$-capsule and $(\mathbb{C}^2\setminus \overline{B_R(0)})/\Gamma$ from $M_\infty$, we are then left with pieces that can be glued together symplectically along copies of $(B_R(0)\setminus \overline{B_\mathfrak{R}(0)})/\Gamma$ to produce a compact symplectic manifold $(\widehat{M},\widehat{\omega})$.
\subsection{Rescue à la McDuff–Taubes}
Now that we have obtained a closed symplectic manifold $(\widehat{M},\widehat{\omega})$, we may invoke deep results from four–dimensional symplectic geometry due to McDuff \cite{McDuff} and Taubes \cite{TaubesBook}. The proofs of Theorems~\ref{16}--\ref{18} are identical to their K\"ahler counterparts (theorem~3.1, proposition~3.3, proposition 3.4 in \cite{LeBrun100}), and we therefore refer the reader to \cite{LeBrun100} for the detailed arguments. The crucial point is that LeBrun’s method relies only on the symplectic capping-off construction, rather than on the integrability of the almost-complex structure. Consequently, the argument extends to the almost K\"ahler setting, provided that $J$ satisfies the decay condition
$
|\nabla^{\mathrm{LC}} J| 
= O\big(r^{-\frac{\eta}{2}}\big)$ as $r \longrightarrow \infty,
$
for some $\eta > 4$, as established above.\par 
The proofs of theorems \ref{Penrose} and \ref{PMT} can also be deduced from their K\"ahler counterparts in \cite{LeBrun100}. For the convenience of the reader, we briefly outline the main ideas in the proof of theorem~\ref{Penrose} (Penrose inequality), from which theorem~\ref{PMT} (positive mass theorem) immediately follows.
\begin{proof}[Proof of theorem \ref{Penrose}]
    As explained above, starting with an AE almost K\"ahler manifold $(M,g,J)$ with appropriate decay rate of $\nabla^{\mathrm{LC}}J$, the symplectic capping-off procedure produces a compact symplectic manifold $(\widehat{M},\widehat{\omega})$ by removing a standard symplectic end $\big(\mathbb{R}^4\setminus B_{\mathfrak{R}}(0),\omega_0\big)$ and replacing it with $\mathbb{CP}^2$ minus a ball, equipped with some multiple of the Fubini-Study symplectic form. In this setting, a projective line in $\mathbb{CP}^2$ gives us a symplectic $2$-sphere of self-intersection $+1$ in $(\widehat{M},\widehat{\omega})$.\par 
    Invoking corollary~1.5 from \cite{McDuff}, we deduce that $(\widehat{M},\widehat{\omega})$ is symplectomorphic to a blow up of $\mathbb{CP}^2$, equipped with some K\"ahler form, in a way that sends the given $2$-sphere to a projective line $\mathbb{CP}^1$ that avoids all the blown up points. Removing this ``\textit{line at infinity}", we this see that $M$ must be diffeomorphic to $\mathbb{R}^4\# k\overline{\mathbb{CP}^2}$, where $k=b_2(M)$, and $H_2(M,\mathbb{Z})$ is moreover generated by the homology classes of $k$ disjoint symplectic $2$-spheres $E_1,\dots,E_k\subset M$ of self-intersection $-1$.\par 
    Therefore, under the natural identification $H^2_c(M)=H_2(M,\mathbb{R})$ arising from the Poincare duality, we have $\iota^{-1} (-c_1)=\sum_{j=1} [E_j]$, as can be checked by integrating both sides against each of the homology generators $[E_j]$. Thus, the mass formula \eqref{mass formula} in the AE case gives us
    \begin{align*}
        \mathfrak{m}(M,g)=\frac{1}{3\pi}\sum_{j=1}^k\int_{E_j}\omega+\frac{1}{12\pi^2}\int_M \Big(s+\frac{1}{4}|\nabla^{\mathrm{LC}}J|^2\Big) d\mathrm{v}_g
    \end{align*}
The main remaining step in the proof is to show that each of the homology classes $E_j$ can in fact be represented by a finite sum of $J$–holomorphic curves 
with positive integer multiplicities. This is precisely the technical heart of the argument, and at this point we appeal to the proof of theorem~3.5 (\S 3) in LeBrun \cite{LeBrun100}. The crucial observation is again that LeBrun’s reasoning does not depend on the integrability of the almost-complex structure, but only on the symplectic nature of the compactification and on Taubes’ ``SW=Gr" \cite{TaubesBook} correspondence in the case $b_2^+ = 1$ \cite{Li}. Since our compactified manifold $(\widehat{M},\widehat{\omega})$ is symplectic and satisfies $b_2^+ = 1$, Taubes' results apply verbatim in the almost K\"ahler setting.\par 
The classes $E_j$ extend naturally to homology classes in 
$H_2(\widehat{M},\mathbb{Z})$. By construction, these classes satisfy the same intersection-theoretic properties as in LeBrun’s setting: they are represented by symplectic exceptional classes with negative self-intersection and positive pairing with the symplectic form.\par
At this stage one invokes Taubes’ theorem identifying Seiberg–Witten basic classes with Gromov invariants on symplectic $4$–manifolds with $b_2^+ = 1$. The relevant Seiberg–Witten invariants are non-trivial, and therefore each such class $E_j$ is represented by a (possibly singular) $J$–holomorphic curve with positive integer multiplicities. One more key point in LeBrun's proof is that one needs to do the symplectic compactification a bit more carefully than we explained above because we would want these $J$-holomorphic curves coming from Taubes' powerful machinery to exist completely inside the image of $M$ inside $\widehat{M}$. The argument uses only symplectic topology and Taubes’ correspondence and not the integrability of $J$, hence it carries over unchanged to the present almost K\"ahler context.\par 
Let $\mathcal{E}_j$ be the $J$-holomorphic curve which represents the homology class $[E_j]$. This means that $\int_{E_j}\omega$ is in fact exactly the area of $\mathcal{E}_j$ and our mass formula therefore can be written as 
\begin{align*}
    \mathfrak{m}(M,g)=\frac{1}{3\pi}\sum_j\mathrm{vol} (\mathcal{E}_j)+\frac{1}{12\pi^2}\int_M \Big(s+\frac{1}{4}|\nabla^{\mathrm{LC}}J|^2\Big)\, d\mathrm{v}_g
\end{align*}
If the $D_i$ are the various spherical components of the various $E_j$, and if $n_i$ is the multiplicity with which a given $D_i$ occurs in this way, we can then rewrite this as
\begin{align*}
    \mathfrak{m}(M,g)=\frac{1}{3\pi}\sum_i n_i\,\mathrm{vol}(D_i)+\frac{1}{12\pi^2}\int_M \Big(s+\frac{1}{4}|\nabla^{\mathrm{LC}}J|^2\Big)\, d\mathrm{v}_g
\end{align*}
Since, $s\geq 0$, we get the Penrose-type inequality
\begin{align*}
    \mathfrak{m}(M,g)\geq \frac{1}{3\pi}\sum_i n_i \,\mathrm{vol}(D_i)
\end{align*}
The equality case happens when $s\equiv 0,\nabla^{\mathrm{LC}}J\equiv 0$, i.e., $(M,g,J)$ is scalar-flat K\"ahler.
\end{proof}
\begin{proof}[Proof of theorem \ref{PMT}]
    Theorem \ref{Penrose} implies that $\mathfrak{m}(M,g)\geq 0$. If the mass is zero, $\nabla^{\mathrm{LC}}J\equiv 0$ is forced. Hence we descend to the K\"ahler case and the theorem follows from its K\"ahler counterpart corollary~3.6 in \cite{LeBrun100}.
\end{proof}

\section{Rigidity results}\label{section 3}
In the compact case, theorem \ref{dim >4} is established in \cite{Sekigawa2}. The order of the ALE metric and the decay property of $|\nabla^{\mathrm{LC}}J|$ mentioned in the statement of theorem \ref{dim >4} is enough to compensate for the non-compactness of the manifold and the same argument carries over in the ALE case considered here. The main ingredient of the proof in \cite{Sekigawa2} is a remarkable \textit{integral formula} found by Sekigawa. We use a version of the integral formula written in \cite{Apostolov,Apostolov2} for our proof.\par
We begin with some definitions. The almost complex structure $J$ splits the space of real $2$-forms into $J$-invariant and $J$-anti-invariant ones. For any real $2$-form, we shall use the super-script $'$ to the sub-bundle of $J$-invariant $2$-forms, while the super-script $''$ stands for the projection to the sub-bundle of $J$-anti-invariant ones. Thus for any $2$-form $\beta\in\Omega^2$,
\begin{align*}
    \beta'(\cdot,\cdot):=\frac{1}{2}\big(\beta(\cdot,\cdot)+\beta(J\cdot,J\cdot)\big)\,\,\mathrm{and}\,\, \beta''(\cdot,\cdot):=\frac{1}{2}\big(\beta(\cdot,\cdot)-\beta(J\cdot,J\cdot)\big)
\end{align*}
We denote $W''$ the component of the curvature operator defined by
\begin{align*}
    W''_{XYZT}:=\frac{1}{8}\Big(&R_{XYZT}-R_{JX\,JY\,ZT}-R_{XY\,JZ\,JT}+R_{JX\,JY\,JZ\,JT}\\ 
    &-R_{X\,JY\,Z\,JT}-R_{JX\,YZ\,JT}-R_{X\,JY\,JZ\,T}+R_{JX\,Y\,JZ\,T}\Big)
\end{align*}
We also define the \textit{twisted} Ricci form to be $\rho^*:=R(\omega)$. The trace of $\rho^*, s^*=2R(\omega,\omega)$ is the $*$-scalar curvature introduced before. Finally we define a $2$-form $\phi$ defined by $\phi(X,Y):=\langle \nabla^{\mathrm{LC}}_{JX}\omega,\nabla^{\mathrm{LC}}_{Y}\omega\rangle$.
\begin{lemma}[Apostolov-Dr\u aghici-Moroianu \cite{Apostolov}, Apostolov-Dr\u aghici \cite{Apostolov2}]
    Let $(M,g,J)$ be an almost K\"ahler Einstein manifold. The following relation holds.
    \begin{align}\label{Apostolov}
     8\delta \langle\rho^*,\nabla^{\mathrm{LC}}\omega\rangle-\Delta |\nabla^{\mathrm{LC}}\omega|^2=8|W''|^2+4|(\rho^*)''|^2+|\big((\nabla^{\mathrm{LC}})^*\nabla^{\mathrm{LC}}\omega\big)'|^2+|\phi|^2+\frac{s}{n}|\nabla^{\mathrm{LC}}\omega|^2
    \end{align}
     $\delta$ denotes the co-differential with respect to $\nabla^{\mathrm{LC}}$.
\end{lemma}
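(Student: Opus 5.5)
We would prove \eqref{Apostolov} as a pointwise Bochner-type identity for the harmonic $2$-form $\omega$, following the scheme of Sekigawa as reorganised in \cite{Apostolov,Apostolov2}.

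\textbf{Step 1: the rough Laplacian of $\omega$.} Since $\omega$ is closed and $|\omega|$ is constant, $\omega$ is also co-closed, hence harmonic. The Weitzenböck formula for $2$-forms then expresses $(\nabla^{\mathrm{LC}})^*\nabla^{\mathrm{LC}}\omega$ through $\mathrm{Ric}$ and $R(\omega)=\rho^*$. In the Einstein case $\mathrm{Ric}=\frac{s}{n}g$, this leaves a linear combination of $\rho^*$ and $\frac{s}{n}\omega$. Taking the inner product with $\omega$ recovers $s^*-s=|\nabla^{\mathrm{LC}}\omega|^2$, which we use as a consistency check on the constants.

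\textbf{Step 2: the Bochner identity.} We apply
\[
-\tfrac12\Delta|\nabla\omega|^2=|\nabla\nabla\omega|^2-\langle\nabla^*\nabla(\nabla\omega),\nabla\omega\rangle
\]
and commute $\nabla^*\nabla$ past $\nabla$ using the Ricci identity. This produces three kinds of terms:
\begin{itemize}
\item $\langle\nabla(\nabla^*\nabla\omega),\nabla\omega\rangle$, which by Step 1 becomes $\langle\nabla\rho^*,\nabla\omega\rangle$ plus lower-order terms;
\item Ricci-curvature terms, which the Einstein condition collapses to multiples of $\frac{s}{n}|\nabla\omega|^2$;
\item full-curvature terms of the form $\langle R\ast\nabla\omega,\nabla\omega\rangle$.
\end{itemize}
For the first kind we write
\[
\langle\nabla\rho^*,\nabla\omega\rangle=-\delta\langle\rho^*,\nabla\omega\rangle+\langle\rho^*,\nabla^*\nabla\omega\rangle .
\]
The divergence piece gives the term $8\delta\langle\rho^*,\nabla^{\mathrm{LC}}\omega\rangle$ on the left of \eqref{Apostolov}. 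The second piece, after substituting Step 1, is quadratic in $\rho^*$, and its $J$-anti-invariant part yields $4|(\rho^*)''|^2$. We use the second Bianchi identity together with $\delta R=0$ (a consequence of the Einstein condition) wherever $\nabla\rho^*$ has to be traded for contractions of $\nabla R$.

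\textbf{Step 3: algebraic decomposition.} This is where we expect the real work. We use the almost Kähler constraints:
\begin{itemize}
\item $\nabla_X\omega\in[\![\Lambda^{2,0}]\!]$ and $\nabla_{JX}\omega=-J\nabla_X\omega$ (the Gray condition);
\item the resulting symmetries of $\nabla^2\omega$, whose anti-symmetric part in the first two slots is $R(X,Y)\omega$.
\end{itemize}
With these we split $\nabla^2\omega$ into $U(m)$-irreducible pieces. The expected outcome is:
\begin{itemize}
\item the $J$-invariant trace part gives $|(\nabla^*\nabla\omega)'|^2$;
\item the part quadratic in $\nabla\omega$ gives $|\phi|^2$;
\item the skew part, via $R(X,Y)\omega$, is controlled by $W''$ and $(\rho^*)''$, producing $8|W''|^2$ once it is combined with the full-curvature terms of Step 2.
\end{itemize}

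\textbf{Main obstacle.} The main difficulty is the bookkeeping in Step 3: finding exactly which combination of $|\nabla\nabla\omega|^2$ and the curvature terms squares up into $8|W''|^2+4|(\rho^*)''|^2+|\phi|^2$ with no leftover cross terms. To organise it, we would first work in dimension $4$ with the Penrose spinor formalism, where $[\![\Lambda^{2,0}]\!]$ has rank $2$ and $W''$ lies in $W_+$; this should fix the constants. We would then verify the general-$n$ identity in a unitary frame $\{e_k,Je_k\}$, using the $U(m)$-decomposition of the curvature tensor as in \cite{Apostolov}. As a final sanity check, every term in \eqref{Apostolov} vanishes when $\nabla^{\mathrm{LC}}\omega=0$.
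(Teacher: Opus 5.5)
The paper gives no derivation of this identity; it quotes it from \cite{Apostolov2,Apostolov}. Your outline has a genuine gap in Steps 2--3.

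\textbf{What Step 2 actually gives.} Put the constants in. For an almost K\"ahler Einstein metric one has $\nabla^*\nabla\omega=2\rho^*-\frac{2s}{n}\omega$; this normalisation is what yields $\langle\nabla^*\nabla\omega,\omega\rangle=s^*-s$ with $s^*=2R(\omega,\omega)$. Since $\delta R=0$, the commutator $[\nabla^*\nabla,\nabla]\omega$ is algebraic in $R$ and $\nabla\omega$. Your Bochner formula then reads
\[
-\Delta|\nabla\omega|^2-4\,\delta\langle\rho^*,\nabla\omega\rangle=2|\nabla\nabla\omega|^2+(\text{terms algebraic in } R \text{ and } \nabla\omega).
\]
So the divergence enters with coefficient $-4$, not $+8$, and the term $2|\nabla\nabla\omega|^2$ survives.

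\textbf{Why Step 3 cannot absorb it.} Every term on the right-hand side of the lemma is algebraic in $R$ and $\nabla\omega$; for Einstein $g$, $(\nabla^*\nabla\omega)'=2(\rho^*)'-\frac{2s}{n}\omega$ is curvature. The skew part of $\nabla^2\omega$ is $R(\cdot,\cdot)\omega$, its trace is $\nabla^*\nabla\omega$, and its $\omega$-component is quadratic in $\nabla\omega$. But $\nabla^2\omega$ also has a symmetric trace-free component that is genuine second-order data of $\omega$, not fixed algebraically by $R$ and $\nabla\omega$. No $U(m)$-bookkeeping turns $|\nabla\nabla\omega|^2$ into $8|W''|^2+4|(\rho^*)''|^2+|(\nabla^*\nabla\omega)'|^2+|\phi|^2$. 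To close your route you would need the differential identity that $2|\nabla\nabla\omega|^2+12\,\delta\langle\rho^*,\nabla\omega\rangle$ is algebraic in $R$ and $\nabla\omega$, and the proposal never supplies it.

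\textbf{The missing idea.} Differentiate the curvature side, not $\nabla\omega$. Since $s$ is constant, $\Delta|\nabla\omega|^2=\Delta s^*=2\Delta\langle R(\omega),\omega\rangle$. Expand this using $\nabla\rho^*=(\nabla R)(\omega)+R(\nabla\omega)$ and your divergence formula $\langle\nabla\rho^*,\nabla\omega\rangle=-\delta\langle\rho^*,\nabla\omega\rangle+\langle\rho^*,\nabla^*\nabla\omega\rangle$. This gives
\[
\Delta s^*-8\,\delta\langle\rho^*,\nabla\omega\rangle=2\langle(\nabla^*\nabla R)(\omega),\omega\rangle-4\langle\rho^*,\nabla^*\nabla\omega\rangle+4\sum_i R(\nabla_{e_i}\omega,\nabla_{e_i}\omega).
\]
Here the coefficient $8$ appears by itself and $\nabla^2\omega$ never enters. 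The second Bianchi identity with $\delta R=0$ makes $\nabla^*\nabla R$ quadratic in $R$; this is where your Bianchi remark really belongs.

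Only at this point is the remaining work purely algebraic. Decompose $R$ on $[\![\Lambda^{2,0}]\!]$ into its $\mathcal{J}$-anti-linear part $W''$ and the complementary part. The almost K\"ahler (Gray-type) curvature identities express that complementary part through $s$ and $\nabla\omega$, and this is where $|\phi|^2$ comes from. Then use $\nabla^*\nabla\omega=2\rho^*-\frac{2s}{n}\omega$ to convert $|\rho^*|^2$ and $\langle\rho^*,\nabla^*\nabla\omega\rangle$ into $4|(\rho^*)''|^2+|(\nabla^*\nabla\omega)'|^2$ plus scalar-curvature terms. Your Bochner identity is correct, but it is a different identity; its difference with this one is exactly the relation your route is missing.
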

One can find the formula stated in the lemma in this exact form in proposition 2 in \cite{Apostolov2} or in proposition 1 in \cite{Apostolov}.
\begin{proof}[Proof of theorem \ref{dim >4}] We have
\begin{align*}
    |\nabla^{\mathrm{LC}} \omega|^2=\frac{1}{2}|\nabla^{\mathrm{LC}} J|^2 = s^* - s=2R(\omega,\omega)-s
\end{align*}
Therefore as $r\longrightarrow\infty$, we have
\begin{align*}
    |W''|&=O\big(r^{-(\tau+2)}\big)\\
    |(\rho^*)''|&=O\big(r^{-(\tau+2)}\big)\\
    |\big((\nabla^{\mathrm{LC}})^*\nabla^{\mathrm{LC}}\omega\big)'|&=O\big(|R|)=O\big(r^{-(\tau+2)}\big)\\
    |\nabla^{\mathrm{LC}}\omega|^2&=O\big(|R|)=O\big(r^{-(\tau+2)}\big)\\
    |\phi|&=O\big(|\nabla^{\mathrm{LC}}\omega|^2)=O\big(r^{-(\tau+2)}\big)\\
    s&=O\big(r^{-(\tau+2)}\big)
\end{align*}
Since $\tau>\frac{2}{3}(n-4)$, we have $2\tau+4>n$. Hence the quantity in the right hand side of the equality in equation \eqref{Apostolov} is in $L^1$. Moreover as $r\longrightarrow\infty$, we also have 
\begin{align*}
    \langle\rho^*,\nabla^{\mathrm{LC}}\omega\rangle&=O\big(|R||\nabla^{\mathrm{LC}}\omega|\big)=O\big(r^{-(\frac{3\tau}{2}+3)}\big)\\
    d|\nabla^{\mathrm{LC}}\omega|^2&=O\big(|(\nabla^{\mathrm{LC}})^2\omega||\nabla^{\mathrm{LC}}\omega|\big)=O\big(r^{-(\eta+\frac{\tau}{2}+1)}\big)
\end{align*}
Since $\eta\geq\frac{2}{3}(n-1)$ and $\tau>\frac{2}{3}(n-4)$, we have $(\eta+\frac{\tau}{2}+1),(\frac{3\tau}{2}+3)>n-1$. Thereafter the following boundary integral vanish.
\begin{align*}
    \int_M\big(8\delta \langle\rho^*,\nabla^{\mathrm{LC}}\omega\rangle-\Delta |\nabla^{\mathrm{LC}}\omega|^2\big) d\mathrm{v}_g=\lim_{r\rightarrow \infty}\int_{S_r/\Gamma}\big(8 \langle\rho^*,\nabla^{\mathrm{LC}}\omega\rangle-d |\nabla^{\mathrm{LC}}\omega|^2\big) d\mathrm{v}_{S_r/\Gamma}=0
\end{align*}
Now integrating \eqref{Apostolov} over $M$ we obtain the theorem.
\end{proof}
Next we discuss the proof of theorem \ref{theorem 2}. The proof is essentially due to LeBrun \cite{LeBrun}. We compensate the non-compactness of the manifold with suitable decay properties of the curvature. Below we provide a brief outline of the proof, emphasizing where the decay assumptions enter. Before we delve into the details, we state a lemma gathering all the algebraic facts needed to prove the theorem. \par 
    \begin{lemma}[LeBrun \cite{LeBrun}]
        On an almost K\"ahler $4$-manifold $(M,g,J)$, we have the following identities.
        \begin{align}
            \frac{1}{2}|\nabla^{\mathrm{LC}}\omega|^2=W_+(\omega,\omega)-\frac{s}{3}\label{400}\\
            \big\langle W_+,(\nabla^{\mathrm{LC}})^*\nabla^{\mathrm{LC}}(\omega\otimes\omega)\big\rangle d\mathrm{v}_g=\big(W_+(\omega,\omega)\big)^2+4|W_+(\omega)|^2-sW_+(\omega,\omega)d\mathrm{v}_g\label{401}\\ 
            4|W_+|^2-4|W_+(\omega)|^2+\frac{1}{2}\big(W_+(\omega,\omega)\big)^2\geq 0\label{500}
        \end{align}
        Moreover if $\delta W_+=0$, we have 
        \begin{align}
            0=(\nabla^{\mathrm{LC}})^*\nabla^{\mathrm{LC}} W_++\frac{s}{2}W_+-6W_+\circ W_++2|W_+|^2\mathrm{Id}\label{402}
        \end{align}
    \end{lemma}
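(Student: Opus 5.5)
The plan is to prove everything pointwise, using the splitting $\Lambda^+=\mathbb{R}\omega\oplus[\![\Lambda^{2,0}]\!]$ on an almost K\"ahler $4$-manifold. Here $\omega$ is self-dual of constant norm $|\omega|^2=2$, and $W_+$ is a trace-free symmetric endomorphism of $\Lambda^+$. The identities split into two groups: (400), (401) and (402) come from Weitzenb\"ock formulas, while (500) is pure linear algebra on a $3$-dimensional space.

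\textbf{Identity (400).} Since $\omega$ is closed and self-dual, it is also coclosed, hence harmonic. I will apply the Weitzenb\"ock formula for self-dual $2$-forms,
\begin{align*}
(d+d^*)^2=(\nabla^{\mathrm{LC}})^*\nabla^{\mathrm{LC}}-2W_++\frac{s}{3}\quad\text{on }\Lambda^+ .
\end{align*}
This gives $(\nabla^{\mathrm{LC}})^*\nabla^{\mathrm{LC}}\omega=2W_+(\omega)-\frac{s}{3}\omega$. Pairing with $\omega$ and using $\langle\nabla^*\nabla\omega,\omega\rangle=|\nabla\omega|^2+\frac12\Delta|\omega|^2=|\nabla\omega|^2$, I get $|\nabla^{\mathrm{LC}}\omega|^2=2W_+(\omega,\omega)-\frac{2s}{3}$, which is (400).

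\textbf{Identity (401).} I expand the rough Laplacian of the product:
\begin{align*}
\nabla^*\nabla(\omega\otimes\omega)=(\nabla^*\nabla\omega)\otimes\omega+\omega\otimes\nabla^*\nabla\omega-2\sum_i\nabla_i\omega\otimes\nabla_i\omega .
\end{align*}
Pairing the first two terms with $W_+$ and inserting the formula for $\nabla^*\nabla\omega$ gives $8|W_+(\omega)|^2$-type and $sW_+(\omega,\omega)$-type terms.

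For the last term, note that $\nabla\omega$ takes values in $\Lambda^1\otimes[\![\Lambda^{2,0}]\!]$. Moreover, the almost K\"ahler condition ($d\omega=0$) forces $\nabla_{JX}\omega=-J\circ\nabla_X\omega$ in the sense that $J$ rotates the two-dimensional space $[\![\Lambda^{2,0}]\!]$. Consequently $\sum_i\nabla_i\omega\otimes\nabla_i\omega$ equals $\frac{1}{2}|\nabla\omega|^2$ times the orthogonal projection onto $[\![\Lambda^{2,0}]\!]$, suitably normalised. Because $W_+$ is trace-free, its trace on $[\![\Lambda^{2,0}]\!]$ is $-\frac12W_+(\omega,\omega)$. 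This yields a term proportional to $|\nabla\omega|^2W_+(\omega,\omega)$, into which I substitute (400). Collecting terms should produce $(W_+(\omega,\omega))^2+4|W_+(\omega)|^2-sW_+(\omega,\omega)$.

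The main obstacle is this step: the representation-theoretic claim about $\sum_i\nabla_i\omega\otimes\nabla_i\omega$ and the bookkeeping of the normalisation constants. I would first verify the claim in an adapted frame $e_1$, $e_2=Je_1$, $e_3$, $e_4=Je_3$, with basis $\omega,\phi,\psi$ of $\Lambda^+$.

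\textbf{Inequality (500).} I write $W_+$ in the orthogonal basis $\{\omega/\sqrt2,\phi,\psi\}$ of $\Lambda^+$. Set $a=\frac12W_+(\omega,\omega)$, let $b$ be the off-diagonal vector $(W_+(\omega/\sqrt2,\phi),W_+(\omega/\sqrt2,\psi))$, and let $C$ be the $2\times 2$ block on $[\![\Lambda^{2,0}]\!]$, whose trace is $-a$. Then
\begin{align*}
|W_+|^2=a^2+2|b|^2+|C|^2,\qquad |C|^2\ge \tfrac{a^2}{2},\qquad |W_+(\omega)|^2=2(a^2+|b|^2).
\end{align*}
Substituting these, (500) reduces to a sum of manifestly nonnegative terms, up to checking the constants.

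\textbf{Identity (402).} By the contracted second Bianchi identity, $\delta W_+=0$ means that $W_+$, viewed as a $\Lambda^+$-valued $2$-form, is harmonic. I then apply the Weitzenb\"ock formula for $\Lambda^+$-valued self-dual $2$-forms, as in Derdzi\'nski and Bourguignon. The curvature term acts through $W_+$ on both tensor factors, and after projecting to the trace-free symmetric part, $-6W_+\circ W_++2|W_+|^2\mathrm{Id}$ appears as the trace-free part of $W_+^2$. This gives (402).
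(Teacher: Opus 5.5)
Your proposal is correct and follows essentially the route of \S 2 of LeBrun's paper, which is all the paper itself offers as a proof. The pieces match: the Weitzenb\"ock formula for the harmonic self-dual form $\omega$ gives \eqref{400}; the almost-K\"ahler relation $\nabla_{JX}\omega=\mathcal{J}\nabla_X\omega$, with $\mathcal{J}$ a rotation of $\omega^{\perp}\subset\Lambda^+$, gives $\sum_e\nabla_e\omega\otimes\nabla_e\omega=\frac12|\nabla\omega|^2\,\Pi_{\omega^\perp}$ and hence \eqref{401}; \eqref{500} reduces in your block decomposition to $4|C|^2-2a^2\geq 0$; and \eqref{402} is Derdzi\'nski's Weitzenb\"ock formula for $W_+$.

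Two small corrections. First, the first two terms of $\nabla^*\nabla(\omega\otimes\omega)$ contribute $4|W_+(\omega)|^2-\frac{2s}{3}W_+(\omega,\omega)$, not an $8|W_+(\omega)|^2$ term; adding $\frac12|\nabla\omega|^2W_+(\omega,\omega)=(W_+(\omega,\omega))^2-\frac{s}{3}W_+(\omega,\omega)$ from the last term gives exactly \eqref{401}. Second, the harmonicity of $W_+$ as a $\Lambda^+$-valued $2$-form comes from self-duality, since $\delta^\nabla=-*d^\nabla$ on self-dual bundle-valued $2$-forms, not from the Bianchi identity.
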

    \begin{proof}
        See §~2 of \cite{LeBrun}.
    \end{proof}
\begin{proof}[Proof of theorem \ref{theorem 2}]
    Notice as $r\longrightarrow\infty$,
    \begin{align*}
        |W_+|=O \big(r^{-(2+\tau)}\big)\\
        |\nabla^{\mathrm{LC}}W_+|=O \big(r^{-(3+\tau)}\big)\\
        s=O \big(r^{-(2+\tau)}\big)\\ 
        |\nabla^{\mathrm{LC}}\omega|^2=O \big(r^{-(2+\tau)}\big)
    \end{align*}
    We have used equation \eqref{400} for the asymptotics of $\nabla^{\mathrm{LC}}\omega$. Notice since $\tau>0$, all the terms $|W_+|^2, sW_+, |\nabla^{\mathrm{LC}}\omega|^4, s|\nabla^{\mathrm{LC}}\omega|^2,s^2\in L^1$.\par 
    Equation \eqref{402} gives us
    \begin{align}\label{eq 403}
        0=\int_M \big\langle (\nabla^{\mathrm{LC}})^*\nabla^{\mathrm{LC}} W_++\frac{s}{2}W_+-6W_+\circ W_++2|W_+|^2\mathrm{Id},\omega\otimes\omega\big\rangle d\mathrm{v}_g
    \end{align}
    We focus on the first term of the integrand. Integrating by parts we get 
    \begin{align*}
        \int_M \big\langle (\nabla^{\mathrm{LC}})^*\nabla^{\mathrm{LC}} W_+,\omega\otimes\omega\big\rangle d\mathrm{v}_g=\int_M \big\langle \nabla^{\mathrm{LC}} W_+,\nabla^{\mathrm{LC}}(\omega\otimes\omega)\big\rangle d\mathrm{v}_g+\text{ a boundary term}
    \end{align*}
    As $r\longrightarrow\infty$,
    \begin{align*}
    |\nabla^{\mathrm{LC}} W_+||\omega\otimes\omega|=O (r^{-(3+\tau)})
    \end{align*}
    Since $\tau>0,(3+\tau)>3$, the boundary term vanishes and we have 
    \begin{align*}
        \int_M \big\langle (\nabla^{\mathrm{LC}})^*\nabla^{\mathrm{LC}} W_+,\omega\otimes\omega\big\rangle d\mathrm{v}_g=\int_M \big\langle \nabla^{\mathrm{LC}} W_+,\nabla^{\mathrm{LC}}(\omega\otimes\omega)\big\rangle d\mathrm{v}_g
    \end{align*}
    Integrating by parts again we get
    \begin{align*}
        \int_M \big\langle \nabla^{\mathrm{LC}} W_+,\nabla^{\mathrm{LC}}(\omega\otimes\omega)\big\rangle d\mathrm{v}_g= \int_M \big\langle W_+,(\nabla^{\mathrm{LC}})^*\nabla^{\mathrm{LC}}(\omega\otimes\omega)\big\rangle d\mathrm{v}_g+\text{ a boundary term}
    \end{align*}
    Again we see that as $r\longrightarrow\infty$,
    \begin{align*}
        |W_+||\nabla^{\mathrm{LC}}(\omega\otimes\omega)|=O (|W_+||\nabla^{\mathrm{LC}}\omega|)=O(r^{-(3+\frac{3\tau}{2})})
    \end{align*}
    Since $\tau>0,3+\frac{3\tau}{2}>0$, the boundary term vanishes again. Therefore equation \eqref{eq 403} can be written as
    \begin{align}\label{eq 404}
        0=\int_M \bigg(\big\langle W_+,(\nabla^{\mathrm{LC}})^*\nabla^{\mathrm{LC}}(\omega\otimes\omega)\big\rangle +\frac{s}{2}W_+(\omega,\omega)-6|W_+(\omega)|^2+2|W_+|^2|\omega|^2\bigg) d\mathrm{v}_g
    \end{align}
    Substituting the identity \eqref{401} for $\big\langle W_+,(\nabla^{\mathrm{LC}})^*\nabla^{\mathrm{LC}}(\omega\otimes\omega)\big\rangle$ into equation \eqref{eq 404} yields
    \begin{align}\label{eq 501}
        0=\int_M \Big(\big(W_+(\omega,\omega)\big)^2-\frac{s}{2}W_+(\omega,\omega)-2|W_+(\omega)|^2+4|W_+|^2\Big)d\mathrm{v}_g
    \end{align}
    Combining the above equation \eqref{eq 501} with the inequality \eqref{500} we have
    \begin{align*}
        \int_M sW_+(\omega,\omega) d\mathrm{v}_g\geq \int_M \bigg(4|W_+|^2+\frac{3}{2}\big(W_+(\omega,\omega)\big)^2\bigg)d\mathrm{v}_g
    \end{align*}
    which also reads
    \begin{align*}
        \frac{3}{8}\int_M \bigg(\frac{2s}{3}-W_+(\omega,\omega)\bigg)W_+(\omega,\omega)d\mathrm{v}_g\geq \int_M |W_+|^2 d\mathrm{v}_g
    \end{align*}
    Substituting $W_+(\omega,\omega)=\frac{1}{2}|\nabla^{\mathrm{LC}}\omega|^2+\frac{s}{3}$ from \eqref{400} into the above equation we get
    \begin{align*}
        \frac{3}{8}\int_M \bigg(\frac{s}{3}-\frac{1}{2}|\nabla^{\mathrm{LC}}\omega|^2\bigg)\bigg(\frac{s}{3}+\frac{1}{2}|\nabla^{\mathrm{LC}}\omega|^2\bigg) d\mathrm{v}_g\geq \int_M |W_+|^2 d\mathrm{v}_g
    \end{align*}
    An algebraic simplification yields 
    \begin{align}\label{ineq 1}
        \int_M\frac{s^2}{24} d\mathrm{v}_g-\frac{3}{32}\int_M|\nabla^{\mathrm{LC}}\omega|^4 d\mathrm{v}_g\geq \int_M |W_+|^2 d\mathrm{v}_g
    \end{align}
    On the other hand using the fact that $W_+$ is trace-free and $|\omega|^2\equiv 2$, we have 
    \begin{align*}
        2\sqrt{\frac{2}{3}}|W_+|\geq W_+(\omega,\omega)
    \end{align*}
    Combining it with the identity \eqref{400} yields
    \begin{align*}
        |W_+|\geq \frac{s}{2\sqrt{6}}
    \end{align*}
    Since $s\geq 0$, squaring and then integrating both sides we get
    \begin{align}\label{ineq 2}
        \int_M |W_+|^2 d\mathrm{v}_g\geq \int _M \frac{s^2}{24}d\mathrm{v}_g
    \end{align}
    \eqref{ineq 1} and \eqref{ineq 2} together proves that $\nabla^{\mathrm{LC}}J\equiv 0$. 
\end{proof}
\begin{remark}
    To prove the analogous statements for ALF, one needs to make sure again that under certain decay assumptions, the boundary terms do not contribute whenever we do any integration by parts. An $n+1$-dimensional ALF manifold of order $\tau>0$ has the following asymptotics for the curvature tensor: $|R|=O(r^{-(2+\tau)})$ as $r\rightarrow\infty$. This is exactly like the ALE case except now $r$ represents the radius in $\mathbb R^n$ and not in $\mathbb R^{n+1}$ as in the ALE case; but that is okay, because the volume of a boundary hypersurface in an $n+1$-dimensional ALF space grows as $r^{n-1}$, one power less than the $n+1$-dimensional ALE case. Thereafter, all the same calculations go through the same. 
\end{remark}
\printbibliography[
heading=bibintoc,
title={Bibliography}
]

%
%
%


%

\Addresses
\end{document}